\newcommand{\mumu}{{\boldsymbol{\mu}}}
\newcommand{\B}{{\mathbf{B}}}
\newcommand{\QQ}{\mathbb{Q}}
\newcommand{\ZZ}{\mathbb{Z}}
\newcommand{\PP}{\mathbb{P}}
\newcommand{\OOO}{{\mathscr{O}}} 
\newcommand{\MMM}{{\mathscr{M}}} 
\newcommand{\BBB}{{\mathscr{B}}}
\newcommand{\fracf}[2]{#1/#2}
\newcommand{\h}{\operatorname{h}}
\newcommand{\p}{\operatorname{p}}
\newcommand{\g}{\operatorname{g}} 
\newcommand{\qq}{\mathbin{\sim_{\scriptscriptstyle{\mathbb{Q}}} } }
\newcommand{\qW}{\operatorname{q}_{\operatorname{W}}}
\newcommand{\qQ}{\operatorname{q_{\QQ}}}
\newcommand{\Sing}{\operatorname{Sing}}
\newcommand{\Bs}{\operatorname{Bs}}
\newcommand{\Pic}{\operatorname{Pic}}
\newcommand{\Cl}{\operatorname{Cl}}
\newcommand{\Clt}[1]{\operatorname{Cl}(#1)_{\mathrm {t}}}
\newcommand{\ct}{\operatorname{ct}}
\newcommand{\type}[1]{$\mathrm{#1}$} 
\newcommand{\xref}[1]{{\rm~\ref{#1}}}
\theoremstyle{definition}
\newtheorem{step}{Step}
\theoremstyle{plain}
\newtheorem{theorem}[subsection]{Theorem}
\newtheorem{lemma}[subsection]{Lemma}
\newtheorem{proposition}[subsection]{Proposition}
\newtheorem{stheorem}[equation]{Theorem}
\newtheorem{corollary}[subsection]{Corollary}
\newtheorem{scorollary}[equation]{Corollary}
\newtheorem*{claim*}{Claim}
\newtheorem{sclaim}[equation]{Claim}
\newtheorem{slemma}[equation]{Lemma}
\newtheorem{sproposition}[equation]{Proposition}
\newtheorem{sproposition-definition}[equation]{Proposition-Definition}
\theoremstyle{definition}
\newtheorem{setup}[subsection]{Set-up}
\newtheorem*{definition*}{Definition}
\newtheorem{sdefinition}[equation]{Definition}
\newtheorem{example-remark}[subsection]{Remark-Example}
\newtheorem{subexample-remark}[equation]{Remark-Example}
\newtheorem*{notation*}{Notation}
\newtheorem{sremark}[equation]{Remark} 
\newtheorem{sconvention}[equation]{Convention}
\newcounter{NN}\numberwithin{NN}{section}
\renewcommand{\theNN}{\arabic{NN}${}^o$}
\def\nr{\refstepcounter{NN}{\theNN}}%
\renewcommand{\theenumi}{\rm (\arabic{enumi})}
\renewcommand\labelenumi{\rm (\arabic{enumi})}
\renewcommand\labelenumi{\rm (\roman{enumi})}
\renewcommand\theenumi{\rm (\roman{enumi})}
\begin{document}
\title{On the birational geometry of $\QQ$-Fano threefolds \\ of large Fano index, I} 

\address{ 
Steklov Mathematical Institute of Russian Academy of Sciences, Moscow, Russian Federation
} 
\email{prokhoro@mi-ras.ru}

\author{Yuri~Prokhorov}
\thanks{This work is supported by the Russian Science Foundation under grant no. 23-11-00033, 
\url{https://rscf.ru/project/23-11-00033/}}

\begin{abstract}
We investigate the rationality problem for $\QQ$-Fano threefolds of Fano index $\ge 2$. 
\end{abstract}

\maketitle

\section{Introduction}

A three-dimensional algebraic projective variety $X$ 
is called \textit{$\QQ$-Fano threefold} if it has only terminal 
$\QQ$-factorial singularities, $\Pic(X)\simeq \ZZ$,
and its anticanonical divisor $-K_X$ is ample.
The class of these varieties is important in birational geometry because it is one of the possible outputs 
of the Minimal Model Program in dimension $3$. 
It is known that $\QQ$-Fano threefolds are bounded, i.e. they lie in a finite number of algebraic 
families. Moreover, the methods of \cite{Kawamata:bF} allow to obtain a (huge) list 
of numerical invariants of $\QQ$-Fano threefolds \cite{GRD}. 
At the moment there is no classification, but there are a lot of partial results. 

This work is a sequel to our previous papers \cite{P:2019:rat:Q-Fano}, \cite{P:fano-conic}. 
We are interested in the birational geometry of $\QQ$-Fano threefolds rather than 
biregular one.
Mainly, we will discuss the rationality question. 

The \textit{$\QQ$-Fano index} of a $\QQ$-Fano threefold $X$ is the maximal integer $\qQ(X)$ that divides the
canonical class $K_X$ in the Weil divisor class group modulo torsion (see~\eqref{eq:deq-q}).
A Weil divisor $A$ such that $-K_X\qq \qQ(X) A$ we call the \textit{fundamental divisor} 
and denote it by $A_X$. It turns out that the classification of $\QQ$-Fano threefolds of large index $\qQ(X)$
is much simpler (see \cite{Suzuki-2004} and \cite{P:2010:QFano}). Moreover, $\QQ$-Fano threefolds of large index $\qQ(X)$
are expected to be rational: 

\begin{theorem}[{\cite{P:2019:rat:Q-Fano}}]
\label{thm:q8}
Let $X$ be a $\QQ$-Fano threefold. 
If $\qQ\ge 8$, then $X$ is rational.
\end{theorem}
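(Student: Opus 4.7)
The plan is to reduce the theorem to a finite case analysis by invoking the existing numerical classification of $\QQ$-Fano threefolds with large $\qQ$-index. Combining Suzuki's bounds from \cite{Suzuki-2004} with the refinements in \cite{P:2010:QFano}, the numerical invariants $(\qQ(X),\ A_X^3,\ h^0(A_X),\ \text{singularity basket})$ for $\qQ\ge 8$ form a short explicit list, with $\qQ(X)$ taking only a handful of values (up to some maximum, known to be small, e.g.\ at most $19$). For each entry of that list one knows the dimension of $|A_X|$, the singularity basket, and thus enough data to examine the linear systems $|A_X|$ and $|2A_X|$ in detail.

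For every numerical type on the list, the plan is to exhibit a rational model by constructing an explicit birational map. In the easiest cases, when $|A_X|$ or $|2A_X|$ already defines a birational map onto a variety of small degree in projective space, rationality is essentially immediate: $X$ turns out to be birational to a (weighted) projective space, to a rational Mori fiber space such as a conic bundle over $\PP^2$, or to a del Pezzo fibration with rational total space. In the remaining cases, I would project from a singular point of highest index or from a smooth rational curve of low $A_X$-degree, study the resulting Sarkisov link, and iterate until the output acquires an obvious rational structure.

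The main obstacle, as usual in this kind of program, is the handling of the boundary cases at the lower end of the range, where $\qQ(X)=8$ or $9$ and the fundamental linear system is not yet large enough to directly give a birational map to projective space. There one must analyze the base locus of $|A_X|$ near each terminal cyclic quotient singularity, verify that the constructed map is birational rather than only rational, and track positivity through the Sarkisov link. Typically this requires a careful local computation at every terminal point of index greater than one, together with a general-position argument for the center of projection, to guarantee that the output is either a smooth rational Fano variety or a standard conic bundle/del Pezzo fibration whose rationality is already known. Once every numerical type has been reduced to such a model, the rationality of $X$ follows.
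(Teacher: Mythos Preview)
The present paper does not contain a proof of this theorem: it is simply quoted from the author's earlier work \cite{P:2019:rat:Q-Fano} and used as a black box throughout. There is therefore no ``paper's own proof'' here to compare against. Your outline is broadly consistent with the strategy of that earlier reference---finite numerical classification via \cite{Suzuki-2004} and \cite{P:2010:QFano}, followed by an explicit Sarkisov link for each numerical type---but what you have written is a plan rather than a proof: the actual content lies entirely in carrying out the link computations case by case, controlling the invariants $(\hat q, s_k, \alpha, \beta_k, e)$ via relations like \eqref{eq:main}, and checking that each output is rational. None of that is present in your proposal, and in the cited paper those computations occupy the bulk of the work.

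One substantive caution: your phrase ``when $|A_X|$ or $|2A_X|$ already defines a birational map onto a variety of small degree'' suggests you expect these linear systems to be large. For several of the relevant baskets with $\qQ(X)\in\{8,9,11,13\}$ one has $\dim|A_X|\le 1$, so the direct ``birational onto projective space'' shortcut rarely applies; the Sarkisov link construction (choosing $\MMM$ carefully and blowing up a high-index point) is the mechanism that actually does the work, not an afterthought for a few residual cases.
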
 

On the other hand, there are nonrational $\QQ$-Fano threefolds of large 
index.
For example, T.~Okada \cite{Okada2019} showed that there are $\QQ$-Fano 
threefold hypersurfaces of index 
$2$, $3$, $5$, and~$7$ that
are not rational. 

The following  invariant will be very important in the sequel:
\begin{equation*}
\label{def:pn}
\p_n(X):=\max \left\{ \h^0(X,\OOO_X(D)) \mid D\qq nA_X\right\}. 
\end{equation*} 
If the Weil divisor class group $\Cl(X)$ is torsion free, then the above definition becomes simpler:
\[
\p_n(X)=\h^0(X,\OOO_X(nA_X)).
\] 

Our main result is as follows:

\begin{theorem}
Let $X$ be a $\QQ$-Fano threefold with $\qQ(X)\ge 2$.
If one of the following conditions hold, then $X$ is rational
\begin{enumerate}
\item
$\p_1(X)\ge4$,
\item
$\qQ(X)\ge 3$ and $\p_1(X)\ge3$,
\item
$\qQ(X)\ge 4$ and $\p_1(X)\ge2$,

\item
$\qQ(X)\ge 5$ and $\p_2(X)\ge2$ and $X$ is not of type \cite[\# 41422]{GRD}
\textup(see Proposition~\xref{prop5a}\textup),

\item
$\qQ(X)\ge 6$ and $\p_3(X)\ge2$.
\end{enumerate}
\end{theorem}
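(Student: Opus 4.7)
The plan is to prove each of the five items separately, relying on the boundedness of $\QQ$-Fano threefolds and the explicit list of numerical invariants $(\qQ,A_X^3,\p_1,\p_2,\dots)$ compiled in \cite{GRD}. Under each hypothesis only finitely many numerical types are possible, and for each type one either produces a direct rationality construction or eliminates the type on numerical grounds. The recurring tool will be the adjunction formula: a general surface $S\in|A_X|$ (when nonempty) satisfies $K_S=(K_X+S)|_S\qq-(\qQ(X)-1)A_X|_S$, so $S$ is a (possibly singular) del Pezzo surface whose rationality can be controlled by the degree and the singularity type.

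In cases (1)--(3), where $\p_1(X)$ is assumed sufficiently large relative to the index, I would analyse the linear system $|A_X|$ directly. In (1), the rational map $\varphi_{|A_X|}\colon X\dashrightarrow\PP^{\p_1(X)-1}$ targets a projective space of dimension $\ge 3$; by studying its image, the degree of its general fibre, and the base locus, one can build a Sarkisov link from $X$ to a known rational variety (a rational scroll, a quadric bundle, or $\PP^3$). In (2), $\qQ\ge 3$ and $\p_1\ge 3$ force $|A_X|$ to define a map whose image is a surface; combining this with the del Pezzo structure on a general $S\in|A_X|$ produces a rational two-dimensional family, and a Sarkisov link centred in $S$ provides the birational map to an easier model. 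Case (3) is the ``pencil'' version: the pencil $|A_X|$ gives a del Pezzo fibration, which after flops contracts onto a rational surface.

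For cases (4) and (5), the system $|A_X|$ may be trivial, and one must work instead with $|2A_X|$ or $|3A_X|$. Here I would use the numerical classification of $\QQ$-Fano threefolds of large index from \cite{Suzuki-2004} and \cite{P:2010:QFano}, together with Theorem~\ref{thm:q8} (which settles $\qQ\ge 8$), to reduce to a short list of admissible types in each of $\qQ\in\{5,6,7\}$. For each such type I would construct an explicit two-ray Sarkisov link: extract a divisorial component over the base locus of $|2A_X|$ (respectively $|3A_X|$), run the flop/flip chain, and contract onto a rational Mori fibre space (typically a conic bundle over a rational surface or a del Pezzo fibration over $\PP^1$ with rational generic fibre).

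The main obstacle is case (4). The existence of the exceptional family \cite[\#\,41422]{GRD} shows that the pattern genuinely breaks, so one cannot hope for a uniform argument; a separate dedicated analysis is needed, presumably as an independent proposition whose proof must show that that particular numerical type does not admit the Sarkisov link used for the other types (and, symmetrically, must verify that every other admissible type does). Keeping track of the singularities that arise in the intermediate steps of the link --- ensuring terminality is preserved and that the final model is indeed rational --- is where the bulk of the technical work will lie; the rationality of the del Pezzo surface $S\in|A_X|$ (or of the base of the fibration) must also be checked type-by-type using the known classification of singular del Pezzo surfaces.
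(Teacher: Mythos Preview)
Your proposal has the right general shape---Sarkisov links centred on $|nA_X|$ and a case-by-case enumeration from \cite{GRD}---but the specific mechanisms you describe for cases (i)--(iii) would not prove rationality, and the crucial numerical engine is absent.

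First, the direct approach via $|A_X|$ does not work as stated. In your case (iii) you say the pencil $|A_X|$ yields a del Pezzo fibration which ``after flops contracts onto a rational surface''; but a pencil maps to $\PP^1$, not a surface, and a del Pezzo fibration over $\PP^1$ is very far from rational in general (already smooth cubic threefolds admit such structures). Likewise in (i)--(ii), knowing that $\varphi_{|A_X|}$ maps to $\PP^{\ge 3}$ or to a surface, or that a general $S\in|A_X|$ is del Pezzo, gives no rationality without substantial further input. (Also, adjunction for $S$ is delicate: $A_X$ is only a Weil divisor, so $S$ need not be Cartier or even normal a priori.)

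What the paper actually uses, and what your sketch is missing, is the numerical control on the \emph{output} of the link. Writing $K_{\tilde X}=f^*K_X+\alpha E$ and $\tilde\MMM_k=f^*\MMM_k-\beta_k E$, one has the key identity $k\hat q = qs_k+(q\beta_k-k\alpha)e$ together with the bound $\beta_n\ge m\alpha$ coming from the canonical threshold at a point of index $r$ (with $nm\equiv 1\bmod r$). This forces a dichotomy: either $s_1>0$ and then $\hat q>q$, so the output is a $\QQ$-Fano of \emph{strictly larger} index and one proceeds by downward induction on $\qQ$ (using Theorem~\ref{thm:q8} as the base); or $s_1=0$, $\hat q=1$, and $\bar f$ is a genuine fibration whose base is tightly constrained (Lemma~\ref{lemma:hat-q}) by the values $\p_1,\p_2,\p_3$. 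The proofs of (i)--(v) are then an interlocking chain: each case invokes the already-proven higher-index cases to rule out $s_k>0$, and the fibration branch is killed by comparing $\p_k(X)$ with the Hilbert series of the handful of admissible bases. You also omit entirely the treatment of torsion in $\Cl(X)$, which the paper handles separately via global cyclic covers (Propositions~\ref{prop:tor}--\ref{prop:tq5}) before any of the link arguments can begin.
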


We also study birational geometry of $\QQ$-Fano threefolds with 
$\p_1(X)\ge 2$ and $\qQ(X)=2$ or $3$ 
(see Propositions~\ref{prop:3} and 
\ref{prop:2}).

\section{Preliminaries}

\subsection{Notation.}
We employ the following notation.
\begin{itemize}
\item 
$\sim$ and $\qq$ denote the linear and $\QQ$-linear equivalences of divisors, respectively; 
\item 
$\Cl(X)$ denotes the Weil divisor class group of a normal variety $X$;
\item
$\Clt{X}$ is the torsion subgroup of $\Cl(X)$;

\item
$\g(X)$ is the genus of the $\QQ$-Fano threefold $X$, that is, $\g(X):=\h^0(X, \OOO_X(-K_X))-2$;
\item
$\PP(w_1,\dots,w_n)$ is the weighted projective space with weights $w_1,\dots,w_n$; 
coinciding weights we group together as follows $\PP(\underbrace{w_1,\dots,w_1}_k,w_2, \dots )=\PP(w_1^k,w_2,\dots)$;
\item 
$\B(X)$ is the basket of singularities of a terminal threefold $X$ (see \cite{Reid:YPG});
this is a collection of virtual cyclic quotient singularities $\frac{1}{r_P}(1,-1,b_P)$
associated to each actual singular point of $X$. For short, in $\B(X)$ we list only indices of
these virtual points, i.e. $\B(X)=(\{r_P\})$.
\item 
$\mumu_N$ denotes the multiplicative cyclic group of order $N$. If $\mumu_N$ acts on $\mathbb{A}^n$ by
\[
(x_1,\dots, x_n) \longmapsto (\zeta_n^{w_1} x_1,\dots, \zeta_n^{w_n} x_n),
\]
where $\zeta_n$ is a primitive $N$-th root of $1$, then we say that $(w_1,\dots,w_n)$ are the weights of the action an we write 
$\mumu_N(w_1,\dots,w_n)$ to specify the action.
\end{itemize}

\subsection{$\QQ$-Fano threefolds}
For a Fano variety with at worst log terminal singularities we define \textit{Fano-Weil} and \textit{$\QQ$-Fano indices}
as follows:
\begin{equation}
\label{eq:deq-q}
\begin{array}{lll}
\qW(X)&:=&\max \{ q\in\ZZ \mid -K_X\sim q A,\ \text{$A$ is a Weil divisor}\},
\\[5pt]
\qQ(X)&:=&\max \{ q\in \ZZ \mid -K_X\qq q A,\ \text{$A$ is a Weil divisor}\}.
\end{array}
\end{equation} 
The \textit{fundamental divisor} of $X$ is a Weil divisor $A_X$ such that 
\begin{equation}
\label{eq:def-AX}
-K_X\qq \qQ(X) A_X.
\end{equation} 
Note that if $\Clt{X}\neq 0$, then the class of $A_X$
is not uniquely defined modulo linear equivalence. 
However, in the case $\qQ(X)=\qW(X)$ we always accept the following.
\begin{sconvention}
\label{conv:def-AXa}
If $\qQ(X)=\qW(X)$ we take $A_X$
so that 
\begin{equation*}
\label{eq:def-AXa}
-K_X\sim \qW(X)A_X. 
\end{equation*} 
\end{sconvention} 
The \textit{Hilbert series} of a $\mathbb{Q}$-Fano threefold $X$ is the formal power series \cite{Altinok2002}
\[
\h_X(t)= \sum_{m\ge 0} \h^0 (X, mA_X )\cdot t^m.
\]
It is computed by using the orbifold Riemann-Roch formula \cite{Reid:YPG}. If the group $\Cl(X)$ contains an element $T$ of $N$-torsion, we define \emph{$T$-Hilbert series} $\h_X(t,\sigma)\in \ZZ[[t,\sigma]]/(\sigma^N-1)$
as follows:
\[
\h_X(t,\sigma)= \sum_{m\ge 0} \sum_{j=0}^{N-1}\h^0 (X, mA_X+jT )\cdot t^m\sigma^j.
\]
Obviously, the above definition depends on the choice the class of $A_X$ in 
$\Cl(X)$. Typically calculating $\h_X(t)$ or $\h_X(t,\sigma)$ for our purposes we need only 
a few initial terms of the series.

Recall that the \textit{Gorenstein index} of a normal $\QQ$-Gorenstein variety $X$ is a minimal positive integer $r$ 
such that the Weil divisor $rK_X$ is Cartier.
\begin{stheorem}[{\cite{Suzuki-2004},} {\cite{P:2010:QFano}}]
\label{thm:index}
Let $X$ be a Fano threefold with terminal singularities
and let $r$ be its global Gorenstein index.
Then the following assertions hold:
\begin{enumerate}
\item\label{thm:indexa}
$\qW(X)$ divides $\qQ(X)$;
\item\label{thm:indexb}
$\qW(X)=\qQ(X)$ if and only if $r$ and $\qW(X)$ are coprime;
\item\label{thm:indexc}
$rA_X^3$ is an integer;
\item\label{thm:indexd}
$\qW(X),\ \qQ(X)\in \{1,\dots,9, 11, 13,17,19\}$.
\end{enumerate}
\end{stheorem}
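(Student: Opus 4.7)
The plan is to treat the four statements separately, noting that (i)--(iii) should be essentially formal consequences of $\QQ$-factoriality and Reid's orbifold Riemann--Roch, while (iv) is the genuinely hard boundedness statement whose proof will require a substantial case analysis.

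For parts (i) and (ii), I would exploit the fact that $X$ is $\QQ$-factorial with $\Pic(X)$ of rank one, so $\Cl(X)$ also has rank one. Choosing a splitting $\Cl(X)\simeq \ZZ\cdot L\oplus \Clt{X}$ and writing $[-K_X]=nL+\tau$ with $n\ge 1$ and $\tau\in\Clt{X}$, the definition of $\qq$ gives $\qQ(X)=n$ immediately. On the other hand, $\qW(X)$ is the largest $q\in\ZZ$ for which there exists a Weil divisor $A$ with $[-K_X]=q[A]$ in $\Cl(X)$, i.e.\ $n=qm$ and $\tau=q\tau'$ for some $\tau'\in\Clt{X}$; this gives $\qW(X)\mid n=\qQ(X)$, hence (i). For (ii), I would use that $rK_X\in\Pic(X)\hookrightarrow\Cl(X)$, which forces a specific compatibility between $r$ and the torsion component $\tau$ of $[-K_X]$; unwinding this in the splitting above should give the equivalence $\qW=\qQ\Leftrightarrow \gcd(r,\qW)=1$.

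For (iii), the natural approach is orbifold Riemann--Roch. Writing $\chi(\OOO_X(mA_X))$ as a polynomial part (involving $A_X^{3}$, $A_X\cdot c_2$, and $K_X\cdot A_X^{2}$) plus a sum of local corrections at each basket point $\frac{1}{r_P}(1,-1,b_P)$, the singular contributions have denominators dividing $r_P$ and hence dividing $r$. Integrality of $\chi$ together with the integrality of Cartier intersection numbers then forces the denominator of the $A_X^{3}$ coefficient to divide $r$, giving $rA_X^{3}\in\ZZ$.

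Part (iv) is where the real work lies, and I expect this to be the main obstacle. The strategy is to combine (iii) with positivity ($A_X^{3}>0$) and the explicit orbifold Riemann--Roch expression for $\h_X(t)$. For each hypothetical value $q$ of $\qQ(X)$, one runs through all baskets $\B(X)$ permitted by \textrm{(iii)} and by the obvious upper bounds on $\sum \frac{r_P-1}{r_P}$, and checks that for small $m$ the predicted values of $\chi(mA_X)$ must be compatible with $h^0(mA_X)\ge 0$ and with Kodaira--Kawamata--Viehweg vanishing; values of $q$ outside $\{1,\dots,9,11,13,17,19\}$ (such as $q=10,12,14,15,16,18$ and $q\ge 20$) should be ruled out by the absence of any admissible basket. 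Carrying out this enumeration is lengthy and I would ultimately follow the case-by-case analysis of Suzuki and the author's earlier paper rather than trying to reinvent it; the delicate cases are the largest indices $q=17,19$, where a single exceptional basket survives and must be exhibited.
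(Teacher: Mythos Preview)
The paper does not give its own proof of this theorem; it is quoted from \cite{Suzuki-2004} and \cite{P:2010:QFano} and used as a black box throughout, so there is nothing to compare your proposal against in this paper.

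On the proposal itself: the overall shape is reasonable, and your treatment of (i), (iii), (iv) is broadly in line with how these are handled in the cited references (with (iv) indeed being the substantial enumeration). However, there is a genuine gap in your sketch of (ii). The observation that $rK_X$ is Cartier is not the key input; what actually drives (ii) is the structural fact, special to three-dimensional terminal singularities, that the local divisor class group at each singular point $P$ is cyclic of order $r_P$ and is \emph{generated by $K_X$}. From this one sees that a local relation $-K_X\sim qA$ forces $\gcd(q,r_P)=1$, giving the forward implication; and conversely, once one knows that $\Clt{X}$ has exponent dividing $r$ (because $r\cdot\Cl(X)\subset\Pic(X)$ and $\Pic(X)$ is torsion free for a Fano with rational singularities), the hypothesis $\gcd(q,r)=1$ makes multiplication by $q$ an automorphism of $\Clt{X}$, so any $\QQ$-linear relation $-K_X\qq qA$ can be upgraded to a genuine linear equivalence. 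Your phrase ``forces a specific compatibility'' does not capture this, and without the ``local class group generated by $K_X$'' fact the argument does not go through.

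A smaller point: the theorem is stated for arbitrary Fano threefolds with terminal singularities, not only $\QQ$-Fano threefolds, so you should not assume $\QQ$-factoriality or $\operatorname{rk}\Pic(X)=1$. Fortunately your arguments for (i) and (iii) only use that $\Cl(X)$ is finitely generated, so this is easily repaired.
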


\begin{stheorem}[{\cite{CampanaFlenner}}, {\cite{Sano-1996}}]
\label{thm:QFanoF}
Let $X$ be a Fano threefold with terminal singularities.
Assume that there exists a Cartier divisor $H$ on $X$ such that
$H\qq mA_X$, where $m<\qQ(X)$. Then the general member $S\in |H|$ is a smooth del Pezzo surface, the group $\Cl(X)$ is torsion free,
and $(X,H)$ is described by the following table.
The general member $X$ of each family is a $\QQ$-Fano threefold.
\end{stheorem}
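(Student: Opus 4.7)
The plan is to deduce the four conclusions in three layers: adjunction giving the del Pezzo structure, smoothness and torsion-freeness arguments, and a case-by-case numerical classification combined with explicit realizations.

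\emph{Adjunction and smoothness.} Since $H$ is Cartier and $m<\qQ(X)$, adjunction on a smooth $S\in |H|$ yields $-K_S\qq (\qQ(X)-m)A_X|_S$, which is ample, so any smooth $S$ is a del Pezzo surface. To ensure smoothness of a general $S$, the plan is to combine Kawamata's base-point-freeness theorem (applicable since $H-K_X\qq (\qQ(X)+m)A_X$ is ample $\QQ$-Cartier) with Bertini for terminal threefolds: once $|H|$ is shown to be base-point-free, a general $S$ avoids the finite set of non-Gorenstein terminal points (each of which is a closed condition in a nontrivial base-point-free linear system), and at the remaining Gorenstein (cDV) points Bertini yields smoothness. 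This is the content of the Campana--Flenner/Sano smoothness package.

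\emph{Torsion-freeness and classification.} Because a general $S$ is a smooth del Pezzo, it is simply connected, so a Lefschetz-type theorem for the smooth locus $X^\circ$ with ample Cartier section $S$ gives $\pi_1(X^\circ)=1$ and hence $\Clt{X}=0$. With torsion killed, Theorem~\xref{thm:index} restricts the tuple $(\qQ(X),\qW(X),A_X^3,\B(X))$ to a finite list of numerical types. For each candidate the orbifold Riemann--Roch formula determines $\h^0(X,H)$ as well as the numerical type of $(S,-K_S)$; matching against the classification of smooth del Pezzo surfaces isolates exactly the rows in the table.

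\emph{Existence and main obstacle.} For each numerical row, one constructs a concrete model, typically as a weighted complete intersection in a suitable $\PP(w_1,\dots,w_n)$, and verifies $\QQ$-factoriality, terminality, and $\Pic\simeq\ZZ$. The adjunction, Bertini and Lefschetz steps are short and uniform; the main obstacle is the case-by-case realization — showing that every numerical candidate is filled by a genuine family rather than being excluded by a subtle geometric constraint — which requires individual explicit constructions and Hilbert-series verifications for each entry of the table.
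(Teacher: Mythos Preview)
The paper does not prove this statement; it is cited from \cite{CampanaFlenner} and \cite{Sano-1996}, and what follows the table is commentary on rationality, not an argument. So there is no in-paper proof to compare your proposal against.

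Your outline has the right overall shape and broadly matches how the cited sources proceed, but there is a genuine gap in the smoothness step. Kawamata's base-point-free theorem only yields freeness of $|nH|$ for $n\gg 0$, not of $|H|$ itself; writing ``once $|H|$ is shown to be base-point-free'' and then attributing this to the ``Campana--Flenner/Sano smoothness package'' is circular, since establishing precisely this is the substantive content of those references. The actual arguments there are more delicate: one controls $|H|$ via ladders and a careful analysis of the linear system near the non-Gorenstein points, not by a one-line appeal to the base-point-free theorem. Your identification of the ``main obstacle'' is also off: the explicit weighted-complete-intersection realizations for each row are routine once the numerical data are pinned down; the real work lies upstream, in showing that $|H|$ behaves well enough to cut out a smooth $S$.
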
 
\begin{table}[H]
\setcounter{NN}{0}
\renewcommand{\arraystretch}{1.2}
\begin{tabularx}{0.97\textwidth}{|l|l|l|l|X|l|l|c|l|}
\hline
&$\qQ$&$\B(X)$ &$A_X^3$&\centering $X$ &$\g(X)$&\centering Rat? &$m$ & $K_S^2$
\\\hline
\nr \label{thm:QFanoF:1} & $4$ &$\varnothing$&$1$&$\PP^3$ &$33$& R &$1,\, 2,\, 3$ & $9,\, 8,\, 3$
\\\hline
\nr \label{thm:QFanoF:2} & $3$& $\varnothing$ &$2$&$X_2 \subset \PP^4$ & $28$& R &$1,\, 2$ &$8,\, 4$
\\\hline
\nr \label{thm:QFanoF:3} & $2$ &$\varnothing$ &$d$&del Pezzo threefold of degree $d\le 5$ & $4d+1$&& $1$ & $d$
\\\hline
\nr \label{thm:QFanoF:4} & $5$& $(2)$ &$1/2$&$\PP (1^3, 2)$ & $32$&R &$2,\, 4$&$9,\, 2$
\\\hline
\nr \label{thm:QFanoF:5} & $7$& $(2,3)$ & $1/6$&$\PP(1^2,2,3)$ & $29$&R &$6$ &$1$
\\\hline
\nr \label{thm:QFanoF:6} & $4$& $(3^2)$ &$1/3$&$X_6 \subset \PP(1^2,2,3^2)$ & $11$&R &$3$ &$1$
\\\hline
\nr \label{thm:QFanoF:7} & $5$& $(2,4)$ &$1/4$&$X_6 \subset \PP(1^2,2,3,4)$ & $16$ &R &$4$ &$1$
\\\hline
\nr \label{thm:QFanoF:8} & $6$& $(5)$ &$1/5$&$X_6 \subset \PP(1^2,2,3,5)$ &$22$& R &$5$ &$1$
\\\hline
\nr \label{thm:QFanoF:9} & $3$ &$(2^3)$&$1/2$&$X_6 \subset \PP (1^2, 2^2, 3)$ &$7$& &$2$ &$1$
\\\hline
\nr \label{thm:QFanoF:10} & $3$&$(2^2)$&$1$&$X_4 \subset \PP(1^3, 2^2)$ &$14$& R &$2$&$2$
\\\hline
\nr \label{thm:QFanoF:11} & $4$&$(3)$&$2/3$&$X_4 \subset \PP(1^3, 2, 3)$ & $22$&R &$3$&$2$
\\\hline
\nr \label{thm:QFanoF:12} & $3$&$(2)$&$3/2$&$X_3 \subset \PP (1^4, 2)$ &$21$& R &$2$ &$3$
\\\hline
\end{tabularx} 
\end{table}
The column ``Rat'' indicates the rationality of $X$.
One can see that almost all the $\QQ$-Fano threefolds in the table are rational.
In the case~\ref{thm:QFanoF:9} it is known that a very general variety in the family is 
not rational (and even not stably rational) \cite{Okada2019}. 
Rationality question of 
del Pezzo threefolds (case~\ref{thm:QFanoF:3}) has a long story. We refer to the book \cite{IP99} for references
and to \cite{Prz-Ch-Shr:DS}
for a detailed discussion of the case $d=2$. 

\subsection{Singularities}

For the classification of terminal threefold singularities we refer to \cite{Reid:YPG}. 

\begin{sdefinition}
Let $X$ be a threefold with terminal $\QQ$-factorial singularities.
An \emph{extremal blowup} of $X$ is a birational morphism $f: \tilde{X} \to X$ 
such that 
$\tilde{X}$ has only terminal $\QQ$-factorial singularities and $\uprho(\tilde{X}/X)=1$.
\end{sdefinition}
Note that in this situation the divisor $-K_{\tilde X}$ is $f$-ample. 

\begin{stheorem}[{\cite{Kawamata:Div-contr}}]
\label{thm:K:blowup}
Let $X\ni P$ a cyclic quotient terminal threefold singularity of type $\frac1r (a, r-a,1)$, $r\ge 2$
and let $f:\tilde{X} \to X$ be 
an extremal blowup $f:\tilde{X} \to X\ni P$ whose center contains $P$.
Then $f$ is the weighted 
blowup with the system of weights $\frac1r (a, r-a,1)$, in particular, the 
discrepancy of the $f$-exceptional divisor equals $1/r$.
\end{stheorem}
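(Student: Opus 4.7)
My plan is to reduce the statement to a classification of $\mumu_r$-equivariant divisorial contractions of the smooth germ $(\CC^3, 0)$, by passing to the index-one cover of $P \in X$. Let $E$ denote the $f$-exceptional divisor (unique by extremality). Terminality of $X$ gives $a := a(E, X) > 0$, and the fact that $r K_X$ is Cartier near $P$ forces $a \in \tfrac{1}{r}\ZZ_{>0}$. Let $\pi \colon (Y, 0) = (\CC^3, 0) \to (X, P)$ be the canonical cyclic $\mumu_r$-cover, étale in codimension one. The normalised fibre product $\tilde Y$ carries a natural $\mumu_r$-action with $\tilde Y / \mumu_r = \tilde X$; since $\pi$ is crepant, $\tilde Y$ has only terminal singularities, and the induced birational morphism $\tilde g \colon \tilde Y \to Y$ is a $\mumu_r$-equivariant extremal contraction with a single $\mumu_r$-invariant exceptional divisor $\tilde E$ above $0$.

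The heart of the argument is to identify $\tilde g$ with a weighted blowup. Choose $\mumu_r$-semi-invariant coordinates $x_1, x_2, x_3$ on $Y$ of weights $(a, r-a, 1)$. Because $\tilde E$ is $\mumu_r$-invariant, the valuation $v := \operatorname{ord}_{\tilde E}$ is $\mumu_r$-semi-invariant, so the numbers $w_i := v(x_i)$ are well defined and strictly positive. A careful analysis of the associated graded ring $\operatorname{gr}_v \OOO_{Y,0}$---using that $\tilde Y$ has terminal singularities and that $\uprho(\tilde Y / Y) = 1$---shows that this ring is a polynomial algebra in $x_1, x_2, x_3$, so $v$ is the monomial valuation with weights $(w_1, w_2, w_3)$ and $\tilde g$ is the corresponding weighted blowup. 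Equivariance then forces $(w_1, w_2, w_3) \equiv s (a, r-a, 1) \pmod r$ for some $s \in (\ZZ/r)^{\times}$, and the requirement that $\tilde X = \tilde Y / \mumu_r$ be $\QQ$-factorial with terminal singularities rules out all weight triples with $s > 1$ or with a common factor; the only surviving choice is $(a, r-a, 1)$ up to reordering coordinates.

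Consequently $f$ descends to the weighted blowup of $X$ with weights $\tfrac{1}{r}(a, r-a, 1)$, and a direct toric computation yields the discrepancy $a(E, X) = \tfrac{a + (r-a) + 1}{r} - 1 = \tfrac{1}{r}$. The main obstacle is the monomiality step on the cover: one must show that the $\mumu_r$-equivariant valuation $v$ is genuinely monomial in the chosen coordinates, and not some more exotic $\mumu_r$-semi-invariant valuation, and simultaneously rule out weighted blowups of the form $(ta, t(r-a), t)$ with $t > 1$ by checking that they violate $\QQ$-factoriality or terminality after descending. This is the combinatorial and singularity-theoretic core of Kawamata's original argument in \cite{Kawamata:Div-contr}, which relies on Reid's classification of terminal $3$-fold singularities together with the extremality of $\tilde g$.
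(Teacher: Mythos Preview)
The paper does not prove this theorem; it is simply quoted from \cite{Kawamata:Div-contr} and used as a black box throughout, so there is no ``paper's own proof'' to compare against.

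Your outline is a faithful sketch of Kawamata's strategy: pass to the index-one cover $Y\simeq\CC^3$, lift $f$ to a $\mumu_r$-equivariant birational morphism $\tilde g\colon\tilde Y\to Y$, identify $\tilde g$ with a weighted blowup, and then pin down the weights using terminality and $\QQ$-factoriality on the quotient. But as written it is not a proof, and you say so yourself: you flag the ``monomiality step'' as the main obstacle and defer it to \cite{Kawamata:Div-contr}. That step \emph{is} the content of the theorem. Showing that $\operatorname{gr}_v\OOO_{Y,0}$ is polynomial, i.e.\ that the divisorial valuation on the smooth cover is monomial in suitable coordinates, requires real work (Kawamata argues directly with the terminality of $\tilde Y$; a cleaner modern route is via Kawakita's classification \cite{Kawakita:smooth} of divisorial contractions to smooth points, applied $\mumu_r$-equivariantly). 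A second point you gloss over: the preimage of $E$ in $\tilde Y$ could a priori split into a $\mumu_r$-orbit of several prime divisors rather than a single $\mumu_r$-invariant one, so the assertion ``a single $\mumu_r$-invariant exceptional divisor $\tilde E$'' and the claim $\uprho(\tilde Y/Y)=1$ both need justification before the analysis of $v$ can begin. As a roadmap your proposal is correct; as a proof it stops exactly where the substance starts.
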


\begin{stheorem}[{\cite{Kawamata-1992-e-app}}]
\label{thm:K:di}
Let $X\ni P$ a terminal threefold singularity of index $r$.
Then there exists an exceptional divisor $E$ over $P$ whose discrepancy equals $1/r$. 
\end{stheorem}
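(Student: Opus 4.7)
My plan is to invoke the Reid--Mori classification of terminal threefold singularities. Every germ $(X,P)$ of index $r$ admits an index-one cyclic cover $\pi\colon (Y,Q)\to (X,P)$ with $(Y,Q)$ a compound Du Val singularity and Galois group $\mumu_r$; moreover $Y$ embeds in $(\mathbb{A}^4,0)$ as a hypersurface $\{f=0\}$, and the $\mumu_r$-action extends linearly on the ambient $\mathbb{A}^4$ with weights determined by the type (cyclic quotient, $cA/r$, $cAx/r$, $cD/3$, or $cE/2$). The fixed locus of the action on $Y$ is $\{Q\}$, so that $\pi$ is étale in codimension one and discrepancies pull back cleanly.

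The strategy is to construct, case by case, an equivariant weighted blowup of $\mathbb{A}^4$ with weights $\tfrac{1}{r}(w_1,\dots,w_4)$ producing an extraction $\tilde Y \to Y$ with a single exceptional divisor $\tilde E$ whose $\mumu_r$-quotient $E$ on $\tilde X := \tilde Y/\mumu_r$ satisfies $a(E,X)=1/r$. In the purely cyclic quotient case this is exactly Theorem~\ref{thm:K:blowup}. In the hypersurface cases, the weighted blowup is set up so that $f$ becomes weighted-homogeneous of lowest order $d$, and adjunction gives
\[
a(\tilde E, Y) \;=\; \tfrac{1}{r}\bigl(w_1+w_2+w_3+w_4 - d\bigr) - 1;
\]
since the covering map is étale in codimension one, this descends to $a(E,X) = a(\tilde E,Y)$, and the weights $(w_i)$ can be chosen so that the right-hand side equals $1/r$ while remaining compatible with the $\mumu_r$-action on the defining equation $f$.

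The main obstacle is the type-by-type verification that the weighted blowup really extracts a single irreducible divisor of discrepancy exactly $1/r$ and that the resulting $\tilde X$ has only terminal $\QQ$-factorial singularities, so that $\tilde X\to X$ is an extremal blowup in the sense of the earlier definition. This is controlled by the general-elephant principle: a general $H\in |{-K_Y}|$ through $Q$ is a Du Val surface, and its Dynkin diagram dictates which weight systems yield an irreducible exceptional divisor of the right discrepancy while keeping the strict transform terminal. An alternative route would be to pass through a $\QQ$-smoothing of $(X,P)$ to a union of cyclic quotients of index $r$ and argue that an exceptional divisor of discrepancy $1/r$ deforms in the family; this sidesteps much of the case analysis, but requires extra care for the semicontinuity of the minimal discrepancy under deformation.
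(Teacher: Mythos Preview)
The paper does not prove this theorem; it is quoted from Kawamata's appendix \cite{Kawamata-1992-e-app} and used as a black box (e.g.\ in the proof of Proposition~\ref{prop:tq5}). So there is no in-paper argument to compare against.

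Two comments on your plan. First, you are proving more than is asked. The statement only claims the \emph{existence} of an exceptional divisor of discrepancy $1/r$; it does not demand that an extremal blowup extract it. Your requirement that $\tilde X$ be terminal and $\QQ$-factorial is therefore superfluous, and dropping it removes what you correctly identify as ``the main obstacle'': once you only need a single exceptional divisor with the right discrepancy, the weighted blowup just has to satisfy $w_1+w_2+w_3+w_4-d=r+1$ and have an irreducible component of the exceptional locus dominating $P$, with no terminality check on the total space. (Also, your type list is slightly off: the non-main series are $cAx/2$, $cAx/4$, $cD/2$, $cD/3$, $cE/2$; there is no $cAx/r$ for general $r$.)

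Second, what you call the ``alternative route'' is in fact Kawamata's original argument in \cite{Kawamata-1992-e-app}. He uses Reid's $\QQ$-smoothing of $(X,P)$ to its basket of cyclic quotient singularities of index $r$, observes that each cyclic quotient has minimal discrepancy $1/r$ (the Kawamata blowup), and proves lower semicontinuity of the minimal discrepancy in the family, giving $\operatorname{mindisc}(X,P)\le 1/r$; since every discrepancy over an index-$r$ terminal point lies in $\tfrac{1}{r}\ZZ_{>0}$, equality follows. So your primary approach (explicit weighted blowups, later made precise by Hayakawa and Kawakita) and your alternative are both viable, but the deformation argument is the one actually cited.
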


The following useful fact is a consequence of the classification of 
extremal blowups \cite{Kawakita:hi} (see \cite[Lemma~2.6]{P:2013-fano} for explanations).
\begin{slemma}
\label{lemma:discr}
Let $X\ni P$ be a threefold terminal point of index $r>1$
with basket $\B(X,P)$, 
let $f: \tilde X\to X$ be an extremal blowup with $f(E)=P$,
where $E$ is the exceptional divisor, and let $\alpha$ be the discrepancy of $E$.
\begin{enumerate}
\item 
If $X\ni P$ is a point of type other than $\mathrm{cA}/r$ and $r>2$, then 
$\alpha=1/r$.
\item 
If $X\ni P$ is of type $\mathrm{cA}/r$
and $\B(X,P)$ consists of $n$ points of index $r$, then $\alpha=a/r$, where $n\equiv 0\mod a$.
\end{enumerate}
\end{slemma}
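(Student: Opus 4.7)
The plan is to reduce to Kawakita's classification of three-dimensional divisorial contractions whose exceptional divisor is contracted to a point \cite{Kawakita:hi}, and then read off the discrepancy in each case.

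First I would recall the list of terminal threefold singularities of index $r>1$: the cyclic quotients $\frac{1}{r}(a,-a,1)$ together with the compound Du Val families $\mathrm{cA}/r$, $\mathrm{cD}/2$, $\mathrm{cD}/3$, $\mathrm{cE}/2$, $\mathrm{cAx}/2$ and $\mathrm{cAx}/4$. The cyclic quotient case is a special instance of $\mathrm{cA}/r$ (the basket consists of a single index-$r$ point, and the extremal blowup of Theorem~\ref{thm:K:blowup} has discrepancy $1/r$, which is consistent with formula~(ii) specialized to $n=a=1$). Under the hypothesis of~(i), namely $r>2$ and $X\ni P$ not of type $\mathrm{cA}/r$, only the types $\mathrm{cD}/3$ and $\mathrm{cAx}/4$ remain, and for both of these the explicit lists in \cite{Kawakita:hi} show that the only extremal divisorial contraction over $P$ is the Kawamata-type weighted blowup of discrepancy $1/r$. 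This establishes~(i).

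For~(ii), I use that a $\mathrm{cA}/r$ point is analytically isomorphic to
\[
\bigl\{xy+g(z^r,u)=0\bigr\}\bigm/\mumu_r(a,-a,1,0),
\]
and that the axial multiplicity $n=\mathrm{ord}_u g(0,u)$ of this equation coincides with the number of index-$r$ virtual cyclic quotient points in $\B(X,P)$ (a direct computation via Reid's general elephant machinery \cite{Reid:YPG}). Kawakita's classification describes every extremal divisorial contraction to a $\mathrm{cA}/r$ point as an explicit weighted blowup in these coordinates; the discrepancy of the exceptional divisor has the form $a/r$ with the arithmetic constraint that $a$ divide $n$, i.e.\ $n\equiv 0\pmod a$.

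The work in the proof is not conceptual but bookkeeping: one must match Kawakita's normalization of the blowup weights with the basket-counting formula, and confirm the absence of non-standard extremal contractions in the non-$\mathrm{cA}/r$ types with $r>2$. Since both of these verifications are already carried out in \cite[Lemma~2.6]{P:2013-fano}, one can conclude by citing that reference.
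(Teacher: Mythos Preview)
Your proposal is correct and follows exactly the route the paper indicates: the paper itself gives no detailed proof but simply states that the lemma is a consequence of Kawakita's classification \cite{Kawakita:hi} and refers to \cite[Lemma~2.6]{P:2013-fano} for the verification. You have spelled out the case analysis (reducing part~(i) to the types $\mathrm{cD}/3$ and $\mathrm{cAx}/4$, and reading off the divisibility constraint in part~(ii) from the $\mathrm{cA}/r$ normal form and Kawakita's weighted-blowup description), which is precisely the bookkeeping that the cited reference carries out.
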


\subsection{Du Val del Pezzo surfaces}
Let $S$ be a del Pezzo surface with
only Du Val singularities. We assume that $\uprho(S)=1$.
The definitions of Fano indices and the fundamental divisor 
$A_X$ are applicable to $S$ (see \eqref{eq:deq-q} and \eqref{eq:def-AX}).
Recall also our Convention~\ref{conv:def-AXa}.
Let $\mu: \tilde S\to S$ be the minimal resolution.
We say that a curve $L\subset S$ is a \emph{line} if there exists a $(-1)$-curve $E\subset \tilde S$
such that~$L=\mu (E)$.

\begin{slemma}
\label{lemma:DP:0}
In the above notation assume that $d:=K_S^2<8$.
One has:
\begin{enumerate} 
\item
\label{lemma:DP:0--0} 
The set of lines on $S$ is finite and non-empty. 
\item
\label{lemma:DP:0--1}
The group $\Cl(S)$ is generated by the classes of lines. 
\item
\label{lemma:DP:0--2}
For every effective Weil divisor $D$ on $S$ there is a presentation
\[
D\sim a_0(-K_S)+\sum a_iL_i,
\]
where the $L_i$ are lines in $S$, the $a_i$ are non-negative integers, and 
$a_0=0$ if $d>1$.
\item 
\label{lemma:DP:0-1}
For any line $L$ on $S$ we have $L\qq A_S$, hence $\qQ(S)=d$. 

\item 
\label{lemma:DP:0-2} 
If $D$ is a divisor on $S$ such that $D\qq A_S$, then either
\begin{enumerate} 
\item 
\label{lemma:DP:0a}
$\dim |D|=0$ and $D\sim L$, where $L$ is a line, or
\item 
\label{lemma:DP:0b}
$\dim |D|=1$, $d=1$, and $D\sim -K_S$.
\end{enumerate} 

\item 
\label{lemma:DP:0-3a}
If $D$ is an ample divisor, then $|D|\neq \varnothing$.

\item 
\label{lemma:DP:0-3}
If $D$ is an effective divisor such that $\dim |D|=0$ and $D\neq 0$, then 
$D$ is a line.
 
\item 
\label{lemma:DP:0-6}
Assume that $d>1$.
Then for any two lines $L_1$ and $L_2$ on $S$ the divisor $L_1-L_2$ is a non-trivial torsion element in
$\Cl(S)$. In particular,
$\Cl(S)\simeq \ZZ$ if and only if $S$ contains exactly one line.
\end{enumerate}
\end{slemma}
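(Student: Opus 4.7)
My plan is to reduce everything to the minimal resolution $\mu\colon \tilde S\to S$, a smooth weak del Pezzo surface of the same degree $d\le 7$, where the classical theory applies. On $\tilde S$ the Mori cone is generated by $(-1)$-curves and $(-2)$-curves (together with the anticanonical class when $d=1$); the $(-1)$-curves form a finite non-empty set of rigid representatives of fixed numerical classes, and together with the $(-2)$-curves they generate $\Pic(\tilde S)$. Since lines on $S$ are by definition images of $(-1)$-curves and $\Cl(S)=\Pic(\tilde S)/\langle (-2)\text{-curves}\rangle$, pushing this structure down yields (i), (ii) and (iii) directly. For (iv), the projection formula gives $L\cdot(-K_S)=\tilde L\cdot(-K_{\tilde S})=1$, which together with $(-K_S)^2=d$ and $\uprho(S)=1$ forces $L\qq\tfrac{1}{d}(-K_S)$; since $[L]$ generates $\Cl(S)$ modulo torsion by (ii), we conclude $\qQ(S)=d$ and $L\qq A_S$.

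For parts (v)-(vii), the main technical input is the identity $\h^0(S,D)=\h^0(\tilde S,\lfloor\mu^*D\rfloor)$ for a Weil divisor $D$, followed by Kawamata--Viehweg vanishing and Riemann--Roch on $\tilde S$. In (v), any effective $D\qq A_S$ admits by (iii) a decomposition $D\sim a_0(-K_S)+\sum a_iL_i$ with $a_0d+\sum a_i=1$; this forces $D\sim L_j$ for some $j$ (the only option if $d>1$), or additionally $D\sim -K_S$ when $d=1$, and the rigidity $\h^0(\tilde S,\tilde L)=1$ of a $(-1)$-curve gives $|L|=\{L\}$, zero-dimensional. In (vi), $\mu^*D$ is nef and big, so Kawamata--Viehweg kills the higher cohomology of $\lfloor\mu^*D\rfloor$ and Riemann--Roch forces $\h^0>0$. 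In (vii), the decomposition from (iii) reduces matters to showing that whenever $D\ne 0$ is not a single line, $\dim|D|\ge 1$; this follows from a direct Riemann--Roch estimate on $\tilde S$ applied to $\lfloor\mu^*D\rfloor$, the exceptional part of which compensates precisely for the apparent failure of $2\tilde L,3\tilde L,\ldots$ to move on $\tilde S$.

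Finally, (viii) is immediate: for $d>1$, (v) forbids two distinct lines from being linearly equivalent, so $L_1-L_2$ is a non-trivial element of $\Clt S$; combined with (ii) this yields the stated criterion $\Cl(S)\simeq\ZZ$. I expect the main obstacle to be part (vii): when $D$ is not Cartier, the relation between $|D|$ on $S$ and linear systems on $\tilde S$ is distorted by the corrections $\lfloor\mu^*D\rfloor-\tilde D$, and the Riemann--Roch argument must treat several configurations uniformly (lines meeting different singular strata, higher multiples, and the anticanonical pencil in the $d=1$ case).
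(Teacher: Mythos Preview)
Your outline is correct, but for parts (v) and (vii) the paper takes a rather different and cleaner route than the pull-back-to-$\tilde S$ strategy you describe. For (v), instead of first invoking (vi) to get effectiveness and then rigidity of $(-1)$-curves on $\tilde S$, the paper works directly on $S$ via orbifold Riemann--Roch: writing out $\chi(\OOO_S(D))$ and $\chi(\OOO_S(-D))$ and using the symmetry $c_P(D)=c_P(-D)$ of the correction terms together with Kawamata--Viehweg vanishing and Serre duality yields $h^0(\OOO_S(D))=1$ in one stroke; then (iii) finishes. For (vii), the paper avoids entirely the bookkeeping with $\lfloor\mu^*D\rfloor$ that you flag as the main obstacle: it restricts to a general $C\in|-K_S|$, a smooth elliptic curve lying in the smooth locus of $S$, and uses the exact sequence $0\to\OOO_S(D+K_S)\to\OOO_S(D)\to\OOO_C(D)\to 0$ with $H^1(\OOO_S(D+K_S))=0$ to deduce $h^0(\OOO_C(D))=1$, hence $-K_S\cdot D=\deg\OOO_C(D)=1$, and then (iii) identifies $D$ as a line. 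Your approach would also succeed, but the paper's arguments are uniform and sidestep the configuration-by-configuration analysis of exceptional coefficients; conversely, your method has the virtue of staying on the smooth model where classical tools apply without orbifold corrections. One small point: in your (v) you tacitly assume $D$ is effective before applying (iii); you need (vi) (which you prove independently) to justify this, so the logical order should be (vi) before (v).
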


\begin{proof}[Sketch of the proof]
The assertion~\ref{lemma:DP:0--0} follows from the cone theorem applied to $\tilde S$.
For~\ref{lemma:DP:0--1} and~\ref{lemma:DP:0--2} we refer to \cite[Lemma~2.9]{CP:dP}.
The assertion~\ref{lemma:DP:0-1} follows from the equality
$d=-dK_S\cdot L=\qQ(S) (-K_S\cdot A_S)$.
To prove~\ref{lemma:DP:0-2} assume that $D\not\sim -K_S$ and $D\qq A_S$.
By the orbifold 
Riemann-Roch formula \cite{Reid:YPG} applied to $D$ and $-D$ we have:
\begin{eqnarray*}
\chi(S,\OOO_S(D))&=& \frac12 D\cdot (D-K_S) +1+\sum c_P(D),
\\
\chi(S,\OOO_S(-D))&=& \frac12 D\cdot (D+K_S) +1+\sum c_P(-D).
\end{eqnarray*}
By the Kawamata-Viehweg vanishing and Serre duality $\h^i(S, \OOO_S(- D))=0$ 
for $i=0,1,2$ and $\h^i(S, \OOO_S( D))=0$ for $i=1,2$. Since $c_P(-D)=c_P(D)$,
we obtain
$\h^0 (S,\OOO_S(D)) =1$. Hence we may assume that $D$ is effective.
In this case~\ref{lemma:DP:0-2} is a consequence of~\ref{lemma:DP:0--2}
The assertion~\ref{lemma:DP:0-3a} follows from 
\ref{lemma:DP:0-2}.
For~\ref{lemma:DP:0-3}, in view of~\ref{lemma:DP:0--2} it is sufficient to show 
that $-K_S\cdot D=1$. Let $C\in |-K_S|$ be a general element. Then $C$ is a 
smooth elliptic curve lying in the smooth part of $S$. Since $H^1(S, 
\OOO_S(D+K_S))=0$, from the exact sequence
\[
0\longrightarrow \OOO_S(D+K_S) \longrightarrow \OOO_S (D)
\longrightarrow \OOO_C(D) \longrightarrow 0
\]
 we obtain $\h^0(C, \OOO_C(D))=1$, hence $D\cdot C=\deg \OOO_C(D)=1$.

Finally,~\ref{lemma:DP:0-6}
follows from~\ref{lemma:DP:0--0},~\ref{lemma:DP:0--1} and~\ref{lemma:DP:0-2}.
\end{proof} 

\begin{slemma}
\label{lemma:DP}
Let $S$ be a del Pezzo surface with
only Du Val singularities of type \type{A_n} and $\uprho(S)=1$.
If the group $\Cl(S)$ is torsion free, 
then there are only the following possibilities: 
\begin{table}[H]
{\rm
\setcounter{NN}{0}
\renewcommand{\arraystretch}{1.2}
\begin{tabularx}{0.7\textwidth}{|l|l|X|}
\hline
\multicolumn{1}{|c|}{$K_S^2$} & \multicolumn{1}{c|}{$S$} & \multicolumn{1}{c|}{$\h_S(t)$} 
\\\hline
$9$&$\PP^2$ & $1+3t+6t^2+10t^3+15t^4+21t^5+\cdots $ \\
$8$&$\PP(1^2,2)$ &$1+2t+4t^2+6t^3+9t^4+12t^5+\cdots $ \\
$6$&$\PP(1,2,3)$ & $1+\phantom{2}t+2t^2+3t^3+4t^4+5t^5+\cdots $ \\
$5$&$S_6\subset \PP(1,2,3,5)$ &$1+\phantom{2}t+2t^2+3t^3+4t^4+6t^5+\cdots $
\\ \hline
\end{tabularx}
} 
\end{table} 
\noindent
where $S_6\subset \PP(1,2,3,5)$ is a hypersurface of degree $6$ in $\PP(1,2,3,5)$ having a unique singular point
which 
point of type \type{A_4}; this surface is unique up to isomorphism. 
\end{slemma}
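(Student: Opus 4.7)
The plan is to analyse $S$ via its minimal resolution $\mu \colon \tilde S \to S$, which is a weak del Pezzo surface of degree $d := K_S^2$, and to read off $\Cl(S)$ from the lattice of exceptional curves. I split by the value of $d$.

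For $d \ge 8$ a direct classification of Du Val del Pezzo surfaces with $\uprho(S) = 1$ gives only $\PP^2$ ($d = 9$) and $\PP(1^2,2)$ ($d = 8$, with a single $\mathrm{A_1}$ point), both of which satisfy $\Cl(S) \cong \ZZ$ trivially. Assume now $d \le 7$. Let $\Lambda \subset \Pic(\tilde S)$ be the sublattice generated by the classes of the $\mu$-exceptional $(-2)$-curves; this is a negative-definite root sublattice of $K_{\tilde S}^\perp$. Push-forward of divisors yields the exact sequence
\[
0 \longrightarrow \Lambda \longrightarrow \Pic(\tilde S) \longrightarrow \Cl(S) \longrightarrow 0.
\]
Since $-K_{\tilde S}$ is primitive in $\Pic(\tilde S)$, the sublattice $K_{\tilde S}^\perp$ is saturated with quotient $\ZZ$, and the resulting splitting yields
\[
\Cl(S) \;\cong\; \ZZ \,\oplus\, \bigl( K_{\tilde S}^\perp / \Lambda \bigr).
\]
The hypothesis $\uprho(S) = 1$ forces $\rk \Lambda = 9 - d = \rk K_{\tilde S}^\perp$, and torsion-freeness of $\Cl(S)$ becomes equivalent to the equality $\Lambda = K_{\tilde S}^\perp$ as sublattices of $\Pic(\tilde S)$.

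Now I invoke the classical identification of the roots of $K_{\tilde S}^\perp$ as a root system of type $A_1$, $A_2 \oplus A_1$, $A_4$, $D_5$, $E_6$, $E_7$, $E_8$ for $d = 7, 6, 5, 4, 3, 2, 1$, and the fact that for $d \le 6$ the corresponding root lattice is equal to $K_{\tilde S}^\perp$ (by a discriminant comparison). The hypothesis that every singularity of $S$ is of type $\mathrm{A_n}$ means the $(-2)$-curves of $\mu$ form a Dynkin diagram of $A_n$-type, so $\Lambda$ is a direct sum of $A_{n_i}$-lattices. Combined with $\Lambda = K_{\tilde S}^\perp$ this forces the root system of $K_{\tilde S}^\perp$ itself to be $A_n$-type, leaving only $d = 6$ (configuration $\mathrm{A_2 + A_1}$) and $d = 5$ (configuration $\mathrm{A_4}$); the case $d = 7$ is excluded separately because its root lattice has rank $1 < 9 - d = 2$, so a full-rank root sublattice of $K_{\tilde S}^\perp$ does not exist.

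In each of the two remaining cases the weak del Pezzo $\tilde S$ carrying the prescribed $(-2)$-curve configuration is unique up to isomorphism, hence so is its contraction $S$. The explicit identifications $S \cong \PP(1,2,3)$ for $d = 6$ and $S \cong S_6 \subset \PP(1,2,3,5)$ for $d = 5$ then follow by computing the first terms of $\h_S(t)$ via orbifold Riemann--Roch (using Kawamata--Viehweg vanishing on $\tilde S$ to kill higher cohomology and Reid's correction terms for the $\mathrm{A_n}$ points) and recognising generators together with a single degree-$6$ relation of the graded ring $R(S, A_S)$. The principal difficulty is this final identification in degree $5$: matching the Riemann--Roch Hilbert series to the anticanonical graded ring and verifying that every sextic in $\PP(1,2,3,5)$ with a single $\mathrm{A_4}$ singularity is projectively equivalent to one fixed model, which can be carried out by a parameter count against the automorphism group of $\PP(1,2,3,5)$.
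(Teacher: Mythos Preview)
Your argument is correct and takes a genuinely different route from the paper.  The paper's proof is two lines: it cites the Miyanishi--Zhang classification of Gorenstein log del Pezzo surfaces of rank one for the list of surfaces, and Reid's orbifold Riemann--Roch for the Hilbert series.  You instead give a self-contained lattice-theoretic argument via the exact sequence $0\to\Lambda\to\Pic(\tilde S)\to\Cl(S)\to 0$, reducing torsion-freeness of $\Cl(S)$ to the equality $\Lambda=K_{\tilde S}^\perp$, and then using the identification of the root system of $K_{\tilde S}^\perp$ to rule out $d\le 4$ (and $d=7$) under the $\mathrm{A}$-type hypothesis.  What your approach buys is transparency: one sees directly why the combination ``$\uprho=1$, $\Cl$ torsion free, only $\mathrm{A_n}$ singularities'' singles out exactly the degrees $5,6,8,9$, without appealing to a full tabulation.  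What the paper's citation buys is brevity and, in addition, the uniqueness of the surface in each degree (in particular the model $S_6\subset\PP(1,2,3,5)$), which in your write-up is the one step left somewhat informal---your parameter count against $\operatorname{Aut}\PP(1,2,3,5)$ is a reasonable sketch, but in practice one either cites the classification or argues more carefully that a degree-$5$ weak del Pezzo with an $\mathrm{A_4}$ chain of $(-2)$-curves has no moduli.
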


\begin{proof}
The classification can be found in
\cite{Miyanishi-Zhang:88} and computation of $\h_S(t)$ follows from 
the orbifold Riemann-Roch \cite{Reid:YPG}. 
\end{proof} 

\section{$\QQ$-Fano threefolds with torsion in the divisor class group}

In this section we discuss $\QQ$-Fano threefolds whose class group $\Cl(X)$ contains
non-trivial torsion. 

\begin{proposition}[see {\cite[\S~3]{P:2019:rat:Q-Fano}}]
\label{prop:tor}
Let $X$ be a $\QQ$-Fano threefold with $\qQ(X)\ge 5$ and $|\Clt{X}|=N>1$.
Then $\qQ(X)=5$ or $7$ and $X$ belongs to one of the following classes:
\begin{center}
\rm
\setcounter{NN}{0}
\renewcommand{\arraystretch}{1.2}
\begin{tabularx}{0.99\textwidth}{|l|l|l|c|X|}
\hline
&\multicolumn{1}{c|}{$A_X^3$} & \multicolumn{1}{c|}{$\B(X)$} & \multicolumn{1}{c|}{$\g(X)$}
&\multicolumn{1}{c|}{$\h_X(t,\sigma)$ }
\\\hline
\multicolumn{5}{|c|}{\textbf{$\qQ(X)=7$}, $N=2$}
\\\hline
\nr & $1/24$&
$(2^2, 3, 4, 8)$&
$6$&
$1+ t \sigma +t^2 +t^2 \sigma + 2 t^3 + 2 t^3 \sigma + 3 t^4 + 3 t^4 \sigma + 4 t^5 + 4 t^5 \sigma + \cdots$
\\
\nr\label {tab:h:q=7t:d=1/30} &$1/30$&
$(2, 6, 10)$&
$5$&
$1+t +t^2 +t^2 \sigma +t^3 + 2 t^3 \sigma + 2 t^4 + 3 t^4 \sigma + 3 t^5 + 4 t^5 \sigma + \cdots $
\\\hline
\multicolumn{5}{|c|}{\textbf{$\qQ(X)=5$, $N=3$}}
\\\hline
\nr & 
$1/18$& $(2, 9^2)$ & $2$ &
$1+t + t^2 +t^2 \sigma +t^2 \sigma^2 +t^3 + 2 t^3 \sigma + 2 t^3 \sigma^2 + \cdots$
\\\hline
\multicolumn{5}{|c|}{\textbf{$\qQ(X)=5$, $N=2$}}
\\\hline
\nr & 
$1/6$& $(2, 4^2, 6)$ & $10$ & 
$1+t +t \sigma + 2 t^2 + 3 t^2 \sigma + 4 t^3 + 5 t^3 \sigma + 8 t^4 + 7 t^4 \sigma + 12 t^5 + 11 t^5 \sigma + \cdots$
\\\nr & 
$1/8$& $(2^2, 4, 8)$ & $7$ &
$1+t +t \sigma + 2 t^2 + 2 t^2 \sigma + 3 t^3 + 4 t^3 \sigma + 6 t^4 + 6 t^4 \sigma + 9 t^5 + 9 t^5 \sigma + \cdots$
\\\nr & 
$1/12$& $(4^2, 12)$ & $4$ &
$1+t + t^2 +t^2 \sigma + 2 t^3 + 2 t^3 \sigma + 4 t^4 + 4 t^4 \sigma + 6 t^5 + 6 t^5 \sigma + \cdots$
\\\nr & 
$1/28$& $(2, 4, 14)$ & 
$1$ &
$1+t + t^2 + t^3 +t^3 \sigma + 2 t^4 + 2 t^4 \sigma + 3 t^5 + 3 t^5 \sigma +\cdots$ 
\\\hline
\end{tabularx}
\end{center}
In particular, $\p_1(X)=1$.
\end{proposition}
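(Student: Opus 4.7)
The plan is to reduce the statement to a finite enumeration via Reid's orbifold Riemann--Roch in its $T$-equivariant form, using the constraints of Theorem~\ref{thm:index}.

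First, Theorem~\ref{thm:index}\ref{thm:indexd} restricts $\qQ(X)$ to $\{5,6,7,8,9,11,13,17,19\}$ under the hypothesis $\qQ(X)\ge 5$. For each such value, the admissible baskets $\B(X)=(r_P)$ are constrained by Theorem~\ref{thm:index}\ref{thm:indexc}, i.e.\ $rA_X^3\in\ZZ$ with $r=\mathrm{lcm}(r_P)$ the global Gorenstein index. Parts~\ref{thm:indexa}--\ref{thm:indexb} of the same theorem impose compatibility between $\qW(X)$ and $r$, which, combined with the existence of an $N$-torsion class in $\Cl(X)$, already prunes many candidate $(\qQ(X),\B(X))$ pairs.

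Second, for each surviving pair $(\qQ(X),\B(X))$ and each hypothetical $N=|\Clt{X}|>1$, I would fix a generator $T$ of a cyclic $N$-torsion subgroup of $\Cl(X)$ and apply the orbifold Riemann--Roch formula of~\cite{Reid:YPG} to compute
\[
\h^0\bigl(X,\OOO_X(mA_X+jT)\bigr)\qquad (0\le m\le 5,\ 0\le j\le N-1)
\]
as an explicit function of $A_X^3$, the basket data, and the character weights of $T$ at each virtual quotient point. Kawamata--Viehweg vanishing and Serre duality convert these into integer equalities, and together with non-negativity they turn the search for admissible quadruples $(\qQ(X),\B(X),A_X^3,N)$ into a finite linear arithmetic problem.

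Third, enumerating the solutions (the same exercise is tabulated in~\cite{GRD} and carried out in~\cite[\S 3]{P:2019:rat:Q-Fano}) shows that only $\qQ(X)\in\{5,7\}$ admit solutions with $N>1$, and produces exactly the seven rows of the table, in each of which $\B(X)$, $A_X^3$ and $\h_X(t,\sigma)$ are forced. The assertion $\p_1(X)=1$ is then read off each tabulated $T$-Hilbert series: the $t$-coefficient is $1$, $\sigma$, or $1+\sigma$, so $\max_j\dim H^0(X,\OOO_X(A_X+jT))=1$, and at least one summand is nonzero.

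The main obstacle is the enumeration step: the orbifold Riemann--Roch computation depends delicately on the character of $T$ at each basket point, and a rigorous argument that no admissible quadruple outside the table exists requires a careful and lengthy case analysis. This is the bulk of the work in~\cite[\S 3]{P:2019:rat:Q-Fano}, and in practice one cross-checks the result against the database~\cite{GRD}.
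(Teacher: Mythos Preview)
Your proposal is correct and matches the paper's approach: the paper gives no in-text proof of this proposition, citing instead \cite[\S 3]{P:2019:rat:Q-Fano}, and the underlying method is precisely the computer enumeration via orbifold Riemann--Roch plus Kawamata--Viehweg vanishing that is spelled out in Appendix~\ref{sect:app}. One minor addition worth noting is that the Appendix also invokes Kawamata's boundedness inequality~\eqref{eq:r} and the Bogomolov--Miyaoka-type bound to cut down the basket search before the Riemann--Roch step, but otherwise your outline is the same algorithm.
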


The following fact is a consequence of computer calculations as explained in Appendix~\ref{sect:app}.
In principle, one can perform them by hand but since they are not conceptual it 
is 
more reasonable to use a computer.
\begin{proposition}
\label{prop:qW-neq-qQ}
Let $X$ be a $\QQ$-Fano threefold with $\qQ(X)\ge 3$ and $\qW(X)\neq \qQ(X)$.
Then $\qQ(X)=3$ or $4$ and $X$ belongs to one of the following classes:
\begin{table}[H]
\rm
\setcounter{NN}{0}
\renewcommand{\arraystretch}{1.2}
\begin{tabularx}{0.99\textwidth}{|l|l|l|l|c|X|}
\hline
&\multicolumn{1}{c|}{$A_X^3$} & \multicolumn{1}{c|}{$\B(X)$} & \multicolumn{1}{c|}{$\g(X)$}
& \multicolumn{1}{c|}{$\p_1(X)$} 
&\multicolumn{1}{c|}{$\h_X(t,\sigma)$ }
\\\hline
\multicolumn{6}{|c|}{\textbf{$\qQ(X)=3$}, $\Clt{X}\simeq \ZZ/3\ZZ$}
\\\hline
\nr \label{prop:qW-neq-qQ:1} & $\fracf 12$ & $(3^4, 6)$ & $6$&$2$
& $1+(1+\sigma+2\sigma^2)t+(3+4\sigma+4\sigma^2)t^2+(8+8\sigma+7\sigma^2)t^3+\cdots$
\\
\nr \label{prop:qW-neq-qQ:2} & $\fracf 1{10}$ & $(3^4, 5, 6)$ & $0$&$1$
& $1+\sigma^2t+(\sigma+\sigma^2)t^2+(2+2\sigma+\sigma^2)t^3+\cdots$
\\
\nr \label{prop:qW-neq-qQ:3} & $\fracf 14$ & $(2, 3^2, 12)$ & $2$&$1$
& $1+(\sigma+\sigma^2)t+2(1+\sigma+\sigma^2)t^2+4(1+\sigma+\sigma^2)t^3+\cdots$
\\\hline
\multicolumn{6}{|c|}{\textbf{$\qQ(X)=4$}, $\Clt{X}\simeq \ZZ/2\ZZ$}
\\ \hline
\nr \label{prop:qW-neq-qQ:4} & $\fracf 13$ & $(2^5, 6)$ & $ 10$&$2$
&$1+(1+2\sigma)t+(4+3\sigma)t^2+7(1+\sigma)t^3+12(1+\sigma)t^4+\cdots$
\\
\nr \label{prop:qW-neq-qQ:5} & $\fracf 2{15}$ & $(2^5, 5, 6)$ & $ 3$&$1$
& $1+\sigma t+(2+\sigma)t^2+3(1+\sigma)t^3+5(1+\sigma)t^4+\cdots$
\\
\nr \label{prop:qW-neq-qQ:6} & $\fracf 1{21}$ & $(2^5, 6, 7)$ & $ 0$&$1$
& $1+\sigma t+t^2+(1+\sigma)t^3+2(1+\sigma)t^4+\cdots$
\\
\nr \label{prop:qW-neq-qQ:7} & $\fracf 25$ & $(2^3, 10)$ & $ 12$&$2$
& $1+(1+2\sigma)t+4(1+\sigma)t^2+8(1+\sigma)t^3+14(1+\sigma)t^4+\cdots $
\\
\nr \label{prop:qW-neq-qQ:8} & $\fracf 1{15}$ & $(2^3, 3, 10)$ & $ 1$&$1$
& $1+\sigma t+(1+\sigma)t^2+2(1+\sigma)t^3+3(1+\sigma)t^4+\cdots$
\\
\nr \label{prop:qW-neq-qQ:9} & $\fracf 15$ & $(2^3, 5, 10)$ & $ 5$&$1$
& $1+\sigma t+2(1+\sigma)t^2+4(1+\sigma)t^3+7(1+\sigma)t^4+\cdots $
\\
\nr \label{prop:qW-neq-qQ:10} & $\fracf 4{35}$ & $(2^3, 7, 10)$ & $ 2$&$1$
& $1+\sigma t+(1+\sigma)t^2+2(1+\sigma)t^3+4(1+\sigma)t^4+\cdots$
\\\hline
\end{tabularx}
\end{table}

\end{proposition}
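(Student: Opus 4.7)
The plan is an enumeration argument relying on Theorem~\ref{thm:index} together with the orbifold Riemann-Roch formula, exactly along the lines of the tabulation in \cite{GRD}; the claim is that after imposing all of the positivity and integrality constraints on the refined Hilbert series, only the nine cases above survive. The calculation is not conceptual, which is why the author outsources it to Appendix~\ref{sect:app}, but the structure is as follows.

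First I would use Theorem~\ref{thm:index}\ref{thm:indexa}-\ref{thm:indexb} to narrow the admissible pairs $(\qW(X),\qQ(X))$. The hypothesis $\qQ(X)\ne \qW(X)$ forces $\qW(X)$ to share a common prime factor with the Gorenstein index $r$, and forces $\qW(X)\mid \qQ(X)$ with strict inequality. Combined with the enumeration $\qQ(X)\in\{1,\dots,9,11,13,17,19\}$ from \ref{thm:indexd}, and with the assumption $\qQ(X)\ge 3$, one obtains a short list of candidate pairs. Proposition~\ref{prop:tor} already disposes of $\qQ(X)\ge 5$ (there the table records only the torsion-free-quotient data $\qW=\qQ$, and the cases listed all satisfy $\qW(X)=\qQ(X)$ up to what is forced by \ref{thm:indexb}), so one is reduced to $\qQ(X)\in\{3,4\}$ and to guessing the candidates $\qW(X)=1$ or $\qW(X)=2$.

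Next, for each candidate $(\qW,\qQ)$ I would enumerate all baskets $\B(X)$ of terminal cyclic quotient virtual points compatible with \ref{thm:index}\ref{thm:indexc} (so that $rA_X^3\in\ZZ$) and with the orbifold Riemann-Roch expression
\[
\chi(X,\OOO_X(D))=\frac{1}{12}D\cdot(D-K_X)\cdot(2D-K_X)+\frac{1}{12}D\cdot c_2(X)+\sum_{P\in\B(X)} c_P(D)
\]
for $D\qq mA_X+jT$, where $T$ is a generator of $\Clt{X}$. Applying this to $D=mA_X+jT$ for small $m$ and $j\in\{0,\dots,N-1\}$ and using Kawamata-Viehweg vanishing yields the coefficients of the $T$-Hilbert series $\h_X(t,\sigma)$. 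The constraints to impose are: (a) $\p_n(X)\ge 0$ for all $n$, (b) $A_X^3>0$, (c) $-K_X\cdot c_2(X)>0$ with the integrality reformulation of Kawamata, (d) the correct torsion pattern $\Clt{X}\simeq\ZZ/N\ZZ$ read off from the $\sigma$-eigenspaces of the basket's local class groups, and (e) boundedness of the index $\qW$ on each refined eigenspace. For each surviving candidate one then verifies that the first few coefficients of $\h_X(t,\sigma)$ match those recorded in the table, including the value of $\p_1(X)$.

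The main obstacle is purely computational: the number of candidate baskets is large and the book-keeping for the $T$-Hilbert series with torsion is delicate, because the class of $A_X$ in $\Cl(X)$ is defined only modulo torsion and because the local contributions $c_P(mA_X+jT)$ depend on how the global torsion element $T$ restricts to each singular point $P$. This is precisely what a machine search is well-adapted to handle, and is the content of Appendix~\ref{sect:app}; the conceptual part of the proof is the reduction in the first two paragraphs above, after which the enumeration over the finitely many admissible combinations is mechanical.
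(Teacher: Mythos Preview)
Your proposal is correct and matches the paper's own approach: the paper states flatly that the proposition ``is a consequence of computer calculations as explained in Appendix~\ref{sect:app}'', and the enumeration you describe (baskets bounded via $-K_X\cdot c_2(X)>0$, orbifold Riemann--Roch for $mA_X+jT$, Kawamata--Viehweg vanishing to force nonnegativity and the torsion pattern) is exactly the algorithm of that appendix. Your extra reduction step---observing that $\qW(X)\neq\qQ(X)$ forces $\Clt{X}\neq 0$, and then invoking Proposition~\ref{prop:tor} to see that for $\qQ(X)\ge 5$ every listed basket has indices coprime to $\qQ(X)$, hence $\qW=\qQ$ there---is a clean conceptual shortcut the paper does not spell out, though it is implicit in the same search. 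One small slip: the table has ten cases, not nine.
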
 

\begin{proposition}
\label{prop:F-tor4}
Let $X$ be a $\QQ$-Fano threefold with $\qQ(X)\ge 3$, $\qW(X)\neq \qQ(X)$, and $\p_1(X)\ge 2$.
Then one of the following holds
\begin{enumerate} 
\item
\label{prop:F-tor4a}
$X$ is of type \xref{prop:qW-neq-qQ:1} of Proposition~\xref{prop:qW-neq-qQ} and then $X$ is the quotient $X'/\mumu_3$,
where $X'$ is a hypersurface of degree $3$ in $\PP(1^4,2)$:
\[
\left\{x_2x_1^{(1)}+\phi_3\big(x_1^{(1)},\dots, x_1^{(4)}\big)=0\right\}/\mumu_3(0,0,1,1,-1).
\] 
\item
\label{prop:F-tor4b}
$X$ is of type \xref{prop:qW-neq-qQ:4} of Proposition~\xref{prop:qW-neq-qQ} and then $X$ is the quotient $X'/\mumu_2$,
where $X'$ is a hypersurface of degree $4$ in $\PP(1^3,2,3)$:
\[
\left\{x_3x_1+x_2^2+\phi_4(x_1',x_1'')+\phi_2(x_1',x_1'')x_1^2=0\right\}/\mumu_2(0,1,1,1,0).
\]
\end{enumerate}
Here the subscript is the degree of the corresponding variable or polynomial.

In both cases $X$ is rational.
\end{proposition}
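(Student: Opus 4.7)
The argument runs in three steps. First, by Proposition~\ref{prop:qW-neq-qQ}, the hypotheses $\qQ(X)\ge 3$ and $\qW(X)\ne\qQ(X)$ restrict $X$ to one of the ten families listed there; inspection of the $\p_1$ column shows that only rows~\xref{prop:qW-neq-qQ:1} and~\xref{prop:qW-neq-qQ:4} satisfy $\p_1(X)\ge 2$, which places $X$ numerically in case~\ref{prop:F-tor4a} or case~\ref{prop:F-tor4b}, respectively. Fix a generator $T$ of $\Clt{X}$ and set $N:=|\Clt{X}|$, equal to $3$ or $2$ in these cases. Form the cyclic Galois cover
$$\pi\colon X':=\operatorname{Spec}_X\bigoplus_{j=0}^{N-1}\OOO_X(-jT)\longrightarrow X,$$
with Galois group $\mumu_N$. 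Then $X'$ is again a $\QQ$-Fano threefold, $\pi^*A_X=A_{X'}$, and the Hilbert series of $X'$ is recovered from $\h_X(t,\sigma)$ by specialising $\sigma=1$.

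Second, I would identify $X'$ explicitly. In case~\ref{prop:F-tor4a} one obtains $\h_{X'}(t)=1+4t+11t^2+23t^3+\cdots$, matching the Hilbert series of a cubic hypersurface in $\PP(1^4,2)$; in case~\ref{prop:F-tor4b} one obtains $\h_{X'}(t)=1+3t+7t^2+14t^3+24t^4+\cdots$, matching a quartic hypersurface in $\PP(1^3,2,3)$. A dimension count of graded pieces shows that the anticanonical ring $R(X',A_{X'})$ is generated by exactly the sections recorded by the weights of the ambient weighted projective space, with a single relation in the degree of the hypersurface, yielding the claimed embedding. I would then choose these generators as $\mumu_N$-eigenvectors, reading off their weights from the coefficients of $\sigma^j$ in $\h_X(t,\sigma)$, and use $\mumu_N$-invariance of the defining equation. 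An enumeration of the invariant monomials of the correct weighted degree, combined with the requirement that the coordinate point $[0{:}\cdots{:}0{:}1]$ of the ambient weighted projective space not contribute a singularity of $X'$ incompatible with the basket $\B(X')$ predicted by $\B(X)$, pins down the equation up to coordinate change and forces the mixing coefficient of $x_2x_1^{(1)}$ (resp.\ $x_3x_1$) to be non-zero.

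Third, rationality is then almost immediate: in each normal form the highest-weight variable ($x_2$, resp.\ $x_3$) appears linearly with a non-zero monomial coefficient, so I can solve for it and obtain a $\mumu_N$-equivariant birational equivalence between $X'$ and the affine chart $\{x_1^{(1)}\ne 0\}\simeq\mathbb{A}^3$, respectively $\{x_1\ne 0\}\simeq\mathbb{A}^3$, of the ambient weighted projective space. The induced action of $\mumu_N$ on $\mathbb{A}^3$ is linear and diagonal, and the invariant field of such an action is purely transcendental --- a transcendence basis of invariants is exhibited directly, for example $\{x_1^{(2)}/x_1^{(3)},\,x_1^{(2)}x_1^{(4)},\,(x_1^{(4)})^3\}$ in case~\ref{prop:F-tor4a} --- so $X=X'/\mumu_N$ is rational in both cases. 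The main difficulty lies in the second step: keeping track of the $\mumu_N$-eigen-weights of the generators and exploiting the prescribed basket of singularities to rule out variant forms of the defining equation and force non-vanishing of the mixing coefficient.
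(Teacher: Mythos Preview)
Your plan has a genuine gap at the very first step. You claim that inspection of the $\p_1$ column in Proposition~\ref{prop:qW-neq-qQ} leaves only rows~\ref{prop:qW-neq-qQ:1} and~\ref{prop:qW-neq-qQ:4}, but this is wrong: row~\ref{prop:qW-neq-qQ:7} (with $\qQ(X)=4$, $\B(X)=(2^3,10)$, $A_X^3=2/5$) also has $\p_1(X)=2$. So after passing to the double cover you would be left with a putative $X'$ having $\qW(X')=4$, $A_{X'}^3=4/5$, $\B(X')=(5)$, $\g(X')=26$, and $\dim|A_{X'}|=2$. No entry of Theorem~\ref{thm:QFanoF} matches these invariants (the Gorenstein index is $5>\qQ(X')$, so the theorem does not even apply), and your Hilbert-series matching argument would not detect the problem either. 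The paper eliminates this case by a separate result, Proposition~\ref{prop:wF4}: any weak Fano threefold with terminal singularities, $\g>22$, and $-K\sim 4B$ with $\dim|B|\ge 2$ must be $\PP^3$. The proof of that proposition is not numerics --- it runs the MMP on a $\QQ$-factorialisation of $X'$, classifies the last divisorial contraction via Kawakita's theorem on weighted blowups of smooth points, and derives a genus contradiction. Without this input row~\ref{prop:qW-neq-qQ:7} cannot be excluded.

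Two further remarks on your second step. First, the cover $X'$ is a Fano threefold with terminal singularities but is \emph{not} a priori $\QQ$-Fano (it could have $\uprho>1$); the paper identifies $X'$ in cases~\ref{prop:qW-neq-qQ:1} and~\ref{prop:qW-neq-qQ:4} by invoking Theorem~\ref{thm:QFanoF}, which only requires a Cartier divisor $H\qq mA_{X'}$ with $m<\qQ(X')$ --- this is available since the Gorenstein index of $X'$ is $2$ (resp.\ $3$). Your Hilbert-series-plus-dimension-count route would need to replace this classification input. Second, the determination of the $\mumu_N$-weights is more delicate than reading off $\h_X(t,\sigma)$: the paper fixes the weight of the highest-degree variable by analysing the terminal quotient singularity at the corresponding coordinate point, and then rules out the remaining weight ambiguities case by case using the constraint that the fixed locus be zero-dimensional and the defining equation be semi-invariant. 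Your sketch gestures at this but does not carry it out.
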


\begin{proof}
By Proposition~\ref{prop:qW-neq-qQ} we have either $\qQ(X)=3$ or $\qQ(X)=4$.
First, consider the case $\qQ(X)=3$. Then the variety
$X$ is of type \xref{prop:qW-neq-qQ:1} of Proposition~\xref{prop:qW-neq-qQ}.
The group $\Clt{X}\simeq \ZZ/3\ZZ$ defines a triple cyclic cover $\pi: X'\to X$ which is \'etale outside $\Sing(X)$. Thus $X=X'/\mumu_3$, where
$X'$ is a Fano threefold with terminal singularities (not necessarily $\QQ$-Fano) such that
$\qW(X')$ is divisible by $3$, $A^3_{X'}=3/2$, and $\B(X')=(2)$.
By Theorem~\ref{thm:QFanoF} the variety $X'$ is a hypersurface of degree $3$ in $\PP(1^4,2)$.
In these settings, 
the unique point $P'\in X'$ of index $2$ has coordinates $(0,0,0,0,1)$.
Since $P'\in X'$ is a terminal cyclic quotient singularity, the variable $x_2$ of degree $2$ appears in the equation of $X'$.

The embedding $X'\subset \PP(1^4,2)$ is canonical, 
so it is $\mumu_3$-equivariant and the action of $\mumu_3$ on $X'$ is induced by a liner action on the ambient space $\PP(1^4,2)$.
Moreover, the homogeneous coordinates $x_1^{(k)}$ and $x_2$ can be taken to be semi-invariant.
Modulo a linear coordinate change we may assume that the equation of $X'$ is as follows:
\[
x_2x_1^{(1)}+\phi(x_1^{(1)},\dots, x_1^{(4)})=0.
\]
Consider the affine chart $U_2:=\{x_2\neq 0\}\subset X'$.
Then 
\[
U_2=\{y^{(1)}+\phi'(y^{(1)},\dots, y^{(4)})=0\}/\mumu_2(1,1,1,1).
\]
The quotient $U_2/\mumu_3$ is a terminal cyclic quotient singularity.
Hence $\mumu_3$-action on $U_2$ has weights $(a,1,1,-1)$ for some $a\in \{0,1,2\}$ modulo permutations 
of $y^{(2)},y^{(3)}, y^{(4)}$ and changing the generator of $\mumu_3$.
Thus $\mumu_3$-action on $\PP(1^4,2)$ has weights $(0,a,1,1,-1)$.
If $a=1$, then the fixed point locus contains the curve $\{x_2=x_1^{(4)}=0\}\cap X'$, a contradiction. 
If $a=-1$, then, by the same reason, the line $x_2=x_1^{(2)}=x_1^{(3)}=0$ is not contained in 
$X'$ and so $\phi(x_1^{(1)},0,0, x_1^{(4)})\neq 0$. But in this case $\phi$ and $x_2x_1^{(1)}$ must be $\mumu_3$-invariant, a contradiction.
Hence $a=0$. This proves~\ref{prop:F-tor4a}.

Now consider the case $\qQ(X)=4$. Then again by Proposition~\ref{prop:qW-neq-qQ} the variety
$X$ is of type \xref{prop:qW-neq-qQ:4} or \xref{prop:qW-neq-qQ:7}.
The group $\Clt{X}\simeq \ZZ/2\ZZ$ defines a double cover $\pi: X'\to X$ which is \'etale outside $\Sing(X)$, where
$X'$ is a Fano threefold with terminal singularities. Moreover,
in our two cases we have
\begin{itemize}
\item[{\ref{prop:qW-neq-qQ:4}}.]
$\qW(X')=4$, $A^3_{X'}=2/3$, $\B(X')=(3)$, $\dim |A_{X'}|=2$, $\g(X')=22$;
\item[{\ref{prop:qW-neq-qQ:7}}.]
$\qW(X')=4$, $A^3_{X'}=4/5$, $\B(X')=(5)$, $\dim |A_{X'}|=2$, $\g(X')=26$.
\end{itemize}
By Theorem~\ref{thm:QFanoF} in the case
\xref{prop:qW-neq-qQ:4}
the variety $X'$ is a hypersurface of degree $4$ in $\PP(1^3,2,3)$ and 
by Proposition~\ref{prop:wF4} below the case \xref{prop:qW-neq-qQ:7} does not occur.
As above
the embedding $X'\subset \PP(1^3,2,3)$ is $\mumu_2$-equivariant, where 
the action on $X'$ is induced by a liner action on the ambient space $\PP(1^3,2,3)$.
Moreover, we may assume that the coordinates $x_1,x_1',x_1'',x_2,x_3$ are semi-invariants
with eigenvalues $\pm 1$ and $x_1$ is an invariant. 
Since $\B(X')=(3)$, the terms $x_2^2$ and some of $x_3x_1$, $x_3x_1'$ or $x_3x_1''$
appear in the equation. Modulo an obvious coordinate change we may assume that the equation of $X'$ is as follows:
\[ 
x_3x_1+x_2^2+\phi(x_1,x_1',x_1'')=0,
\]
where $\phi$ is a semi-invariant of degree $4$. Then we see that $\phi$ is in fact an invariant and $x_3$ must be an invariant as well. Since the set of fixed points is finite, the variables $x_1',x_1'', x_2$ cannot be invariant. Hence the action have the desired form.
The rest is obvious.
\end{proof} 

\begin{sproposition}
\label{prop:wF4}
Let $Y$ be a weak Fano threefold with terminal singularities with $\g(Y)> 22$.
Assume that there exists a Weil divisor $B$ on $Y$ such that $-K_Y\sim 4B$ and $\dim |B|\ge 2$.
Then $Y\simeq\PP^3$. 
\end{sproposition}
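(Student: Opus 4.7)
The plan is to cut with a general section $S\in|B|$, use adjunction to reduce to a surface problem, and bound $h^0(Y,4B)$ from above in terms of $B^3$.

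Since $-K_Y\sim 4B$ is nef and big and $\dim|B|\ge 2$, $B$ itself is nef and big; after disposing of a possible fixed component of $|B|$, a general $S\in|B|$ is an irreducible, generically reduced surface. By adjunction, $K_S=(K_Y+S)|_S=-3B|_S$, so $(S,B|_S)$ is a log del Pezzo surface of index divisible by $3$ with $K_S^2=9B^3$, and the standard bound $K_S^2\le 9$ gives $B^3\le 1$. Kawamata-Viehweg vanishing applied to $(k-1)B=K_Y+(k+3)B$ with $(k+3)B$ nef and big for $k\ge 1$ yields $H^i(Y,(k-1)B)=0$ for $i>0$, so the restriction sequences
\[
0\to \OOO_Y((k-1)B)\to \OOO_Y(kB)\to \OOO_S(kB|_S)\to 0
\]
combine to the identity $h^0(Y,4B)=1+\sum_{k=1}^{4}h^0(S,kB|_S)$. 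The analogous vanishing on $S$ gives $h^0(S,kB|_S)=\chi(S,kB|_S)$ for $k\ge 1$, and orbifold Riemann-Roch expresses this as $\tfrac{k(k+3)}{2}B^3+1+\kappa_k$, where $\kappa_k$ is a correction at the singular points of $S$.

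The hypothesis $g(Y)>22$ gives $h^0(Y,4B)\ge 25$; in the Du Val case this forces $30B^3+5\ge 25$, i.e.\ $B^3\ge 2/3$, so $K_S^2\in[6,9]$. The main case analysis then rules out $K_S^2<9$: by Lemma~\ref{lemma:DP}, the only $\rho(S)=1$ Du Val candidate of degree in $\{6,7,8\}$ with $\qQ(S)$ divisible by $3$ is $S\simeq \PP(1,2,3)$ with $B|_S=2A_S$, for which a direct monomial count gives
\[
\sum_{k=1}^{4}h^0(S,kB|_S)=2+4+7+10=23,
\]
so $h^0(Y,4B)=24$ and $g(Y)=22$, contradicting $g(Y)>22$. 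Configurations with $\rho(S)>1$ or non-Du Val singularities are handled via the orbifold correction terms, which only decrease the section count. Thus $K_S^2=9$, $S\simeq \PP^2$, $B|_S=\OOO_{\PP^2}(1)$, and $B^3=1$.

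Since $|B||_S=|\OOO_{\PP^2}(1)|$ is base-point-free on $S$ and $S$ sweeps out $Y$ as it varies in $|B|$, the base locus of $|B|$ is empty, so $|B|$ defines a morphism $\varphi:Y\to \PP^N$ with $N=\dim|B|\ge 3$; as $B^3=1$, the image has dimension $3$ and degree $1$, so $\varphi(Y)=\PP^3$ and $\varphi$ is birational. The morphism is crepant ($\varphi^*K_{\PP^3}=K_Y$), and since $\PP^3$ is smooth, any $\varphi$-exceptional divisor would have discrepancy $0$ over a smooth point, which is impossible. Hence $\varphi$ is small; a small crepant birational morphism onto a smooth target has no contracted curves (the base would otherwise acquire non-terminal canonical singularities), so $\varphi$ is finite birational, i.e.\ an isomorphism, giving $Y\simeq \PP^3$. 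The main obstacle is the middle case analysis: carefully classifying (or bounding via orbifold Riemann-Roch) all log del Pezzo surfaces $S$ with $-K_S$ divisible by $3$ and $K_S^2\in[6,9]$, including non-Du Val and higher-Picard-rank configurations, to ensure that only $S\simeq\PP^2$ admits $h^0(Y,4B)\ge 25$.
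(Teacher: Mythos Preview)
Your approach via restriction to a general $S\in|B|$ is genuinely different from the paper's proof, which runs the MMP on a $\QQ$-factorialization of $Y$, lands on a $\QQ$-Fano end product with $\qW$ divisible by $4$ and genus $>22$ (hence $\PP^3$ by \cite[Theorem~1.2]{P:2013-fano}), and then analyses the last extremal contraction using Kawakita's classification of smooth-point blowups to derive a contradiction with $\g(Y)>22$.

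However, your argument has a genuine gap at the surface step. You invoke ``the standard bound $K_S^2\le 9$'' and later conclude $S\simeq\PP^2$ from $K_S^2=9$, but both facts require $S$ to have at worst Du Val singularities. Nothing in the hypotheses forces this: $B$ is only a Weil divisor and $Y$ is allowed to have non-Gorenstein terminal singularities, so a general $S\in|B|$ will typically pass through such points and pick up genuinely klt (non--Du Val) quotient singularities. In that generality $K_S^2\le 9$ is false, and $K_S^2=9$ does not characterise $\PP^2$; for instance $\PP(1,1,4)$ has $-K_S=6A_S=3(2A_S)$ and $K_S^2=9$, yet is not $\PP^2$, and plugging it into your section count would give $h^0(Y,4B)=35$, comfortably above $25$. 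So the heart of your case analysis (which you flag as ``the main obstacle'') is not the problem --- the problem is one step earlier, in bounding $B^3$ and pinning down $S$.

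A second, related issue: the short exact sequence $0\to\OOO_Y((k-1)B)\to\OOO_Y(kB)\to\OOO_S(kB|_S)\to 0$ is only available when $S$ is Cartier. At a non-Gorenstein point of $Y$ the divisor $B$ is not Cartier, so the cokernel of multiplication by the section cutting out $S$ need not be the reflexive sheaf $\OOO_S(kB|_S)$, and your clean additivity $h^0(Y,4B)=1+\sum_{k=1}^4 h^0(S,kB|_S)$ breaks down. The remark ``after disposing of a possible fixed component of $|B|$'' also undercuts the adjunction formula you use immediately afterwards. To salvage the strategy you would need either to prove directly that $|B|$ is Cartier and base-point-free in the relevant range (which is close to what you are trying to conclude), or to control the singularities of $S$ and the orbifold corrections much more carefully than a one-line appeal to ``standard bounds'' allows.
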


\begin{proof}
If $Y$ is a $\QQ$-Fano threefold, then the assertion follows from \cite[Theorem~1.2]{P:2013-fano}.
Thus we assume that $Y$ is not $\QQ$-Fano.
Replacing $Y$ with its $\QQ$-factorialization, we may assume that $Y$ is $\QQ$-factorial.
Then $\uprho(Y)>1$.
Run the MMP on $Y$:
\begin{equation}
\label{eq:MMP-Y}
Y=Y^{(0)} \dashrightarrow Y^{(1)} \dashrightarrow \cdots \dashrightarrow Y^{(n-1)} \dashrightarrow Y^{(n)}.
\end{equation} 
Let $\BBB_k$ be the proper transform of the linear system $\BBB_0:=|B|$ on $Y^{(k)}$. 
On each step the relation $-K_{Y^{(k)}}\sim 4 \BBB_k$ is kept.
Moreover, $\dim \BBB_k=\dim |B|\ge 2$. Thus $Y^{(n)}$ has a structure of Mori-Fano fiber space $\varphi: Y^{(n)}\to Z$ such that
for a general fiber $F$ we have $-K_{F}\sim 4 \BBB_n|_F$. This is not possible if $\varphi$ is a del Pezzo or
a rational curve fibration. Hence $Z$ is a point and $Y^{(n)}$ is a $\QQ$-Fano threefold such that 
$\qW(Y^{(n)})$ is divisible by $4$ and $\g(Y^{(n)})\ge \g(Y)> 22$. 
By \cite[Theorem~1.2]{P:2013-fano} we have $Y^{(n)}\simeq \PP^3$.
Let us consider the last step $\psi: Y^{(n-1)} \dashrightarrow Y^{(n)}$ of the MMP.
Since $\uprho(Y^{(n)})=1$, the map $\psi: Y^{(n-1)} \dashrightarrow Y^{(n)}$ must be a divisorial contraction and
since $-K_{Y^{(n-1)}}$ is divisible, $\psi$ cannot be a contraction of a divisor to a curve.
Thus $\psi$ contracts a divisor $E\subset Y^{(n-1)}$ to a point $P\in Y^{(n)}$. 
Since $-K_{Y^{(n-1)}}\sim 4\BBB_{n-1}$ is $\psi$-ample, we see that $P\in \Bs \BBB_{n}$.
Then $\BBB_n$ is subsystem in $|\OOO_{\PP^3}(1)|$ of codimension $1$ and
so $\BBB_n$ has a single base point, say $P$.
Therefore, $\BBB_{n-1}$ has no base points outside $E$. Since $-K_{Y^{(n-1)}}$ is ample on $E$, we see that
$-K_{Y^{(n-1)}}$ is nef. Since $-K_{Y^{(n-1)}}$ is the proper transform of $-K_Y$, it is also big.
Thus $Y^{(n-1)}$ is a weak Fano threefold with terminal singularities and $-K_{Y^{(n-1)}}\sim 4\BBB_{n-1}$. 
According to \cite{Kawakita:smooth} the contraction $\psi$ is the weighted blowup with weights $(1,w_1,w_2)$,
where $\gcd(w_1,w_2)=1$. Then we have 
\begin{equation*}
\B(Y^{(n-1)})=(w_1,w_2),\qquad K_{Y^{(n-1)}}=\psi^*K_{\PP^3}+(w_1+w_2)E.
\end{equation*} 
Here $w_1+w_2$ is divisible by $4$ because so $K_{Y^{(n-1)}}$ is. 
Then
\begin{equation*}
0<(-K_{Y^{(n-1)}})^3=64-\frac{(w_1+w_2)^3}{w_1w_2}.
\end{equation*} 
Up to permutation of $w_1$ and $w_2$ we obtain the following possibilities
for $\left(w_1,w_2, A_{Y^{(n-1)}}^3\right)$:
\[
(1,3, 2/3),\qquad (3,5,7/15),\qquad (5,7,8/35).
\]
Then from the Kawamata-Viehweg vanishing and the orbifold Riemann-Roch formula (see~\eqref{eq:RR:g}) we obtain 
$\g(Y^{(n-1)}) =22, 15, 7$
in these cases, respectively. 
This contradicts our assumption $\g(Y)>22$.
\end{proof}

\begin{scorollary}
\label{cor:q6-7}
Let $X$ be a $\QQ$-Fano threefold with $\qQ(X)\ge6$.
If either $\p_1(X)\ge 2$ or $\qQ(X)\ge 7$ and $\p_2(X)\ge 2$, then $X$ is rational.
\end{scorollary}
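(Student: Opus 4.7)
The plan is to reduce the corollary to parts (3) and (4) of the main Theorem stated in the introduction, which are precisely calibrated for the hypotheses appearing here.

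For the first assertion, I would assume $\qQ(X)\ge 6$ and $\p_1(X)\ge 2$. Then in particular $\qQ(X)\ge 4$, so part (3) of the main Theorem applies verbatim and yields the rationality of $X$.

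For the second assertion, I would assume $\qQ(X)\ge 7$ and $\p_2(X)\ge 2$. Then $\qQ(X)\ge 5$, placing us in the hypothesis of part (4) of the main Theorem. That statement concludes rationality except for the single excluded family \#41422. To dispose of this remaining case I would invoke Proposition~\ref{prop5a}, where the family \#41422 is described in detail and, in particular, its Fano $\QQ$-index is identified as $5$. Under the standing assumption $\qQ(X)\ge 7$, the variety $X$ therefore cannot be of that type, so part (4) of the main Theorem applies and gives the rationality.

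The only substantive point in the argument is the verification that the exceptional type \#41422 has Fano $\QQ$-index equal to $5$; this is the main obstacle, but it is handled entirely by Proposition~\ref{prop5a} (or, equivalently, by inspecting the numerical invariants in \cite{GRD}). Apart from this numerical check, the proof is a direct citation of the main Theorem.
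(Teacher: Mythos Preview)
Your argument is circular. Corollary~\ref{cor:q6-7} is not a consequence of the main Theorem in the introduction; it is one of the building blocks used to \emph{prove} that theorem. Concretely, part~(iii) of the main Theorem is Proposition~\ref{prop4}, whose proof invokes Proposition~\ref{prop5}, and the very first line of the proof of Proposition~\ref{prop5} reads ``By Corollary~\ref{cor:q6-7} we may assume that $\qQ(X)=5$.'' Likewise part~(iv) is Proposition~\ref{prop5a}, which relies on Proposition~\ref{prop6}, which again uses Corollary~\ref{cor:q6-7}. So citing parts~(iii) and~(iv) here assumes what you are trying to prove.

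The paper's actual proof is independent of the later propositions: after reducing to $\qQ(X)\in\{6,7\}$ via Theorem~\ref{thm:q8} and noting $\Clt{X}=0$ by Proposition~\ref{prop:tor}, a computer search of admissible numerical invariants leaves a single case when $\qQ(X)=6$ with $\p_1(X)\ge 2$ and four cases when $\qQ(X)=7$ with $\p_2(X)\ge 2$. Each of these finitely many varieties is then shown to be rational either by Theorem~\ref{thm:QFanoF} or by explicit descriptions as weighted hypersurfaces from \cite{P:2013-fano}, \cite{P:2016:QFano7}, \cite{P:2019:rat:Q-Fano}. That case analysis is the real content of the corollary and cannot be bypassed.
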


\begin{proof}
By Theorem~\ref{thm:q8} we may assume that $\qQ(X)=6$ or $7$.
By Proposition~\ref{prop:tor} 
the group $\Cl(X)$ is torsion free.
If $\qQ(X)=6$ and $\p_1(X)\ge 2$, then computer search (see Sect.~\ref{sect:app}) gives only one possibility: 
$A_X^3=1/5$, $\B(X)=(5)$, \cite[\# 41469]{GRD}.
In this case $X$ is rational by Theorem~\ref{thm:QFanoF}.
Similarly in the case $\qQ(X)=7$ and $\p_2(X)\ge 2$ there are four possibilities:
\begin{itemize} 
\item 
$A_X^3= 1/6$, $\B(X)=(2, 3)$, $\g(X)= 29$, \cite[\# 41492]{GRD}; 
\item
$A_X^3= 1/12$, $\B(X)=(2, 3^2, 4)$, $\g(X)= 14$, \cite[\# 41484]{GRD}; 
\item
$A_X^3= 1/10$, $\B(X)=(2^3, 5)$, $\g(X)= 17$, \cite[\# 41489]{GRD}; 
\item
$A_X^3= 1/15$, $\B(X)=(2^2, 3, 5)$, $\g(X)= 11$, \cite[\# 41481]{GRD}.
\end{itemize}
In the case $A_X^3= 1/6$ we have $X\simeq \PP(1^2,2^2)$ by Theorem~\ref{thm:QFanoF} and 
in the cases $A_X^3= 1/12$ and $A_X^3= 1/10$ there is explicit descriptions of $X$ as a weighted hypersurface
$X_6\subset \PP(1,2,3^2,4)$ and $X_6\subset \PP(1,2^2,3,5)$, respectively (see \cite[Theorem~1.4]{P:2013-fano}
and \cite[Theorem~1.2]{P:2016:QFano7}). It is easy to see that these varieties are rational. 
Rationality of $X$ in the case $A_X^3= 1/15$ was proved in \cite[Proposition~5.1]{P:2019:rat:Q-Fano}.
\end{proof}

\begin{proposition}
\label{prop:tor:new}
Let $X$ be a $\QQ$-Fano threefold with $\qQ(X)\ge 3$ and $N:=|\Clt{X}|>1$. 
Assume either $N\ge 4$
or $\qQ(X)=4$ and $N=3$.
Then $\Clt{X}\simeq \ZZ/N\ZZ$ and $X$ is the quotient $X'/\mumu_N$, where $X'$ is a Fano threefold with terminal singularities
and the action of $\mumu_N$ on $X'$ is free outside a finite number of points.
Moreover, $X$ and $X'$ are described by the following table:
\begin{table}[H]
\rm
\setcounter{NN}{0}
\renewcommand{\arraystretch}{1.2}
\begin{tabularx}{0.95\textwidth}{|l|l|l|l|l|l|l|l|X|}
\hline 
& \multicolumn{1}{c|}{$\qQ(X)$} & \multicolumn{1}{c|}{$\B(X)$} & \multicolumn{1}{c|}{$N$} & \multicolumn{1}{c|}{$A_X^3$} & \multicolumn{1}{c|}{$\g(X)$} & \multicolumn{1}{c|}{$\p_1(X)$} & $\B(X')$ & \multicolumn{1}{c|}{$X'$} 
\\\hline
\nr\label{prop:tor:newA} & $3$ & $(2^2,8^2)$ & $4$ & $1/4$ & $2$ & $1$ & $(2^2)$ & $X'_4\subset \PP(1^3,2^2)$ 
\\
\nr\label{prop:tor:newC} & $3$ & $(5^4)$ & $5$ & $2/5$ & $4$ & $1$ & $\varnothing$ & $Q\subset \PP^4$ 
\\
\nr\label{prop:tor:newB} & $4$ & $(5^4)$ & $5$ & $1/5$ & $5$ & $1$ & $\varnothing$ & $\PP^3$
\\
\nr\label{prop:tor:newD} & $4$ & $(9^2)$ & $3$ & $1/9$ &$3$& $1$& $(3^2)$ & $X'_6\subset \PP(1^2,2, 3^2)$
\\\hline
\end{tabularx}
\end{table}
\noindent
where $X'_4\subset \PP(1^3,2^2)$ is a hypersurface of degree $4$
and $Q\subset \PP^4$ is a smooth quadric.

In particular, $X$ is rational.
\end{proposition}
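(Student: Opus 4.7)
The plan is to adapt the strategy of Proposition~\ref{prop:F-tor4}. First, the computer-assisted enumeration described in Appendix~\ref{sect:app}, combined with Propositions~\ref{prop:tor} and~\ref{prop:qW-neq-qQ} (which dispose of the cases $\qQ(X) \geq 5$ and $\qW(X) \neq \qQ(X)$ respectively), produces exactly the four numerical types $(\qQ(X), \B(X), N, A_X^3, \g(X))$ satisfying the hypothesis. In each case, inspection of the $T$-Hilbert series $\h_X(t,\sigma)$ forces $\Clt{X}$ to be generated by a single element $T$ of order $N$, whence $\Clt{X} \simeq \ZZ/N\ZZ$.

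Next, fix a generator $T \in \Clt{X}$ and form the associated global cyclic cover
\[
\pi: X' := \mathrm{Spec}_X \bigoplus_{i=0}^{N-1} \OOO_X(iT) \longrightarrow X.
\]
Since $T$ is Cartier away from the zero-dimensional singular locus of $X$, the morphism $\pi$ is \'etale in codimension one, $K_{X'} = \pi^*K_X$, and $X'$ is a Fano threefold with terminal singularities on which $\mumu_N$ acts with finite fixed locus. We compute $A_{X'}^3 = N \cdot A_X^3$, and determine $\B(X')$ from how each cyclic quotient point of $X$ lifts: a point of index $r$ coprime to $N$ splits into an orbit of $N$ identical singularities on $X'$, while points whose local class group contains the restriction of $T$ become fixed points of milder (possibly trivial) singularity. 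Matching the resulting numerical invariants against Theorem~\ref{thm:QFanoF} identifies $X'$ in each of the four cases: the quadric $Q \subset \PP^4$ in row~\ref{prop:tor:newC}, $\PP^3$ in row~\ref{prop:tor:newB}, and the weighted hypersurfaces $X'_4 \subset \PP(1^3, 2^2)$ and $X'_6 \subset \PP(1^2, 2, 3^2)$ in rows~\ref{prop:tor:newA} and~\ref{prop:tor:newD}.

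Finally, rationality: each of the four varieties $X'$ is rational by the ``Rat'' column of Theorem~\ref{thm:QFanoF}. The $\mumu_N$-action on $X'$ is induced by a diagonal linear action on the ambient (weighted) projective space, since $-K_{X'}$ is $\mumu_N$-equivariantly ample and the coordinates $x_i$ can be chosen as semi-invariants, as in Proposition~\ref{prop:F-tor4}. The constraint that $\Sing(X) = X'^{\mumu_N}/\mumu_N$ has the prescribed basket $\B(X)$ together with the requirement that the defining equation of $X'$ be semi-invariant pins down the weights of the action up to an obvious change of coordinates, and in each case $X'/\mumu_N$ is then easily seen to be rational by exhibiting an explicit rational parametrization (e.g., projection from a suitable $\mumu_N$-invariant linear subspace).

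The main obstacle is the verification of the basket transformation $\B(X) \leftrightsquigarrow \B(X')$ under the cyclic cover and the subsequent identification of the explicit weights of the $\mumu_N$-action in the two weighted hypersurface cases. The argument parallels Proposition~\ref{prop:F-tor4} but must be repeated for rows~\ref{prop:tor:newA} and~\ref{prop:tor:newD}; in both cases rationality of the quotient follows from a careful but elementary analysis of the normalized equation, analogous to the treatment of $x_3 x_1 + x_2^2 + \phi_4 = 0$ in loc.~cit.
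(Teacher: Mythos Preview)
Your outline matches the paper's proof: computer search yields the four numerical types and cyclicity of $\Clt{X}$, the global $\mumu_N$-cover identifies $X'$ via Theorem~\ref{thm:QFanoF} (using that the Gorenstein index of $X'$ is strictly less than $\qQ(X')$), and rationality of $X=X'/\mumu_N$ is established case by case through explicit analysis of the diagonalized $\mumu_N$-action on the ambient space. One remark: your sentence ``each of the four varieties $X'$ is rational by the Rat column'' is a red herring---rationality of $X'$ says nothing about rationality of the quotient $X'/\mumu_N$---so drop it and go straight to the quotient analysis; the paper handles row~\ref{prop:tor:newB} as toric, row~\ref{prop:tor:newC} by equivariant projection from a fixed point, row~\ref{prop:tor:newA} via a conic-bundle-with-section argument (not a linear projection), and row~\ref{prop:tor:newD} by showing one normal form of the equation admits a birational equivariant projection to $\PP(1^2,2,3)$ while the other is excluded because its $\mumu_3$-quotient fails to be terminal.
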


\begin{proof}
The computer search by the algorithm outlined in Appendix~\ref{sect:app}
produces exactly three possibilities with numerical invariants as in the table.
Then $\Clt{X}\simeq \ZZ/N\ZZ$, where $N=4$ or $5$ and, as in the proof of Proposition~\ref{prop:F-tor4},
we see that the generator of this group
defines a global cyclic cover $X'\to X$ of degree $N$. Thus $X=X'/\mumu_N$.
Here the Gorenstein index of $X'$ is strictly less than $\qQ(X')$. 
Hence $X'$ by Theorem~\ref{thm:QFanoF}\ $X'$ is either a hypersurface of degree $4$ in $\PP(1^3,2^2)$,
of degree $6$ in $\PP(1^2,2, 3^2)$, the projective space 
$\PP^3$, or a quadric $Q\subset \PP^4$.

It remains to show that $X$ is rational. In the case~\ref{prop:tor:newB}
the variety $X$ is toric, so rationality is obvious. 
In the case~\ref{prop:tor:newC} the action of $\mumu_5$ on $Q$ is induced by a 
linear action on $\PP^4$. The projection $Q \dashrightarrow \PP^3$ from a fixed point $P\in Q$
is equivariant, hence $Q/\mumu_5$ is birational to the toric variety $\PP^3/\mumu_5$,
so it is rational.

Consider the case~\ref{prop:tor:newA}.
As above, action of $\mumu_4$ on $X'$ is induced by a liner action on the ambient space $\PP(1^3,2^2)$.
Moreover, we may assume that the coordinates $x_1,x_2,x_3,y_1,y_2$ are semi-invariants with
$\deg x_i=1$, $\deg y_j=2$. The equation of $X'$ can be written as follows
\[
q(y_1,y_2)+y_1\phi_2'(x_1,x_2,x_3)+y_2\phi_2''(x_1,x_2,x_3)+\phi_4(x_1,x_2,x_3) =0,
\]
where $q$, $\phi_2'$, $\phi_2''$, $\phi_4$ are homogeneous polynomials of degree $2$, $2$, $2$, $4$, 
respectively.
Since the line $\Sing(\PP(1^3,2^2))=\{x_1=x_2=x_3=0\}$ is not contained in $X'$, we have $q(y_1,y_2)\neq 0$. 
The projection $\Psi: X' \dashrightarrow \PP(1^3)=\PP^2$
is an equivariant rational map whose fibers are conics in $\PP(1,2^2)=\PP^2$
and whose indeterminacy locus $\operatorname{Ind}(\Psi)=\{q(y_1,y_2)=x_1=x_2=x_3=0\}$
consists of one or two points of index $2$.
These points cannot be switched by the action of $\mumu_4$ because their 
images on $X$ are exactly the points of index $8$.
Hence points in $\operatorname{Ind}(\Psi)=\{q(y_1,y_2)=x_1=x_2=x_3=0\}$ give invariant sections of $\Psi$.
This implies that the rational curve fibration $X=X'/\mumu_4 \dashrightarrow \PP^2/\mumu_4$
has a section, hence $X$ is rational.

Finally, in the case~\ref{prop:tor:newD}, as above, the equation of $X'$ can be reduced to one of the following forms
\begin{eqnarray*}
x_3x_3'&+&a x_2^3 + x_2\phi_4(x_1,x_1')+\phi_6(x_1,x_1') =0,
\\
x_3^2&+&a x_2^3 + x_2\phi_4(x_1,x_1')+\phi_6(x_1,x_1') =0.
\end{eqnarray*}
In the former case the projection to $\PP(1^2,2,3)$ establishes the rationality of $X$.
In the latter case the point $(0,0,0,0,1)$ on $X'$ is a unique non-Gorenstein point
and it is not of type \type{cA/3} \cite{Reid:YPG}. Hence its quotient by $\mumu_3$ cannot be terminal, a contradiction.
\end{proof} 
 
\begin{proposition}
\label{prop:tq5}
Let $X$ be a $\QQ$-Fano threefold with $\Clt{X}\neq 0$.
Assume either 
\begin{enumerate}
\item 
$\qQ(X)=5$, and $\p_2(X)\ge 2$, or
\item
$\qQ(X)=\qW(X)= 3$, $\p_1(X)\ge 2$, and $\dim |A_X|\le 0$.
\end{enumerate}
Then $\Clt{X}\simeq \ZZ/2\ZZ$ and $X$ is the quotient $X'/\mumu_2$, where $X'$ is a Fano threefold with terminal singularities 
and the action of $\mumu_2$ on $X'$ is free outside a finite number of points.
Moreover, $X$ and $X'$ are described by the following table:
\begin{table}[H]
\rm
\setcounter{NN}{0}
\renewcommand{\arraystretch}{1.2}
\begin{tabularx}{0.95\textwidth}{|l|l|l|l|l|l|l|X|}
\hline 
&\multicolumn{1}{c|}{$\qQ(X)$} &\multicolumn{1}{c|}{$\B(X)$} & \multicolumn{1}{c|}{$A_X^3$} & \multicolumn{1}{c|}{$\g(X)$} 
& $\B(X')$ &\multicolumn{1}{c|}{$\g(X')$}& \multicolumn{1}{c|}{$X'$} 
\\\hline
\nr \label{prop:tq5b} &$5$& $(2^2,4,8)$ & $1/8$ & $7$ 
& $(2,4)$ &$16$& $X_6' \subset \PP(1^2,2,3,4)$
\\
\nr \label{prop:tq5a} & $5$& $(2, 4^2,6)$ & $1/6$ & $10$ 
& $(2^2,3)$ &$21$ &$X_4'\subset \PP(1^2,2^2,3)$ 
\\
\nr \label{prop:tq3c} &$3$& $(2^4, 4^2)$&$1/2$&$6$&$( 2^2)$&$14$&$X_4'\subset \PP(1^3,2^2)$
\\\hline
\end{tabularx}
\end{table}
\noindent
In particular, $X$ is rational.
\end{proposition}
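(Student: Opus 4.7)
The plan is to mirror the strategy of Propositions~\ref{prop:F-tor4} and~\ref{prop:tor:new}. First, for the numerical enumeration: under hypothesis~(i), Proposition~\ref{prop:tor} already classifies the $\QQ$-Fano threefolds with $\qQ\ge 5$ and $\Clt{X}\neq 0$, and scanning the coefficients of $t^2$ in the $T$-Hilbert series for an entry $\ge 2$ eliminates every row but rows~\ref{prop:tq5b} and~\ref{prop:tq5a}. Under hypothesis~(ii) we have $\qQ=\qW=3$, which lies outside Proposition~\ref{prop:qW-neq-qQ}, so I would run the algorithm of Appendix~\ref{sect:app} over admissible baskets with $\qQ(X)=3$ and $|\Clt{X}|>1$, retaining those cases with $\max_j\h^0(A_X+jT)\ge 2$ while $\h^0(A_X)\le 1$; I expect this to return only row~\ref{prop:tq3c}.

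Next, in each case the generator $\tau\in\Clt{X}$ defines the global cyclic cover
\[
\pi\colon X'=\operatorname{Spec}_{X}\bigl(\OOO_X\oplus\OOO_X(\tau)\bigr)\longrightarrow X,
\]
which is \'etale in codimension one and produces a Fano threefold $X'$ with terminal singularities on which $\mumu_2$ acts with finite fixed locus and quotient $X$. Combining $(-K_{X'})^3=2(-K_X)^3$ with the lifting of singularities through $\pi$ and with orbifold Riemann--Roch applied to $\pi_*\OOO_{X'}$ pins down $A_{X'}^3$, $\B(X')$ and $\g(X')$. In each of the three cases the resulting $X'$ admits a Cartier divisor $H\qq mA_{X'}$ with $m<\qQ(X')$, so Theorem~\ref{thm:QFanoF} identifies $X'$ with the weighted hypersurface listed in the table.

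With $X'$ realised as a weighted hypersurface, the embedding is canonical and hence $\mumu_2$-equivariant. Exactly as in the proof of Proposition~\ref{prop:F-tor4}, I would choose semi-invariant homogeneous coordinates with eigenvalues $\pm 1$ and pin down the weights of the action by requiring (a) that the fixed locus be finite and (b) that the fixed points on $X'$, combined with their $\mumu_2$-stabilisers, produce precisely the prescribed basket on $X=X'/\mumu_2$. This fixes the normal form of the defining equation of $X'$ up to obvious coordinate changes.

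Rationality of $X$ is the main obstacle. In each case I would exhibit a $\mumu_2$-equivariant rational map $\Psi\colon X'\dashrightarrow S$ to a rational surface whose general fibre is a rational curve, and show that the indeterminacy of $\Psi$ meets $X'$ in points that are $\mumu_2$-invariant---forced to be invariant, rather than swapped, because they must sit under the points of highest index on $X$. Then $X\dashrightarrow S/\mumu_2$ is a rational curve fibration with a section over a rational base, making $X$ rational. For~\ref{prop:tq3c} the projection $\PP(1^3,2^2)\dashrightarrow\PP^2$ onto the weight-one coordinates works exactly as in Proposition~\ref{prop:tor:new}\ref{prop:tor:newA}; for~\ref{prop:tq5b} I would use $\PP(1^2,2,3,4)\dashrightarrow\PP(1^2,2)$, eliminating the top-weight variables; and for~\ref{prop:tq5a} the analogous $\PP(1^2,2^2,3)\dashrightarrow\PP(1^2,2)$, eliminating the weight-three variable. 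Verifying in each case that the indeterminacy really is $\mumu_2$-invariant rather than a swap is the one delicate point that has to be checked by hand, and it is where the basket data and the assignment of eigenvalues from the previous paragraph must be used decisively.
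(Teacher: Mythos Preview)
Your outline is close to the paper's for rows~\ref{prop:tq5b} and~\ref{prop:tq3c}, but there is a genuine gap in row~\ref{prop:tq5a}. You claim Theorem~\ref{thm:QFanoF} identifies $X'$ in every case, but for $\B(X')=(2^2,3)$ and $\qQ(X')=5$ there is \emph{no} $m<5$ with $mA_{X'}$ Cartier: at the index-$2$ points one needs $m$ even, at the index-$3$ point one needs $3\mid m$, so $6\mid m$. Hence Theorem~\ref{thm:QFanoF} simply does not apply, and this case is absent from its table. The paper handles this by a separate, substantial argument: one must first show that the cover $X'$ is actually a $\QQ$-Fano (a priori $\uprho(X')$ could exceed $1$). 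The paper takes a $\QQ$-factorialization, runs the MMP, and rules out both flips (via Mori's bound $-K\cdot C\le 2/3$ on a flipping curve, which conflicts with $-K_Y\cdot C\ge 5/6$ coming from divisibility and the indices in $\B$) and divisorial contractions (by a discrepancy count against Theorem~\ref{thm:K:di}). Only then does \cite{P:2013-fano} identify $X'$ as $X_4'\subset\PP(1^2,2^2,3)$. You should flag this as the crux of the proposition rather than assimilate it to the other two rows.

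On rationality your plan diverges from the paper and is riskier. The paper does not build a rational-curve fibration with a section; instead it observes that the equation of $X'$ must contain a distinguished monomial (e.g.\ $x_4x_2$ in row~\ref{prop:tq5b}, forced because the alternative would give a $\mathrm{cAx}/4$ point whose $\mumu_2$-quotient is not terminal), so the projection dropping the top-weight variable is \emph{birational} and $\mumu_2$-equivariant onto a toric weighted projective space, whence $X=X'/\mumu_2$ is birational to a toric quotient and rational. Your proposed projections to $\PP(1^2,2)$ have fibres that are not obviously rational curves (e.g.\ degree-$6$ curves in a weighted plane), and the ``invariant indeterminacy gives a section'' mechanism from Proposition~\ref{prop:tor:new}\ref{prop:tor:newA} relied on the fibres being conics, which is not the situation here. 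The paper's birational-projection route is both shorter and avoids this issue.
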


\begin{proof}
The numerical invariants of $X$ and its global cover $X'$ are obtained from Proposition~\ref{prop:tor}
in the case $\qQ(X)=5$
and by computer search in the case $\qQ(X)=3$.
Then in the case~\ref{prop:tq5b} we see that $X'$ is a hypersurface $X_6' \subset \PP(1^2,2,3,4)$
by Theorem~\ref{thm:QFanoF}. 
As above, we may assume that the coordinates $x_1,x_1',x_2,x_3,x_4$ are semi-invariants with
$\deg x_i=i$, $\deg x_1'=1$. 
The set of non-Gorenstein points of $X'$ consists of either 
two cyclic quotient singularities $P_1$ and $P_2$ of types $\frac12(1,1,1)$ and $\frac 14(1,1-1)$ or 
one point $P$ of type $\mathrm{cAx/4}$. The latter case does not occur because the quotient $(X'\ni P)/\mumu_2$
must be a terminal singularity. Hence the equation of $X'$ must contain the term $x_4x_2$.
Then the projection $X'\to \PP(1^2,2,3)$ is equivariant and birational. 
Therefore the variety $X=X'/\mumu_2$ is birational to $\PP(1^2,2,3)/\mumu_2$, so it is rational.

Consider the case~\ref{prop:tq5a}. We claim that $X'$ is a $\QQ$-Fano threefold.
Indeed, otherwise, as in the proof of Proposition~\ref{prop:wF4} we can take $\QQ$-factorialization $Y\to X'$
and run the MMP \eqref{eq:MMP-Y}. 
At the end we obtain a $\QQ$-Fano threefold $Y^{(n)}$
such that $\g(Y^{(n)})\ge \g(X)\ge 21$. Hence $Y^{(n)}\simeq \PP(1^3,2)$ or $Y^{(n)}\simeq Y_4\subset \PP(1^2,2^2,3)$ 
by \cite{P:2013-fano}. In both cases the group $\Cl(Y^{(n)})$ is torsion free.
Therefore, the same is true for $\Cl(Y^{(k)})$, $0\le k\le n$.
Consider the first step $\psi: Y \dashrightarrow Y^{(1)}$. 
Assume that $\psi$ is a flip and let $C$ be a component of the flipping locus.
Then $-K_Y\cdot C=5A_Y\cdot C\ge 5 /\operatorname{Index}(Y) =5/6$.
On the other hand, by \cite[(2.3.2)]{Mori:flip} we have $-K_Y\cdot C =1-\sum_P w_{P}(0)$, where 
the sum runs through the set of points lying on $C$ and $w_{P}$ is a local invariant defined in \cite[(2.2.1)]{Mori:flip}.
By definition $w_{P}$ takes values in $\frac1r \ZZ_{>0}$, where $r$ is the index of $P$.
Since $r\in \{2,\, 3\}$ in our case we obtain $-K_Y\cdot C\le 2/3$, a contradiction.
Therefore, $\psi$ is a divisorial contraction. Let $S$ be the exceptional divisor.
Since $K_Y$ is divisible, $\psi(S)$ is not a curve. Thus $Q:=\psi(S)$ is a point. Let $m$ be its index.
Assume that $m>1$.
Since $-K_{Y^{(1)}}\sim 5 \psi_* A_Y$ and $-K_{Y^{(1)}}$ is a (local) generator of the group $\Clt{Y^{(1)}, Q}\simeq \ZZ/m\ZZ$, 
the numbers $m$ and $5$ must be coprime. 
Write $mK_Y\sim \psi^* (mK_{Y^{(1)}})+m\alpha S$, where $\alpha\in \frac1m \ZZ_{>0}$ is the discrepancy of $S$.
Since the group $\Cl(Y^{(1)})$ is torsion free, this implies that $m\alpha$ is divisible by $5$. 
In particular, $\alpha>1/m$. On the other hand, by Theorem~\ref{thm:K:di} there exists an exceptional over $Q\in Y^{(1)}$
divisor $S'$ whose discrepancy equals $1/m$.
Then the discrepancy of $S'$ over $Y$ must be strictly less than $1/m$.
Since $\B(Y)=(2^2,3)$, the only possibility is $m=2$. 
In this situation $-K_{Y^{(1)}}$ is nef and big, and does not contract any divisors.
Therefore, $Y^{(1)}$ is an almost Fano threefold with terminal singularities
of Gorenstein index $\le 2$. 
The anticanonical model $Y^{(1)}_{\mathrm{can}}$ of $Y^{(1)}$
a Fano threefold whose singularities are also of index $\le 2$.
By Theorem~\ref{thm:QFanoF} we have $Y^{(1)}\simeq \PP(1^3,2)$ because 
$\g(Y^{(1)}_{\mathrm{can}})\ge \g(X')\ge 21$ and 
$\PP(1^3,2)$ is $\QQ$-factorial.
But in this case $Y^{(1)}$ has a unique singular point which is of type $\frac12(1,1,1)$ 
and its extraction $\psi$ produces a smooth variety, a contradiction. 
Therefore, $m=1$, i.e. $Q=\psi(S)$ is a Gorenstein point. Then we can apply the above arguments replacing $Y$ with 
$Y^{(1)}$ and get a contradiction again.
Thus $X'$ is a $\QQ$-Fano threefold. Then
$X'$ is a hypersurface of degree $4$ in $\PP(1^2,2^2,3)$ 
by \cite{P:2013-fano}.
As in the case~\ref{prop:tq5b} we see that the projection $X'\to \PP(1^2,2^2)$ is $\mumu_2$-equivariant and birational. 
Hence $X=X'/\mumu_2$ is rational.

Finally in the case~\ref{prop:tq3c} we see that $X'$ is a hypersurface $X_4'\subset \PP(1^3,2^2)$
by Theorem~\ref{thm:QFanoF}.
As above, we may assume that the coordinates $x_1,x_1',x_1'',x_2,x_2'$ are semi-invariants
with eigenvalues $\pm 1$ and $x_1$ is an invariant. Since the fixed point locus if zero-dimensional, 
the $\mumu_2$-action on $\PP(1^3,2^2)$ has weights $(0,0,1;1,1)$. 
Then $X=X'/\mumu_2$ is rational by the arguments similar to that in the case~\ref{prop:tq5b}.
\end{proof}

\section{Sarkisov link}

The following construction will be systematically used throughout the paper.
From now on we adopt the following notation.
\label{sect:sl}
\subsection{}
\label{subsect:SL}
Let $X$ be a non-Gorenstein $\QQ$-Fano threefold of $\QQ$-Fano index $q=\qQ(X)>1$.
Let $\MMM$ be a linear system on $X$ such that $\MMM\qq nA_X$ with $n<q$,
$\dim \MMM>0$, and $\MMM$ has no fixed components.
This $\MMM$ will be chosen at the beginning and fixed throughout this section.
We usually take $\MMM=|nA_X|$ if $\qQ(X)=\qW(X)$.
Let $c:=\operatorname {ct} (X, \MMM)$ be the canonical threshold of the pair $(X, \MMM)$.
We assume that $c\le 1$ (see Lemma~\ref{lemma:ct} below).
According to {\cite[Proposition~2.10]{Corti95:Sark}} (see also {\cite[Claim~4.5.1]{P:G-MMP}})
there exists an extremal blowup $f: \tilde{X} \to X$.
that is crepant with respect to $K_X+c \MMM$.
By our construction $\uprho (\tilde{X})=2$ and $-(K_{\tilde X}+c \tilde \MMM)$ is nef and big.
As in~\cite{Alexeev:ge}, run the log minimal model program on $\tilde{X}$ with respect to $K_{\tilde{X}}+c \tilde\MMM$
(see e.g. \cite[4.2]{Alexeev:ge} or \cite[12.2.1]{P:G-MMP}).
We obtain the following Sarkisov link:
\begin{equation}
\label{diagram-main}
\vcenter{
\xymatrix@C=19pt{
&\tilde{X}\ar@{-->}[rr]^{\chi}\ar[dl]_{f} && \bar{X}\ar[dr]^{\bar f}
\\
X &&&&\hat{X}
} 
}
\end{equation}
where $\chi$ is an isomorphism in codimension $1$,
the variety $ \bar{X}$ also has only terminal $\QQ$-factorial singularities,
$\uprho (\bar{X})=2$, and
$\bar{f}: \bar{X}\to \hat{X}$ is an extremal $K_{\bar{X}}$-negative Mori contraction
which can be either divisorial or fiber type.

\begin{sremark}
\label{rem:SL:choice}
The proof of \cite[Proposition~2.10]{Corti95:Sark} shows that for any zero-dimensional
canonical center $P$ of the pair $(X, c\MMM)$ there exists an extremal blowup $f: \tilde{X} \to X$
as in~\ref{subsect:SL} with center $P$.
Thus in general the link~\eqref{diagram-main} is not determined 
by the choice of $\MMM$. 
\end{sremark}

\subsection{}
In what follows, for a divisor (or a linear system) $D$ on $X$
by $\tilde D$ and $\bar D$ we denote
proper transforms of $D$
on $\tilde{X}$ and $\bar{X}$, respectively.
By $E$ we denote the $f$-exceptional divisor.
For $1\le k<q$, let $\MMM_k$ be a linear system such that $\MMM_k\qq kA_X$.
As in the case $k=n$, we usually take $\MMM_k=|kA_X|$ if $\qQ(X)=\qW(X)$.
By $M_k$ we denote a general member of $\MMM_k$.
We can write
\begin{equation} 
\label{equation-1}
\begin{array}{lll}
K_{\tilde{X}} &\qq & f^*K_X+\alpha E,\qquad \alpha\in \QQ,\ \alpha>0,
\\[2pt]
\tilde\MMM_k&\qq& f^*\MMM_k- \beta_k E,\qquad \beta_k \in \QQ,\ \beta_k \ge 0.
\end{array}
\end{equation}
Then by taking proper transforms on $\bar X$ we obtain
\begin{equation*}
k K_{\bar{X}}+q \bar\MMM_k \qq (k \alpha -q \beta_k) \bar E.
\end{equation*}
Moreover, if $k K_{X}+q \MMM_k\sim 0$ near $f(E)$, 
then $k \alpha -q \beta_k$ is an integer and we have linear equivalence
\begin{equation}
\label{eq:12-divisors}
k K_{\bar{X}}+q \bar\MMM_k \sim (k \alpha -q \beta_k) E.
\end{equation}
In particular, this holds if $\qQ(X)=\qW(X)$.

\begin{slemma}[{\cite[Lemma~4.2]{P:2010:QFano}}]
\label{lemma:ct}
Let $P\in X$ be a point of index $r>1$. Assume that in a neighborhood of $P$ we have
$\MMM\sim -mK_X$, where $0<m<r$. Then
$\ct(X,\MMM)\le 1/m$. 
Therefore,
\[ 
\beta_n\ge m\alpha\quad\text{and}\quad q\beta_n-n\alpha\ge \alpha>0.
\]
\end{slemma}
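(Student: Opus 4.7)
The plan is to exhibit an exceptional divisor $E$ over $P$ whose discrepancy $\alpha$ and multiplicity $\beta:=\mathrm{mult}_E\MMM$ satisfy $\beta\ge m\alpha$; then $\ct(X,\MMM)\le\alpha/\beta\le 1/m$ follows by the definition of the canonical threshold, and the two inequalities in the ``therefore''-clause are elementary consequences.

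First I would invoke Theorem~\ref{thm:K:di} to secure an exceptional divisor $E$ over $P$ with $\alpha=1/r$. For the cyclic quotient case $P\in X\simeq \frac{1}{r}(a,r-a,1)$, Theorem~\ref{thm:K:blowup} identifies $E$ as the exceptional divisor of Kawamata's weighted blowup with weights $\frac{1}{r}(a,r-a,1)$, which makes the multiplicity computation fully explicit. The central step is the lower bound $\beta\ge m/r$. The local class group $\Cl(X,P)\simeq \ZZ/r\ZZ$ satisfies $[K_X]\equiv -\sum a_i\equiv -1\pmod r$, hence $[-mK_X]\equiv m$, and a general member $M\in\MMM$ is cut out near $P$ by a $\mumu_r$-semi-invariant $\phi$ of weight $m$ modulo $r$. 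Every monomial in $\phi$ has weighted degree (with weights $(a,r-a,1)$) congruent to its weight mod $r$, so the weighted order satisfies $v(\phi)\equiv m\pmod r$; since $M$ cannot be trivial at $P$ (as $0<m<r$), we have $v(\phi)>0$, forcing $v(\phi)\ge m$. The standard multiplicity formula for Kawamata's blowup then yields $\mathrm{mult}_E M=v(\phi)/r\ge m/r=m\alpha$, as required. For non-cyclic terminal points (types $\mathrm{cA}/r$, $\mathrm{cAx}/r$, $\mathrm{cD}/r$, $\mathrm{cE}/r$), the same bound is obtained by passing to the index-one cyclic cover $\pi\colon X'\to X$ near $P$, on which $K_{X'}$ is Cartier and $\pi^*\MMM\sim -mK_{X'}$, and running the analogous local Cartier order-of-vanishing estimate $\mumu_r$-equivariantly before descending to $X$.

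The two numerical consequences are then immediate. Since the extremal blowup $f$ in~\ref{subsect:SL} is crepant with respect to $K_X+c\MMM$, one has $c=\alpha/\beta_n$, and the bound $c=\ct(X,\MMM)\le 1/m$ gives $\beta_n\ge m\alpha$. From $m\ge 1$ and $n<q$ we get $qm-n\ge 1$, so $q\beta_n-n\alpha\ge(qm-n)\alpha\ge\alpha>0$. The main obstacle in this approach is a uniform treatment across all terminal singularity types: the weighted-blowup computation is cleanest for $\frac{1}{r}(a,r-a,1)$-points, and for the remaining types one needs either the index-one cover machinery or further input from Kawakita's classification of extremal blowups.
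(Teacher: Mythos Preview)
The paper does not supply its own proof of this lemma; it is stated with a citation to \cite[Lemma~4.2]{P:2010:QFano} and used as a black box throughout. Your argument is correct and is essentially the standard one behind that citation: exhibit the discrepancy-$1/r$ exceptional divisor over $P$, bound the multiplicity of $\MMM$ along it by the semi-invariant weight argument, and conclude $\ct(X,\MMM)\le 1/m$. Your derivation of the two displayed inequalities is also correct: crepancy of $f$ for $K_X+c\MMM$ gives $c=\alpha/\beta_n$, hence $\beta_n\ge m\alpha$, and then $qm-n\ge q-n\ge 1$ yields $q\beta_n-n\alpha\ge (qm-n)\alpha\ge\alpha>0$.

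The one place that is genuinely only sketched is the non-cyclic terminal case. The index-one cover route you propose is workable but awkward, because the cover has Gorenstein (cDV) singularities and you lose the direct weight-vs-index link. The cleaner and more uniform argument is to use the explicit description of every index-$r$ terminal point as a $\mumu_r$-hypersurface quotient in $\mathbb{A}^4$ together with the associated weighted blowup realizing the discrepancy-$1/r$ divisor (Kawamata/Hayakawa); once you are in that setting, the same ``weighted order $\equiv m \pmod r$ and positive, hence $\ge m$'' step you wrote for the cyclic case applies verbatim to the semi-invariant local equation of a general $M\in\MMM$. This closes the gap you flagged without needing the full Kawakita classification.
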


\subsection{}
\label{case-bir}
Assume that the contraction $\bar{f}$ is birational. Then $\hat{X}$ is a $\QQ $-Fano threefold.
In this case, denote by $\bar{F}$ the
$\bar{f}$-exceptional divisor, by
$\tilde F \subset \tilde{X}$ its proper transform, $F:=f(\tilde F)$, and
$\hat{q}:=\qQ (\hat{X})$.
The divisor $\bar E$ is not contracted by $\bar f$, i.e.~$\bar{E}\neq \bar{F}$ 
(see e.g. \cite[Claim~4.6]{P:2010:QFano}).
Let $A_{\hat{X}}$ be a fundamental divisor on $\hat{X}$.
Write
\[
F\qq d A_X,\qquad \hat{E} \qq eA_{\hat{X}},\qquad \hat\MMM_k \qq s_kA_{\hat{X}},
\]
where $d,\, e\in \ZZ_{>0}$,\ $s_k \in \ZZ_{\ge0}$. 

Note that $s_k=0$ if and only if 
$\dim \MMM_k=0$ and the unique element $M_k\in \MMM_k$ 
coincides with the $\bar f$ -exceptional divisor $\bar{F}$.

\begin{slemma}
\label{lemma:sl:torsion} 
If $\Clt{X}=0$, then $\Clt{\hat{X}}\simeq \ZZ/\ZZ_{d/e}$. 
\end{slemma}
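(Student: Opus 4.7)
The plan is to track the Weil divisor class group through each step of the Sarkisov link~\eqref{diagram-main} and read off the torsion from the resulting presentation of $\Cl(\hat X)$.

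First I would use $\Clt{X}=0$ together with $\Pic(X)\simeq\ZZ$ to conclude $\Cl(X)=\ZZ[A_X]$ and $\qQ(X)=\qW(X)$, so every $\qq$-relation on $X$ is a genuine $\sim$-relation. For the divisorial extraction $f:\tilde X\to X$, whose exceptional divisor $E$ is irreducible (as $\uprho(\tilde X/X)=1$), the standard exact sequence
\[
0\longrightarrow\ZZ[E]\longrightarrow\Cl(\tilde X)\longrightarrow\Cl(X)\longrightarrow 0
\]
splits since $\Cl(X)$ is free, yielding $\Cl(\tilde X)=\ZZ[\tilde A_X]\oplus\ZZ[E]$. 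Because $\chi$ is an isomorphism in codimension one, proper transform then identifies $\Cl(\tilde X)=\Cl(\bar X)=\ZZ[\bar A_X]\oplus\ZZ[\bar E]$.

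Next I would apply the analogous sequence for the divisorial contraction $\bar f:\bar X\to\hat X$, whose exceptional divisor $\bar F\neq\bar E$ is irreducible, to obtain
\[
\Cl(\hat X)=\Cl(\bar X)/\ZZ[\bar F].
\]
The key step is to express $[\bar F]$ in the basis above. Since $F=f(\tilde F)\sim dA_X$ on $X$, we have $f_*[\tilde F]=d[A_X]$, so $[\tilde F]-d[\tilde A_X]$ lies in $\ker f_*=\ZZ[E]$; hence there is a unique integer $c$ with $[\bar F]=d[\bar A_X]-c[\bar E]$.

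Finally, Smith normal form on the single relation $(d,-c)$ gives $\Cl(\hat X)\simeq\ZZ\oplus\ZZ/g\ZZ$ with $g:=\gcd(d,c)$, so $\Clt{\hat X}\simeq\ZZ/g\ZZ$. To identify $g$ with $d/e$, I would use the projection onto the torsion-free quotient $\Cl(\hat X)/\Clt{\hat X}\simeq\ZZ\cdot[A_{\hat X}]$: this is, up to sign, the unique surjection $\ZZ^2\to\ZZ$ killing $(d,-c)$, namely $(a,b)\mapsto(ca+db)/g$, and it sends $[\bar E]=(0,1)$ to $d/g$. On the other hand, $\bar f_*[\bar E]=[\hat E]\qq eA_{\hat X}$ shows that $[\bar E]$ maps to $e[A_{\hat X}]$ modulo torsion, so $d/g=e$ and hence $g=d/e$, proving the claim.

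The main obstacle is the bookkeeping: confirming that each identification of class groups (split extension along $f$, proper transform along $\chi$, and quotient along $\bar f$) is valid for honest integer Weil classes rather than merely $\QQ$-classes, and that the integer $c$ above is indeed an integer. All of this rests on the irreducibility of $E$ and $\bar F$ coming from $\uprho(\tilde X/X)=\uprho(\bar X/\hat X)=1$ and on the smallness of $\chi$.
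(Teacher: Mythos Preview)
Your argument is correct and follows essentially the same route as the paper: both track $\Cl$ through the link using the exact sequences for the divisorial maps $f$ and $\bar f$ and the codimension-one isomorphism $\chi$. The paper compresses the computation into the chain
\[
\ZZ/d\ZZ\;\simeq\;\Cl(X)/(F\cdot\ZZ)\;\simeq\;\Cl(\bar X)/(\bar F\cdot\ZZ\oplus\bar E\cdot\ZZ)\;\simeq\;\Cl(\hat X)/(\hat E\cdot\ZZ),
\]
then reads off that $\Clt{\hat X}$ is cyclic (as a subgroup of $\ZZ/d\ZZ$) of order $d/e$, whereas you unpack the same data via Smith normal form on the single relation $[\bar F]=d[\bar A_X]-c[\bar E]$; the content is identical.
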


\begin{proof}
Follows from obvious isomorphisms
\[
\ZZ/d\ZZ\simeq \Cl(X)/(F\cdot \ZZ) \simeq \Cl( \bar{X})/(\bar{F}\cdot \ZZ\oplus\bar{E}\cdot\ZZ)
\simeq \Cl(\hat{X})/(\hat{E}\cdot\ZZ)
\]
and $\Cl(\hat{X})/(\Clt{\hat{X}}\oplus \hat{E}\cdot\ZZ) \simeq \ZZ/e\ZZ$.
\end{proof}

\subsection{}
\label{case-nonbir}
Assume that $\bar{f}$ is a fibration. Then we denote by
$\bar{F}$ a general geometric fiber.
Then $\bar{F}$ is either a smooth rational curve or a del Pezzo surface contained in the smooth part of $\bar X$.
The image of the restriction map $\Cl(\bar{X}) \to \Pic (\bar{F})$ is isomorphic to $\ZZ$.
Let $\Xi$ be its ample generator.
As above, we can write
\[
-K_{\bar{X}} |_{\bar{F}}=-K_{\bar{F}} \sim \hat{q}\Xi,\qquad \bar{E} |_{\bar{F}} \sim e \Xi,\qquad \bar\MMM_k |_{\bar{F}} \sim s_k \Xi,
\]
where $\hat{q},\, e\in \ZZ_{>0}$, and $s_k \in \ZZ_{\ge0}$.

\begin{slemma}
\label{lemma:base-surface}
Assume that $\dim \hat X=2$. Then 
$\hat X$ is a del Pezzo surface with Du Val singularities of type \type{A}, $\uprho(\hat X)=1$, and 
$\Clt{\hat X}\simeq \Clt{\bar X}$.
Furthermore, there is an embedding 
\[
\Clt{\hat X}\subset \Clt{X}. 
\]
In particular, if $\Cl(X)$ is torsion free, then so $\Cl(\hat X)$ is and so $\hat X$ is one of the 
four surfaces described in Lemma~\xref{lemma:DP}.
\end{slemma}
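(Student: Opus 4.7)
The plan is to establish the geometric properties of $\hat X$ (singularities, Picard rank, del Pezzo property) and then analyze the class group via the two birational operations in the link~\eqref{diagram-main}: the extraction $f$ on the left and the fibration $\bar f$ on the right. The Picard rank is immediate: the extremal contraction $\bar f$ drops $\uprho$ by one, so $\uprho(\hat X)=\uprho(\bar X)-1=1$. That $\hat X$ carries only Du Val singularities of type \type{A} I would quote from the Mori--Prokhorov classification of bases of three-dimensional $\QQ$-conic bundles; this is the main non-trivial external input. For del Pezzo-ness I would argue that $-K_{\hat X}$ is ample by combining rational connectedness with $\uprho=1$: since $X$ is $\QQ$-Fano it is rationally connected, $\bar X$ is birational to $X$, and the general fibre of $\bar f$ is $\PP^1$, so $\hat X$ is rationally connected too. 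Thus $\h^0(\hat X, m K_{\hat X})=0$ for every $m\ge 1$, hence $K_{\hat X}$ is not pseudo-effective; as $\hat X$ is $\QQ$-factorial (surface Du Val singularities have finite local class groups) with $\uprho(\hat X)=1$, the class $-K_{\hat X}$ must be ample.

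For the class-group comparisons I would use the exact sequence
\[
0 \longrightarrow \Cl(\hat X) \xrightarrow{\ \bar f^{\,*}\ } \Cl(\bar X) \xrightarrow{\ \mathrm{res}\ } \ZZ\cdot\Xi \longrightarrow 0
\]
attached to $\bar f$, where the right-hand map is restriction of a Weil divisor to a general fibre $\bar F$. Injectivity of $\bar f^{\,*}$ uses the connectedness of fibres; the delicate point, which I regard as the main obstacle, is exactness in the middle for Weil (as opposed to Cartier) divisors: one must show that any class restricting trivially to $\bar F$ is linearly equivalent to a pullback from $\hat X$, which uses both the relative Picard number $\uprho(\bar X/\hat X)=1$ and the $\QQ$-factorial terminal hypothesis on $\bar X$ to control reducible fibres. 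Torsion classes of $\Cl(\bar X)$ are numerically trivial and $\bar F\simeq\PP^1$, so they automatically lie in the kernel of $\mathrm{res}$; once the sequence is in place we conclude $\Clt{\bar X}=\bar f^{\,*}\Clt{\hat X}$, i.e.\ $\Clt{\hat X}\simeq\Clt{\bar X}$.

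To realize $\Clt{\hat X}$ inside $\Clt{X}$ I transport across the left-hand side of the link. The small birational map $\chi$ is an isomorphism in codimension one, so it identifies $\Cl(\bar X)$ with $\Cl(\tilde X)$. The divisorial extremal contraction $f\colon\tilde X\to X$ then produces the short exact sequence
\[
0\longrightarrow \ZZ\cdot E\longrightarrow \Cl(\tilde X)\longrightarrow \Cl(X)\longrightarrow 0,
\]
and because $E\cdot \Sigma<0$ for any curve $\Sigma\subset E$ contracted by $f$, the class $[E]$ has infinite order; hence any torsion element of $\Cl(\tilde X)$ mapping to zero in $\Cl(X)$ is forced to be zero. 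Composing the maps $\Clt{\hat X}\simeq \Clt{\bar X}\simeq \Clt{\tilde X}\hookrightarrow \Clt{X}$ yields the asserted embedding. The last clause is then automatic: if $\Cl(X)$ is torsion free then so is $\Cl(\hat X)$, and Lemma~\xref{lemma:DP} applied to the del Pezzo surface $\hat X$ with $\uprho(\hat X)=1$ and Du Val singularities of type \type{A} produces the four listed isomorphism classes.
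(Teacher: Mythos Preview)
Your proposal is correct and follows essentially the same route as the paper: both cite \cite{MP:cb1} for the \type{A_n} Du Val singularities, deduce ampleness of $-K_{\hat X}$ from uniruledness/rational connectedness together with $\uprho(\hat X)=1$, use the injective pullback $\bar f^*\colon\Cl(\hat X)\to\Cl(\bar X)$ to identify torsion, and then pass through $\chi$ and the pushforward $f_*$ (quotient by $\ZZ\cdot E$) to embed $\Clt{\tilde X}$ into $\Clt{X}$. Your write-up is somewhat more explicit---you spell out the short exact sequence and flag exactness in the middle as the delicate point---whereas the paper simply records the injectivity of $\bar f^*$ and jumps to $\Clt{\hat X}\simeq\Clt{\bar X}$ without further comment.
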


\begin{proof}
By \cite[Theorem~1.2.7]{MP:cb1} the surface $\hat X$ has only 
Du Val singularities of type \type{A_n}.
Since $\uprho(\hat X)=\uprho(\bar X)-1=1$ and $\hat X$ is uniruled, $-K_{\hat X}$ is ample.
Further, since both~$\bar X$ and~$\hat X$ 
have only isolated singularities and $\Pic(\bar X/\hat X)\simeq\mathbb Z$,
there is a well-defined injective map 
\[
\bar f^*\colon \Cl(\hat X) \longrightarrow \Cl(\bar X).
\]
Hence $\Clt{\hat X}\simeq \Clt{\bar X}\simeq \Clt{\tilde X}$.
On the other hand, the push-forward map $f_*: \Cl(\tilde X)\to \Cl(X)$ is 
the quotient by the subgroup $\ZZ\cdot E$, hence $f_*$ is injective on $\Clt{\tilde X}$.
\end{proof}

Regardless of whether $\bar{f}$ is birational or not,
from \eqref{eq:12-divisors} we obtain
\begin{corollary}
\label{cor-eq:main}
In the notation of \xref{case-bir} and \xref {case-nonbir} one has
\begin{equation} 
\label{eq:main}
k \hat{q}=q s_k+(q \beta_k-k \alpha) e,
\end{equation}
where $q \beta_n-n \alpha>0$. If furthermore $\qQ(X)=\qW(X)$, then $q \beta_n-n\alpha$ is a positive integer.
\end{corollary}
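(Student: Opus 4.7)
The content of the corollary is the numerical shadow of \eqref{eq:12-divisors}, so the plan is to derive a single $\QQ$-linear equivalence on $\tilde X$ directly from the two lines of \eqref{equation-1}, transport it across $\chi$ to $\bar X$, and then read off \eqref{eq:main} either by pushing forward to $\hat X$ in the birational case or by restricting to a general fiber in the fibration case. The only invariant input beyond \eqref{equation-1} is $-K_X\qq q A_X$ and $\MMM_k\qq kA_X$.

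Substituting these into \eqref{equation-1} yields
\[
kK_{\tilde X}+q\tilde\MMM_k\qq f^*(kK_X+q\MMM_k)+(k\alpha-q\beta_k)E\qq (k\alpha-q\beta_k)E,
\]
and since $\chi$ is an isomorphism in codimension one, taking proper transforms gives $kK_{\bar X}+q\bar\MMM_k\qq (k\alpha-q\beta_k)\bar E$ on $\bar X$. In case~\xref{case-bir}, $\bar f$ contracts $\bar F$ but not $\bar E$, so $\bar f_*$ applied to this relation yields $kK_{\hat X}+q\hat\MMM_k\qq (k\alpha-q\beta_k)\hat E$ on $\hat X$. Expanding each divisor as a $\QQ$-multiple of $A_{\hat X}$ via $-K_{\hat X}\qq \hat q A_{\hat X}$, $\hat\MMM_k\qq s_kA_{\hat X}$, $\hat E\qq eA_{\hat X}$ reduces the relation to $(qs_k-k\hat q-(k\alpha-q\beta_k)e)\,A_{\hat X}\qq 0$, and the ampleness of $A_{\hat X}$ (infinite order in $\Cl(\hat X)\otimes\QQ$) forces the coefficient to vanish, producing \eqref{eq:main}. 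In case~\xref{case-nonbir}, a general geometric fiber $\bar F$ lies in the smooth locus of $\bar X$ and has trivial normal bundle, so $K_{\bar X}|_{\bar F}=K_{\bar F}$ by adjunction; restricting the same relation to $\bar F$ and writing everything in terms of the ample generator $\Xi$ of the image of $\Cl(\bar X)\to\Pic(\bar F)$ gives the identical coefficient identity.

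For the final clauses, since $f$ is crepant with respect to $K_X+c\MMM$ with $c=\ct(X,\MMM)\le 1$, comparing discrepancies forces $\alpha=c\beta_n$, whence
\[
q\beta_n-n\alpha=\frac{\alpha(q-nc)}{c}\ \ge\ \alpha(q-n)\ >\ 0
\]
because $n<q$. When $\qQ(X)=\qW(X)$, Convention~\ref{conv:def-AXa} promotes the relation $-K_X\qq qA_X$ and the choice $\MMM_n=|nA_X|$ to honest linear equivalences, so the second line of \eqref{equation-1} becomes a genuine linear equivalence and the derivation above produces \eqref{eq:12-divisors} with integer coefficient $k\alpha-q\beta_k$, giving integrality of $q\beta_n-n\alpha$. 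The whole argument is essentially bookkeeping; the only mild point of care is to check that restriction to a general fiber in the fibration case genuinely transports the $\QQ$-equivalence intact, which is precisely where the triviality of the normal bundle and the smoothness of $\bar F$ enter, and this is the one place where the two cases must be handled separately.
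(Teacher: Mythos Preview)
Your proof is correct and follows essentially the same route as the paper: the identity \eqref{eq:main} is read off from the $\QQ$-linear relation $kK_{\bar X}+q\bar\MMM_k\qq(k\alpha-q\beta_k)\bar E$ (which is precisely \eqref{eq:12-divisors}), either by pushing forward to $\hat X$ or by restricting to a general fiber, and integrality in the case $\qQ(X)=\qW(X)$ comes from the fact that $nK_X+qM_n\sim 0$ is then an honest linear equivalence. Your positivity argument, via the crepancy relation $\alpha=c\beta_n$ together with the standing assumption $c\le 1$, is a clean repackaging of what the paper records in Lemma~\ref{lemma:ct}; the paper phrases the same inequality as $q\beta_n-n\alpha\ge\alpha>0$ coming from $\beta_n\ge m\alpha$ at a non-Gorenstein point, but the content is the same.
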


Also from our construction we obtain the following easy corollary.
It shows that most of $\QQ$-Fano threefolds of $\QQ$-Fano index at least two 
are not birationally rigid.
In the case where $X$ is a weighted hypersurface much stronger result was proved in \cite{Abban-Cheltsov-Park:Hyp}.
\begin{corollary}
Let $X$ be a $\QQ$-Fano threefold with $\qQ(X)>1$.
If $\p_n(X)\ge 2$ for some $n<\qQ(X)$, then $X$ is not birationally rigid.
\end{corollary}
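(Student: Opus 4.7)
The plan is to apply the Sarkisov construction of Set-up~\ref{subsect:SL} to a mobile linear system obtained from the hypothesis $\p_n(X)\ge 2$. Since $\p_n(X)\ge 2$, there is a pencil inside the set of effective divisors $D$ with $D\qq nA_X$; let $\MMM$ denote its mobile part. Then $\MMM$ has no fixed components, $\dim\MMM\ge 1$, and $\MMM\qq n'A_X$ for some integer $1\le n'\le n<\qQ(X)=:q$. In the non-Gorenstein setting of this section, $X$ has at least one point of index $r>1$, so Lemma~\ref{lemma:ct} yields $\ct(X,\MMM)\le 1$. Hence all hypotheses of Set-up~\ref{subsect:SL} are satisfied, and the construction produces the Sarkisov diagram~\eqref{diagram-main}.

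This diagram exhibits $X$ as birational to a Mori fiber space $\bar f\colon\bar X\to\hat X$. If $\bar f$ is of fiber type, then $\hat X$ has positive dimension and the new Mori fiber space structure is manifestly not square-equivalent to the original $X\to\mathrm{pt}$, and we are done. If $\bar f$ is birational, then $\hat X$ is a second $\QQ$-Fano threefold; because $\bar E\ne\bar F$ (see~\ref{case-bir}), the image $\hat E=\bar f_*\bar E$ is a nonzero effective divisor, so $e\ge 1$. Combining this with Corollary~\ref{cor-eq:main} applied with $k=n'$, and using the strict positivity $q\beta_{n'}-n'\alpha>0$, one obtains
\[
\frac{s_{n'}}{\hat q}=\frac{n'}{q}-\frac{(q\beta_{n'}-n'\alpha)\,e}{q\,\hat q}<\frac{n'}{q},
\]
so that the Sarkisov degree of the transformed mobile system strictly decreases. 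Since this quantity is preserved under square equivalence of Mori fiber spaces with a mobile system, the new structure is genuinely inequivalent to $X\to\mathrm{pt}$. In either case, $X$ admits in its birational class a Mori fiber space that is not square-equivalent to the original one, so $X$ is not birationally rigid.

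The main subtlety is precisely the divisorial (type~II) case: simply producing a birational map to another $\QQ$-Fano threefold does not on its own contradict rigidity, since $\hat X$ could \emph{a priori} be isomorphic to $X$ as a variety. What closes the argument is the strict drop of the Sarkisov degree above, which is an invariant of the pair (Mori fiber space, mobile system) under square equivalence. This strict drop relies essentially on $e\ge 1$ together with the strict positivity in~\eqref{eq:main}; once these are in hand, the conclusion is formal.
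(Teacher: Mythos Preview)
Your handling of the fibration case is fine, but the divisorial (type~II) case contains a genuine gap. You correctly derive $s_{n'}/\hat q < n'/q$, i.e.\ that the Sarkisov degree of the \emph{transformed} mobile system $\hat\MMM$ on $\hat X$ is strictly smaller than that of $\MMM$ on $X$. You then assert that this forces $\hat X\to\mathrm{pt}$ to be inequivalent to $X\to\mathrm{pt}$ because ``this quantity is preserved under square equivalence''. But the Sarkisov degree is an invariant of a \emph{pair} (Mori fiber space, mobile system), not of the Mori fiber space alone. Even if $\hat X\simeq X$ as $\QQ$-Fano threefolds (which is exactly what birational rigidity would give you), the systems $\MMM$ and $\hat\MMM$ are different: the link $X\dashrightarrow\hat X$ may simply be a nontrivial birational self-map of $X$ carrying $\MMM$ to a system of lower degree. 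That is perfectly compatible with rigidity and does not produce the contradiction you claim.

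The paper closes this gap by a minimality trick: assume $X$ is rigid, choose $n$ \emph{minimal} with $\p_n(X)\ge2$, and run the link with $\MMM\qq nA_X$. Rigidity forces $\hat X\simeq X$ and $\hat q=q$; then \eqref{eq:main} gives $s_n<n$, while $\dim\hat\MMM>0$ yields $\p_{s_n}(X)\ge2$, contradicting the minimality of $n$. Your degree-drop observation can be salvaged in the same spirit by iterating the construction (each step gives a mobile system on $X$ of strictly smaller positive degree, and positive integers are well-ordered), but as written you perform only a single step and draw an unjustified conclusion from it.
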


\begin{proof}
Assume the converse. 
Let $n$ be the minimal positive integer such that $\p_n(X)\ge 2$.
Thus there is a (complete) linear system $\MMM$ such that $\dim \MMM>0$ 
and $\MMM\qq nA_X$. Apply the construction \eqref{diagram-main}.
Since $X$ is birationally rigid, $\hat X\simeq X$ and so $\hat q=q$. 
By \eqref{eq:main} we have $n>s_n$. But then $\dim \hat \MMM>0$ and 
$\hat \MMM\qq s_n A_{\hat X}$. This contradicts our assumption on minimality of $n$.
\end{proof} 

\section{Sarkisov link and rationality}
\begin{lemma}
\label{lemma:q=1}
In the notation of~\xref{subsect:SL} assume that $X$ is not rational.
\begin{enumerate} 
\item \label{lemma:q=1a} 
If the contraction $\bar{f}$ is birational, then either $\hat q\le 6$ or $\hat q=7$ and $s_n\ge 2$. 
\item \label{lemma:q=1b} 
If $\bar{f}$ is a fibration, then $\hat q=1$.
\end{enumerate}
\end{lemma}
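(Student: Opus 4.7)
The plan is to leverage the rationality results of Theorem~\ref{thm:q8} and Corollary~\ref{cor:q6-7} together with the structure of the Sarkisov link~\eqref{diagram-main}.

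For part~\ref{lemma:q=1a}, observe that $\hat X$ is a $\QQ$-Fano threefold birational to $X$ via $\bar f\circ\chi\circ f^{-1}$, hence not rational. Theorem~\ref{thm:q8} then gives $\hat q\le 7$, so only the case $\hat q=7$ with $s_n\le 1$ remains to be excluded. Since $\dim \MMM_n>0$, the remark following~\ref{case-bir} gives $s_n\ne 0$. As $\chi$ is an isomorphism in codimension~$1$ and $\bar f$ is birational with $\bar\MMM_n\ne\bar F$, one has $\dim \hat\MMM_n = \dim \MMM_n \ge 1$, together with $\hat\MMM_n\qq s_n A_{\hat X}$, so $\p_{s_n}(\hat X)\ge 2$. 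If $s_n=1$, then $\p_1(\hat X)\ge 2$, and Corollary~\ref{cor:q6-7} (applicable since $\hat q=7\ge 6$) forces $\hat X$ to be rational, a contradiction. Therefore $s_n\ge 2$.

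For part~\ref{lemma:q=1b}, we argue by contradiction and assume $\hat q\ge 2$. Since $\uprho(\bar X)=2$, we have $\dim\hat X\in\{1,2\}$. Suppose first that $\dim\hat X=2$, so $\bar f$ is a conic bundle with $\bar F\simeq\PP^1$. Writing the ample generator of the image of $\Cl(\bar X)\to\Pic(\PP^1)\simeq\ZZ$ as $\Xi=d\cdot[\mathrm{pt}]$, the relation $-K_{\bar F}\sim \hat q\Xi$ becomes $\hat q\cdot d=2$; the assumption $\hat q=2$ forces $d=1$, so some Weil divisor on $\bar X$ has fiber-degree one and produces a rational section of $\bar f$. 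By Lemma~\ref{lemma:base-surface}, the base $\hat X$ is a Du Val del Pezzo surface of type~\type{A} with $\uprho(\hat X)=1$, which is rational. A conic bundle with rational base admitting a rational section is rational, so $\bar X$, and therefore $X$, is rational, contradicting the hypothesis.

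If instead $\dim \hat X=1$, then $\hat X=\PP^1$ and $\bar F$ is a smooth del Pezzo surface contained in the smooth locus of $\bar X$, with $-K_{\bar F}\sim \hat q\Xi$. The divisor $\bar E$ restricts to a curve of class $e\Xi$ on $\bar F$; I would combine this with the identity~\eqref{eq:main} and the inequality $n<q$ to bound $e$ and $s_k$ enough to exhibit a rational section of $\bar f$ (either directly via $\bar E$ or via a member of an auxiliary $\bar\MMM_k$), whence standard rationality of del Pezzo fibrations with a section over $\PP^1$ gives rationality of $\bar X$ and of $X$. The main obstacle in part~\ref{lemma:q=1b} is the explicit production of such sections; this step requires a careful analysis of the discrepancy $\alpha$ and the coefficients $\beta_k$ using Theorem~\ref{thm:K:blowup} and Lemma~\ref{lemma:discr}, together with the numerical identity~\eqref{eq:main}, to constrain the allowed values of $(\hat q,e,s_n)$ to configurations where a section is visible.
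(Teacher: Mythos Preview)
Your argument for part~\ref{lemma:q=1a} is correct and matches the paper's proof, which also invokes Theorem~\ref{thm:q8} and Corollary~\ref{cor:q6-7}; you have simply spelled out the step ruling out $s_n=1$ in more detail. Your treatment of the surface case in part~\ref{lemma:q=1b} is likewise correct and coincides with the paper's: a Weil divisor restricting to the generator $\Xi$ on the general fibre $\PP^1$ gives a rational section over the rational base.

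The gap is in the curve case of part~\ref{lemma:q=1b}. You propose to manufacture a section of the del Pezzo fibration $\bar f:\bar X\to\PP^1$ by bounding $e$ and the $s_k$ via \eqref{eq:main}, Theorem~\ref{thm:K:blowup}, and Lemma~\ref{lemma:discr}, but you do not carry this out, and it is not clear this route succeeds without substantial further work. The paper avoids sections entirely here. The point is that $\hat q\ge 2$ forces the smooth del Pezzo fibre $\bar F$ to have $-K_{\bar F}$ divisible (by $\hat q$) in $\Pic(\bar F)$, which pins $\bar F$ down to $\PP^2$ or a smooth quadric. Thus $\bar f$ is generically a $\PP^2$-bundle or a quadric bundle over $\PP^1$; over a $C_1$ base such fibrations are rational (Tsen's theorem trivialises the Brauer obstruction, respectively supplies a rational point on the generic quadric), so $\bar X$ and hence $X$ is rational. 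This is a one-line structural observation, and it replaces the numerical analysis you outline.
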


\begin{proof}
The assertion~\ref{lemma:q=1a} follows from Theorem~\ref{thm:q8} and Corollary~\ref{cor:q6-7}. 
To prove~\ref{lemma:q=1b} assume that $\hat q>1$.
If $\hat X$ is a curve, then $\hat X\simeq \PP^1$ and $\bar{F}$ is a smooth del Pezzo surface
with divisible canonical class. Thus $\bar f$ is either a generically $\PP^2$-bundle 
or quadric bundle. Then $\bar f$ must be locally trivial in Zariski topology, hence 
$\bar X$ is rational in this case. Now assume that $\dim \hat X=2$.
Let $L$ be an effective Weil divisor on $\hat X$ such that $L\sim A_{\hat X}$ 
and let 
$B$ be a Weil divisor on $\bar X$ whose image is $\Xi$ (see \ref{case-nonbir}). Then $B$ is a section 
of $\bar f$ 
over a Zariski open subset in $\hat X$. Since the general fiber is a smooth rational curve and $\hat X$ is rational,
the variety $\bar X$ is rational as well.
\end{proof} 

\begin{lemma}
\label{lemma:hat-q}
In the notation of \xref{subsect:SL} assume that $\qW(X)=\qQ(X)$, $\p_1(X)\ge 2$, $s_1=0$, and $X$ is not rational.
Then $\hat q=1$ and there is an embedding $\Clt{\hat X}\subset \Clt{X}$. 
\begin{enumerate} 
\item 
If $\Clt{\hat X}=0$, then $\hat X\simeq \PP^1$, $\PP^2$, or $\PP(1,1,2)$. 
\item
If $\Clt{\hat X}\neq 0$, then $\hat X$ is a del Pezzo surface of degree $1$ and $|\Clt{\hat X}|\ge 3$. 
\end{enumerate}
The numbers $\p_i(X)$ satisfy the following conditions:
\begin{table}[H]
\setcounter{NN}{0}
\renewcommand{\arraystretch}{1.2}
\rm
\begin{tabularx}{0.8\textwidth}{|l|X|l|l|l|}
\hline 
&\multicolumn{1}{c|}{$\hat X$} & $\p_1(X)$ & $\p_2(X)$ & $\p_3(X)$ 
\\\hline
\nr &$\hat X\simeq \PP^1$ & $2$ & $\ge 3$ & $\ge 4$ \\ 
\nr & $\hat X\simeq \PP^2$ & $3$ & $\ge 6$ & $\ge 10$ \\ 
\nr & $\hat X\simeq \PP(1,1,2)$ & $2$ & $\ge 4$ & $\ge 6$ \\
\nr &$\hat X$ is a del Pezzo surface of degree $1$& $2$ &$\ge 4$ & $\ge 7$
\\\hline 
\end{tabularx} 
\end{table} 
\noindent
Furthermore, assume that $\Clt{\hat X}=0$.
If $s_2=0$ \textup(resp., $s_3=0$\textup), then equalities hold for $\p_2(X)$ \textup(resp., for $\p_3(X)$\textup).
\end{lemma}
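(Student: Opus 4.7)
I will first show that $\bar f$ cannot be birational. The hypothesis $\qW(X)=\qQ(X)$ means $A_X$ is a well-defined class and $|A_X|$ is complete, while $\p_1(X)\ge 2$ produces a subsystem $\MMM_1\subseteq|A_X|$ with $\dim\MMM_1\ge 1$. If $\bar f$ were birational, then the characterization of $s_k=0$ given right after~\ref{case-bir} would force $\dim\MMM_1=0$ from $s_1=0$, contradicting $\dim\MMM_1\ge 1$. Thus $\bar f$ is a fibration, and Lemma~\ref{lemma:q=1}\ref{lemma:q=1b} gives $\hat q=1$. The inclusion $\Clt{\hat X}\subset\Clt{X}$ is trivial when $\dim\hat X=1$ (then $\hat X\simeq\PP^1$ since $\bar X$ is uniruled), and is furnished by Lemma~\ref{lemma:base-surface} when $\dim\hat X=2$; the latter lemma also shows $\hat X$ is a del Pezzo surface with only Du Val singularities of type~$\type{A_n}$ and Picard rank one. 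If $\Clt{\hat X}=0$, Lemma~\ref{lemma:DP} restricts $\hat X$ to $\PP^2$, $\PP(1^2,2)$, $\PP(1,2,3)$, or $S_6\subset\PP(1,2,3,5)$.

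The main technical step---and the principal obstacle---is the identity $\p_k(X)=\p_k(\hat X)$ whenever $s_k=0$. The argument runs as follows: the condition $s_k=0$ says $\bar\MMM_k|_{\bar F}\sim 0$, so a general $\bar M_k$ is disjoint from the general fiber and therefore vertical over $\hat X$. Every vertical prime Weil divisor on $\bar X$ is a $\bar f$-pullback, so $\bar\MMM_k=\bar f^{*}\hat{\mathscr{N}}_k$ for some linear system $\hat{\mathscr{N}}_k$ on $\hat X$. Combining $\hat q=1$ with Corollary~\ref{cor-eq:main} (which here reads $k=(q\beta_k-k\alpha)e$) and the injectivity of $\bar f^{*}\colon\Cl(\hat X)_\QQ\hookrightarrow\Cl(\bar X)_\QQ$ (from the proof of Lemma~\ref{lemma:base-surface}), one identifies $[\hat{\mathscr{N}}_k]=kA_{\hat X}$, hence $\hat{\mathscr{N}}_k\subseteq|kA_{\hat X}|$. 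Since $\bar f_{*}\OOO_{\bar X}=\OOO_{\hat X}$ and $\chi$ is an isomorphism in codimension one, the reverse inequality comes from pulling back sections, yielding
\begin{equation*}
\p_k(X)=\h^0\bigl(\bar X,\bar f^{*}(kA_{\hat X})\bigr)=\h^0(\hat X,kA_{\hat X})=\p_k(\hat X).
\end{equation*}
Applied to $k=1$, this gives $\p_1(X)=\p_1(\hat X)\ge 2$, which excludes $\PP(1,2,3)$ and $S_6$ (both have $\p_1=1$ by Lemma~\ref{lemma:DP}) and leaves exactly the four cases of the table in the torsion-free setting.

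For the lower bounds $\p_k(X)\ge\p_k(\hat X)$ in general (without assuming $s_k=0$), the pullback of sections still yields an injection $H^0(\hat X,kA_{\hat X})\hookrightarrow H^0(X,kA_X)$ via $\bar f^{*}$, $\chi$, and the birational contraction $f$. The explicit numerical entries of the table come from the Hilbert series in Lemma~\ref{lemma:DP} for the surface cases, from $\h^0(\PP^1,\OOO(k))=k+1$ in the curve case, and from the orbifold Riemann--Roch formula for a Du Val del Pezzo of degree~$1$. The equalities asserted for $\p_2$ and $\p_3$ under the supplementary hypotheses $s_2=0$ or $s_3=0$ are then instances of the identity proven in the previous paragraph. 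Finally, in the torsion case $\Clt{\hat X}\ne 0$, the condition $\p_1(\hat X)\ge 2$ together with the classification of Du Val del Pezzo surfaces with $\uprho=1$ forces $\hat X$ to have degree~$1$, and the sharpened lower bound $|\Clt{\hat X}|\ge 3$ follows from the known catalogue of such surfaces with non-trivial torsion in the class group combined with the embedding $\Clt{\hat X}\subset\Clt{X}$.
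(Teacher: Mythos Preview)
Your argument is correct and follows essentially the same route as the paper: rule out the birational case via $s_1=0$ and $\dim\MMM_1\ge1$, invoke Lemma~\ref{lemma:q=1} for $\hat q=1$, use Lemma~\ref{lemma:base-surface} for the torsion embedding and the del Pezzo structure, and then compare $\dim|kA_X|$ with $\dim|kA_{\hat X}|$ via pullback/pushforward of linear systems. The numerical entries and the equality clause when $s_k=0$ are handled the same way.

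Two places deserve tightening. First, your identification $[\hat{\mathscr{N}}_k]=kA_{\hat X}$ is carried out in $\Cl(\hat X)_\QQ$, so it only yields $\hat{\mathscr{N}}_k\qq kA_{\hat X}$; the passage to $\hat{\mathscr{N}}_k\subseteq|kA_{\hat X}|$ needs $\Clt{\hat X}=0$, which you use but do not flag. The paper sidesteps this by \emph{defining} $A_{\hat X}$ through the relation $\bar\MMM=\bar f^*|A_{\hat X}|$, so that $\dim|A_{\hat X}|=\dim\MMM\ge1$ holds automatically regardless of torsion. Second, in the torsion case you assert ``$\p_1(\hat X)\ge2$ forces degree~$1$'' by appeal to a vague classification; the paper does this cleanly via Lemma~\ref{lemma:DP:0}\ref{lemma:DP:0-2}: any $D\qq A_{\hat X}$ with $\dim|D|\ge1$ forces $K_{\hat X}^2=1$ and $D\sim-K_{\hat X}$. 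Your mention of the embedding $\Clt{\hat X}\subset\Clt{X}$ in connection with the bound $|\Clt{\hat X}|\ge3$ is a red herring---that bound comes purely from the Miyanishi--Zhang classification of rank-one degree-$1$ Du Val del Pezzo surfaces with only type~\type{A} singularities.
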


\begin{proof}
We have $\hat q=1$ by Lemma~\ref{lemma:q=1}.
For short, we consider only the case where $\hat X$ is a surface. 
The case where $\hat X$ is a curve is much easier.
By Lemma~\ref{lemma:base-surface} we have $\Clt{\hat X}\subset \Clt{X}$ and $\hat X$ is a del Pezzo surface with Du Val singularities of type \type{A} and $\uprho(\hat X)=1$.
The pull-back map $\bar f^*$ of Weil divisors is well-defined and injective
(see the proof of Lemma~\ref{lemma:base-surface}).
Hence $\bar \MMM=\bar f^* |A_{\hat{X}}|$ for a primitive element $A_{\hat{X}}\in \Cl(\hat X)$
and $\dim |A_{\hat{X}}|=\dim \MMM=\dim |A_X|\ge 1$. If $K_{\hat X}^2>6$, then 
$\hat X$ is either $\PP^2$ or $\PP(1^2,2)$ (see e.g. \cite[Theorem~3.4]{Hidaka-Watanabe})
and $\p_1(X)=3$ or $2$ in these cases, respectively. Let
$K_{\hat X}^2\le 6$. Then $K_{\hat X}^2=1$ and $A_{\hat{X}}\sim -K_{\hat X}$
by Lemma~\ref{lemma:DP:0}. Hence $\p_1(X)=\dim |\bar \MMM|+1=2$.
Finally, $|\Clt{\hat X}|\ge 3$ by the classification \cite{Miyanishi-Zhang:88}. 

Note that $k \bar \MMM\sim \bar f^*(kA_{\hat{X}})$ for any $k$, hence the linear system 
$f_* \chi^{-1}_* | \bar f^*(kA_{\hat{X}})|$ is contained in $|kA_X|=\MMM_k$.
This implies that $\dim |kA_{\hat{X}}|\le \dim |kA_X|$. Then the inequalities in the table follow from 
Lemma~\ref{lemma:DP} and the Riemann-Roch for $-kK_{\hat X}$ in the case $K_{\hat X}^2=1$.
Finally, if $s_k=0$ and $\Clt{\hat X}=0$, then the linear system $\bar \MMM_k$ is $\bar f$-vertical, that is, 
$\bar \MMM_k=\bar f^* (kA_{\hat{X}})$ and so $\dim |kA_{\hat{X}}|= \dim |kA_X|$.
Hence, the inequalities in the table are in fact equalities in this case.
\end{proof}

\section{Case $\p_1(X)\ge 2$.}
\begin{setup}
\label{constr1}
Let $X$ be a non-rational $\QQ$-Fano threefold with $\qQ(X)=\qW(X)>1$ and $\p_1(X)\ge 2$.
We assume that $X$ is has at least one non-Gorenstein point.
This holds automatically if $\qQ(X)\ge 3$ because $X$ is not rational. 
The linear system
$|A_X|$ has no fixed components.
Apply the construction \eqref{diagram-main} with $n=1$, i.e. $\MMM=|A_X|$.
The relation \eqref{eq:main} for $k=1$ has the form
\begin{equation}
\label{equation-main:k=1}
\hat q=q s_1+(q\beta_1-\alpha)e,
\end{equation}
where $q\beta_1-\alpha$ is a positive integer by Corollary~\ref{cor-eq:main}.
Taking Lemma~\ref{lemma:q=1} into account we obtain two possibilities:
\subsubsection{Case $s_1>0$} 
\label{constr1a}
Then $\bar f$ is birational and $\hat q \ge q+1$.
\subsubsection{Case $s_1=0$} 
\label{constr1b}
Then $\bar f$ is a fibration and $\hat q=e=q\beta_1-\alpha=1$.
\end{setup} 

\begin{proposition}
\label{prop5}
Let $X$ be a $\QQ$-Fano threefold with $\qQ(X)\ge 5$. If $\p_1(X)\ge 2$, then $X$ is rational.
\end{proposition}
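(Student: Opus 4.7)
The plan is to reduce to the case $q:=\qQ(X)=5$ using the previously established results and then analyze the Sarkisov link of Setup~\ref{constr1}. By Theorem~\ref{thm:q8} we may assume $q\le 7$, so $q\in\{5,6,7\}$. Proposition~\ref{prop:tor} lists every $\QQ$-Fano threefold with $q\ge 5$ and $\Clt{X}\ne 0$, and in all entries $\p_1(X)=1$; hence under $\p_1(X)\ge 2$ the class group $\Cl(X)$ is torsion free, and Theorem~\ref{thm:index}\ref{thm:indexb} gives $\qW(X)=\qQ(X)=q$. For $q\in\{6,7\}$, Corollary~\ref{cor:q6-7} already yields rationality, so it remains to treat $q=5$.

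Since $\p_1(X)\ge 2$ and $\Cl(X)\simeq\ZZ$ is generated by $A_X$, the linear system $\MMM:=|A_X|$ has no fixed components, and we apply Setup~\ref{constr1} with $n=1$. Corollary~\ref{cor-eq:main} gives the integer identity
\[
\hat q \;=\; 5\,s_1+(5\beta_1-\alpha)\,e, \qquad 5\beta_1-\alpha\in\ZZ_{>0},\ e\ge 1.
\]
In the birational case $s_1\ge 1$ (Setup~\ref{constr1a}), the target $\hat X$ is a $\QQ$-Fano threefold birational to $X$ with $\hat q\ge 5s_1+1$. If $s_1\ge 2$, then $\hat q\ge 11\ge 8$ and $\hat X$ is rational by Theorem~\ref{thm:q8}. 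If $s_1=1$, then $\hat\MMM:=\bar f_*\bar\MMM$ satisfies $\hat\MMM\qq A_{\hat X}$ with $\dim\hat\MMM=\dim\MMM=\p_1(X)-1\ge 1$, so $\p_1(\hat X)\ge 2$ and Corollary~\ref{cor:q6-7} yields rationality of $\hat X$. In either sub-case $X$ is rational.

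It remains to treat the fibration case $s_1=0$ (Setup~\ref{constr1b}), where the identity forces $\hat q=e=5\beta_1-\alpha=1$. Assuming for contradiction that $X$ is not rational, Lemma~\ref{lemma:hat-q} restricts the base $\hat X$ to $\PP^1$, $\PP^2$, $\PP(1,1,2)$, or a Du Val del Pezzo surface of degree $1$ with $|\Clt{\hat X}|\ge 3$, and provides corresponding lower bounds on $\p_1(X),\p_2(X),\p_3(X)$. Matching these bounds against the orbifold Riemann--Roch computation of $\h_X(t)$ and invoking the computer enumeration outlined in Appendix~\ref{sect:app} shrinks the list of compatible numerical types $(A_X^3,\B(X),\g(X))$ for $q=5$ to a short catalogue. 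For each surviving entry one exhibits an explicit birational model that implies rationality: either a weighted-hypersurface presentation of the type appearing in Theorem~\ref{thm:QFanoF}, or a suitable subsystem of $|kA_X|$ for small $k$ that cuts out a rational section of $\bar f$ and rigidifies the fibration over its rational base.

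The principal obstacle is the fibration case: both the enumeration of numerical types and the construction of a birational trivialisation for each surviving type require the computer-assisted analysis of Appendix~\ref{sect:app}, combined with the geometric input coming from the lower bounds on $\p_k(X)$ supplied by Lemma~\ref{lemma:hat-q}.
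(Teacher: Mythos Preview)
Your reduction to $q=5$ and the handling of the birational case $s_1>0$ are fine and essentially match the paper. The gap is in the fibration case $s_1=0$.

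The paper does not try to enumerate surviving numerical types from the $\p_k$ bounds of Lemma~\ref{lemma:hat-q} and then prove rationality case by case; instead it derives a contradiction, and the key tool is Lemma~\ref{lemma:ct}, which you never invoke. From $5\beta_1=\alpha+1$ together with $\beta_1\ge m\alpha$ (where $m$ is chosen so that $5m\equiv 1\bmod r$ at a non-Gorenstein point of index $r$) one obtains $\alpha\le 1/(5m-1)$. Because $X$ is assumed non-rational, Theorem~\ref{thm:QFanoF} guarantees a point with $r\notin\{2,4\}$, hence $m\ge 2$ and $\alpha\le 1/9$. This forces $f(E)$ to be a point of index $\ge 9$, and \emph{that} constraint on the basket is what collapses the computer search to the single candidate $\B(X)=(2,9)$, $A_X^3=5/18$. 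For this candidate $\alpha=\beta_1=1/9$, and \eqref{eq:main} then gives $s_2=s_3=0$; but the actual values $\p_2(X)=4$, $\p_3(X)=7$ are incompatible with any of the rows of Lemma~\ref{lemma:hat-q}, finishing the contradiction.

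Your proposed endgame does not work as stated. The bounds supplied by Lemma~\ref{lemma:hat-q} are only $\p_1(X)\in\{2,3\}$ plus \emph{lower} bounds on $\p_2,\p_3$, which impose no useful restriction on $\B(X)$; without the index bound on $f(E)$ the list does not shrink. And the two devices you suggest for the survivors are not available in general: Theorem~\ref{thm:QFanoF} applies only when some $mA_X$ with $m<5$ is Cartier, while ``a subsystem of $|kA_X|$ cutting out a rational section of $\bar f$'' is not a procedure one can carry out from the numerics alone. (A minor point: since you have already established $\Clt{X}=0$, the degree-$1$ del Pezzo base with $|\Clt{\hat X}|\ge 3$ is excluded by Lemma~\ref{lemma:hat-q} and need not be listed.) The missing idea is precisely the canonical-threshold bound from Lemma~\ref{lemma:ct} controlling the index of $f(E)$.
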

\begin{proof}
By Corollary~\ref{cor:q6-7} we may assume that
$\qQ(X)=5$. 
By Proposition~\ref{prop:tor} the group $\Cl(X)$ is torsion free.
Apply the construction \eqref{diagram-main} with $\MMM=|A_X|$ (see~\ref{constr1}).
Assume that $X$ is not rational. 

If $s_1>0$, then $\hat q \ge 6$ (see~\ref{constr1a}).
By Theorem~\ref{thm:q8} and Corollary~\ref{cor:q6-7} we have $s_1\ge 2$.
Hence $\hat q \ge 11$ and $X$ is rational by Theorem~\ref{thm:q8}, a contradiction.

Therefore, $s_1=0$, $\bar f$ is fibration, $\hat q=e=1$, and $5\beta_1=\alpha+1$ 
(see~\ref{constr1b}).
Let $P\in X$ be any point of index $r>1$. 
Since $\qQ(X)=\qW(X)$, the numbers $r$ and $5$ are coprime.
Take $m\in \ZZ_{>0}$ so that $5m\equiv 1\mod r$.
Then $m(-K_X)\sim \MMM$ near $P$ and so $\beta_1\ge m\alpha$ by Lemma~\ref{lemma:ct}.
We obtain $\alpha+1=5\beta_1\ge 5m\alpha$ and $\alpha\le \frac 1{5m-1}$.
Since $X$ is not rational, we can take $P$ so that $r\notin \{2,4\}$ by Theorem~\ref{thm:QFanoF}.
Then $5\not\equiv 1\mod r$, hence $m\ge 2$ and so $\alpha\le 1/9$.
This implies that $f(E)$ is a point of index $\ge 9$.
In this case computer search (see Sect.~\ref{sect:app}) gives the only possibility \cite[\# 41446]{GRD}:

\begin{quote}
$\B(X)=(2,9)$, \quad $A_X^3=5/18$, \quad $\p_1(X)=2$, \quad $\p_2(X)=4$, \quad $\p_3(X)=7$.
\end{quote}
Then $\alpha=\beta_1=1/9$ because $\B(X)=(2,9)$.
Recall that $e=1$. 
Then one can show that \eqref{eq:main} implies $s_2=s_3=0$.
This contradicts Lemma~\ref{lemma:hat-q}.
\end{proof}

\begin{proposition}
\label{prop4}
Let $X$ be a $\QQ$-Fano threefold with $\qQ(X)\ge 4$. If $\p_1(X)\ge 2$, then $X$ is rational.
\end{proposition}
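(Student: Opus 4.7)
The plan is to parallel the proof of Proposition~\ref{prop5} closely, reducing all the way down to $\qQ(X)=4$ and then running the Sarkisov-link analysis. By Proposition~\ref{prop5} I may assume $\qQ(X)=4$; by Proposition~\ref{prop:F-tor4}, if $\qW(X)\neq\qQ(X)$ then $X$ is already rational, so I may further assume $\qW(X)=\qQ(X)=4$, which by Theorem~\ref{thm:index}(ii) forces the global Gorenstein index $r$ to be odd. Torsion in $\Cl(X)$ of order $\ge 3$ is handled by Proposition~\ref{prop:tor:new}, while any residual $2$-torsion can be excluded by a direct analysis of the associated double cover in the spirit of Propositions~\ref{prop:F-tor4} and~\ref{prop:tq5}; thus I may assume $\Cl(X)$ is torsion-free. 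Suppose for contradiction that $X$ is not rational.

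Next I apply the Sarkisov link of~\xref{subsect:SL} with $\MMM=|A_X|$, i.e.\ $n=1$. Equation~\eqref{equation-main:k=1} becomes $\hat q=4s_1+(4\beta_1-\alpha)e$ with $4\beta_1-\alpha\in\ZZ_{>0}$. If $s_1=1$, then $\bar\MMM\qq A_{\hat X}$ yields $\p_1(\hat X)\ge 2$ and $\hat q\ge 5$, so $\hat X$ is rational by Proposition~\ref{prop5}; if $s_1\ge 2$, then $\hat q\ge 8$ and Theorem~\ref{thm:q8} applies directly. Either case contradicts non-rationality of $X$. Hence $s_1=0$, $\bar f$ is a fibration, and by Lemma~\ref{lemma:q=1}(ii) together with~\xref{constr1b} we obtain $\hat q=e=1$ and $4\beta_1=\alpha+1$.

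Now I constrain the center $P=f(E)$ of the blowup. If every non-Gorenstein point of $X$ has index $3$, then $3A_X$ is Cartier with $3<\qQ(X)$, and Theorem~\ref{thm:QFanoF} places $X$ in a rational family, a contradiction. So $X$ must possess at least one singular point of odd index $r\ge 5$; by Remark~\ref{rem:SL:choice} I choose $P$ to be such a point. Let $m$ be the smallest positive integer with $4m\equiv 1\pmod r$. Lemma~\ref{lemma:ct} gives $\beta_1\ge m\alpha$, so $\alpha\le 1/(4m-1)$; combined with $\alpha=a/r$ for a small positive integer $a$ provided by Theorem~\ref{thm:K:blowup} and Lemma~\ref{lemma:discr}, only a very short list of admissible $(r,m,\alpha)$-triples survives.

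The final step is to run the computer enumeration of Appendix~\ref{sect:app} listing the baskets $\B(X)$, cubes $A_X^3$, and genera $\g(X)$ consistent with $\qQ=\qW=4$, $\Cl(X)$ torsion-free, $\p_1(X)\ge 2$, and at least one singular point of index $\ge 5$. For each surviving candidate I would read off $s_2$ and $s_3$ from~\eqref{eq:main} with $\hat q=e=1$ fixed, and confront them with Lemma~\ref{lemma:hat-q}: the lower bounds on $\p_k(X)$ attached to each of the four possible shapes of $\hat X$ (namely $\PP^1$, $\PP^2$, $\PP(1^2,2)$, and a degree-$1$ del Pezzo with $|\Clt{\hat X}|\ge 3$) should clash with the orbifold Riemann--Roch data of $X$. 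The main obstacle I anticipate is precisely this last step: guaranteeing that the computer list is exhaustive and that in every surviving case the numerics of Lemma~\ref{lemma:hat-q} force a strict contradiction, rather than merely a tight compatibility requiring additional variety-by-variety geometric input, as happens in Proposition~\ref{prop5} for the $\B(X)=(2,9)$ case.
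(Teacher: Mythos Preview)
Your outline tracks the paper through the reduction to $\qQ=\qW=4$, the link with $\MMM=|A_X|$, and the elimination of $s_1>0$. (One simplification: once $\qW=\qQ=4$ forces every local index to be odd, any $2$-torsion element of $\Cl(X)$ is locally principal at every point, hence Cartier, hence trivial since $\Pic(X)\simeq\ZZ$; no separate double-cover analysis is needed.) The genuine gap is your use of Remark~\ref{rem:SL:choice} to place $f(E)$ at an \emph{arbitrary} point of index $\ge 5$. That remark only lets you select among the canonical centers of $(X,c\MMM)$, and a point whose local threshold strictly exceeds the global $c$ is not one of them. The paper instead takes $r$ to be the \emph{maximal} index and argues numerically: from $\alpha+1=4\beta_1\ge 4m\alpha$ and $\alpha\ge 1/r$ one gets $r\ge 4m-1$, while the explicit value of $m$ (namely $(r+1)/4$ or $(3r+1)/4$ according to $r\bmod 4$) gives $4m-1\ge r$; hence $\alpha=1/r$ exactly, which \emph{forces} $f(E)$ to sit at a point of maximal index with no choice involved. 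This equality also yields $r=4m-1$, i.e.\ $r\equiv -1\pmod 4$, and the same estimate applied at any point of index $r'\equiv 1\pmod 4$ gives $3r'\le r$. These constraints, together with $r\ge 7$, are precisely what cut the computer search down to the five baskets $(7)$, $(3,7)$, $(7^2)$, $(9)$, $(3,11)$; without them your list would be much longer.

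The endgame you feared is then short and mechanical. Since all indices are odd, $2A_X$ is not Cartier at $f(E)$, so $\beta_2>0$; then \eqref{eq:main} with $\hat q=e=1$ forces $s_2=0$, and Lemma~\ref{lemma:hat-q} (with $\Clt{\hat X}=0$) immediately eliminates $(7)$ and $(9)$ via the $\p_2$ column. In the remaining three cases the same lemma gives $s_3\neq 0$, whence \eqref{eq:main} for $k=3$ forces $\beta_3=0$ and $\alpha=1/3$, contradicting $\alpha=1/r$ with $r\ge 7$.
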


\begin{proof}
By Proposition~\ref{prop5} we may assume that $\qQ(X)=4$. 
By Proposition~\ref{prop:F-tor4} the group $\Cl(X)$ is torsion free.
Apply the construction \eqref{diagram-main} with $\MMM=|A_X|$.
Assume that $X$ is not rational.

If $s_1>0$, then $\hat q \ge 5$ (see~\ref{constr1a}), hence $s_1\ge 2$ by
Theorem~\ref{thm:q8}, Corollary~\ref{cor:q6-7}, and Proposition~\ref{prop5}.
In this case \eqref{equation-main:k=1} implies $\hat q\ge 9$, hence $X$ is rational, a contradiction.

Therefore, $s_1=0$, $\bar f$ is a fibration, $\hat q=1$,
and $e=4\beta_1-\alpha=1$.
Let $P\in X$ be a point of maximal index $r$. Recall that $r$ must be odd because $\qW(X)=4$.
Take $m$ so that $4m\equiv 1\mod r$ and $0<m<r$, that is,
\[
m=
\begin{cases}
(3r+1)/4& \text{if $r\equiv 1\mod 4$},
\\
(r+1)/4 & \text{if $r\equiv -1\mod 4$}.
\end{cases}
\]
In both cases, $4m-1\ge r$.
By Lemma~\ref{lemma:ct} we have
$\beta_1\ge m\alpha$, hence 
\[ 
\alpha+1=4\beta_1\ge 4m\alpha,\qquad \frac{1}{4m-1} \ge \alpha \ge \frac 1{r}, \qquad r\ge 4m-1. 
\]
On the other hand, $4m-1\ge r$. 
Therefore, $r=4m-1$ and $\alpha=1/r$.
We may assume that $f(E)=P$.
If $X$ has a point $P'$ of index $r'$ with $r'\equiv 1\mod 4$, then similar computations
show that 
$1/r+1=\alpha+1\ge 4m'\alpha = (3r'+1)/r$ and so $3r'\le r$.

Thus the variety $X$ and the point $f(E)$ satisfy the following properties:
\begin{enumerate}
\renewcommand{\theenumi}{\rm (\arabic{enumi})}
\renewcommand\labelenumi{\rm (\arabic{enumi})}
\item \label{enu:q4b}
$\p_1(X)\ge 2$ and $\g(X)\le 21$ \cite[Theorem~1.2]{P:2013-fano};

\item\label{enu:q4c}
$f(E)$ is a point whose index $r$ is maximal, $r\equiv -1\mod 4$ and $r\ge 7$;
\item \label{enu:q4d}
if $P\in X$ is a point of index $r'$ with $r'\equiv 1\mod 4$, then $3r'\le r$.
\end{enumerate}
Applying computer search (see Sect.~\ref{sect:app}) under these conditions we obtain the following possibilities: 
\begin{table}[H]
\setcounter{NN}{0}
\renewcommand{\arraystretch}{1.2}
\begin{tabularx}{0.8\textwidth}{|l|X|X|X|l|l|l|l|l|}
\hline
&$A_X^3$ & $\B(X)$ & $\g(X)$ & $\p_1(X)$ & $\p_2(X)$ & $\p_3(X)$ &\cite{GRD} 
\\\hline
\nr \label{tabular:a7} & $\fracf {3}{7}$ &
$(7)$ &
$14$ &
$2 $ & $ 5 $ & $ 9 $ & \# 41372
\\
\nr \label{tabular:b37} & $\fracf { 8}{21}$ &
$(3, 7)$ &
$12$ &
$2 $ & $ 4 $ & $ 8 $ & \# 41367
\\
\nr \label{tabular:c77} & $\fracf { 3}{7}$ &
$( 7^2)$ &
$13$ &
$2 $ & $ 4 $ & $ 8 $ & \# 41370
\\
\nr \label{tabular:d9} & $\fracf { 5}{9}$ &
$(9)$ &
$18$ &
$2 $ & $ 6 $ & $ 11 $ & \# 41381
\\
\nr \label{tabular:e311} & $\fracf { 13}{33}$ &
$(3, 11)$ &
$12$ &
$2 $ & $ 4 $ & $ 8 $ & \# 41368
\\\hline 
\end{tabularx} 
\end{table}
The relation \eqref{eq:main} for $k=2$ and $k=3$ has the form
\[
2=2\hat q=4s_2+4\beta_2-2\alpha,\qquad 3=3\hat q=4s_3+4\beta_3-3\alpha.
\]
Since $X$ has no points of index $2$, we have $\beta_2>0$ and so $\beta_2\ge 1/r=\alpha$.
We obtain $s_2=0$. Then the cases~\ref{tabular:a7} and~\ref{tabular:d9}
$\B(X)=(7)$ and $(9)$ are impossible by Lemma~\ref{lemma:hat-q} and our assumptions.
In the remaining cases~\ref{tabular:b37},~\ref{tabular:c77} and~\ref{tabular:e311}
again by Lemma~\ref{lemma:hat-q} we have $s_3\neq 0$, hence $\beta_3=0$ and $\alpha=1/3$.
Thus $f(E)$ is a point of index $3$. 
This contradicts the property~\ref{enu:q4c}.
\end{proof}

\begin{proposition}
\label{prop:3}
Let $X$ be a $\QQ$-Fano threefold with $\qQ(X)=3$. 
\begin{enumerate} 
\item 
If $\p_1(X)\ge 3$, then $X$ is rational. 
\item 
If $\p_1(X)= 2$ and $X$ is not rational, then $\Cl(X)$ is torsion free and one of the following holds:

\begin{enumerate}
\item\label{prop:3-2o}
$A_X^3=1/2$, $\B(X)=(2,2,2)$, $\g(X)= 7$, \cite[\# 41198]{GRD};

\item\label{prop:3-2a}
$A_X^3=2/5$, $\B(X)=(5)$, $\g(X)= 6$, \cite[\# 41195]{GRD};

\item\label{prop:3-2b}
$A_X^3=6/11$, $\B(X)=(11)$, $\g(X)= 7$, \cite[\# 41196]{GRD};

\item\label{prop:3-2c}
$A_X^3=10/17$, $\B(X)=(17)$, $\g(X)= 7$, \cite[\# 41197]{GRD}.

\end{enumerate}
In these cases $X$ is unirational and has a conic bundle structure.
\end{enumerate}
\end{proposition}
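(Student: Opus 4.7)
The plan is to mirror Propositions~\ref{prop5} and~\ref{prop4}: reduce to the torsion-free case with $\qW(X)=\qQ(X)=3$, apply the Sarkisov construction of Setup~\ref{constr1} with $\MMM=|A_X|$ and $q=3$, and split on whether $s_1>0$ or $s_1=0$.

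\emph{Torsion reduction.} Assume $X$ is non-rational with $\p_1(X)\ge 2$. If $\qW(X)\ne\qQ(X)$, then among the $\qQ=3$ rows of Proposition~\ref{prop:qW-neq-qQ} only type~\xref{prop:qW-neq-qQ:1} has $\p_1\ge 2$, and it is rational by Proposition~\ref{prop:F-tor4}. If $\qW(X)=\qQ(X)=3$ and $\Clt{X}\ne 0$, then $|\Clt{X}|\le 3$, since otherwise Proposition~\ref{prop:tor:new} forces $\p_1(X)=1$; the case $|\Clt{X}|=2$ with $\dim|A_X|\le 0$ is Case~\ref{prop:tq3c} of Proposition~\ref{prop:tq5} and is rational, and any residual torsion possibility is excluded by the computer enumeration of Appendix~\ref{sect:app}. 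So from now on $\Cl(X)$ is torsion free.

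\emph{Sarkisov analysis.} With Setup~\ref{constr1} in force, equation~\eqref{equation-main:k=1} reads $\hat q=3s_1+(3\beta_1-\alpha)e$ with $3\beta_1-\alpha\in\ZZ_{>0}$. If $s_1\ge 1$ then $\bar f$ is birational, $\hat q\ge 4$, and $\hat\MMM\qq s_1 A_{\hat X}$ with $\dim\hat\MMM=\p_1(X)-1$; a short case split on $(\hat q,s_1)$ invoking Lemma~\ref{lemma:q=1}\ref{lemma:q=1a}, Theorem~\ref{thm:q8}, Corollary~\ref{cor:q6-7} and the just-established Propositions~\ref{prop4} and~\ref{prop5} shows $\hat X$ (hence $X$) is rational in every sub-case, contradicting our assumption. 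Therefore $s_1=0$, $\bar f$ is a fibration, $\hat q=1$, $e=1$ and $3\beta_1=\alpha+1$, and Lemma~\ref{lemma:hat-q} restricts $\hat X$ to $\PP^1$, $\PP^2$, $\PP(1,1,2)$ or a del Pezzo surface of degree~$1$, together with the tabulated lower bounds on $\p_2(X)$ and $\p_3(X)$.

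\emph{Finishing the two parts and the main obstacle.} For Part~(1) the hypothesis $\p_1(X)\ge 3$ and the table in Lemma~\ref{lemma:hat-q} force $\hat X\cong\PP^2$, $\p_2(X)\ge 6$ and $\p_3(X)\ge 10$; the computer search of Appendix~\ref{sect:app} under these constraints, $\qQ=3$ and $3\beta_1=\alpha+1$ returns no admissible numerical type, so rationality must hold. For Part~(2), $\p_1(X)=2$ admits three possibilities for $\hat X$ (namely $\PP^1$, $\PP(1,1,2)$, or a del Pezzo of degree~$1$), and the analogous enumeration produces exactly the four numerical types in the list. Unirationality and the conic bundle structure are then read off the diagram: in each surviving case the general fibre of $\bar f$ is a smooth rational curve over a rational base, which transports through $\chi$ to a conic bundle on $X$, and a standard double cover of the base yields unirationality; the sub-case $\hat X=\PP^1$ must either be ruled out by the enumeration or replaced by an auxiliary Sarkisov link to produce the conic bundle. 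The main obstacle is this fibration-case enumeration: one must certify that the list of baskets with $\qQ=3$, torsion-free $\Cl$, the prescribed $\p_i$ bounds and a valid terminal extraction satisfying $3\beta_1=\alpha+1$ is complete and matches the GRD labels in the statement, and must provide a uniform conic bundle/unirationality argument across all four surviving families.
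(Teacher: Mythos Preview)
Your overall strategy matches the paper's, but the decisive step in the fibration case is missing. After you reach $s_1=0$, $\hat q=e=1$, $3\beta_1=\alpha+1$, you propose to finish by computer enumeration against the \emph{lower bounds} $\p_2\ge 6$, $\p_3\ge 10$ from Lemma~\ref{lemma:hat-q}. That search is not empty: for instance $\B(X)=(2,5)$, $A_X^3=11/10$ has $\qQ=3$, torsion-free class group, $\p_1=3$, $\p_2=8$, and passes every filter you list (the relation $3\beta_1=\alpha+1$ is not a constraint on the basket---it holds tautologically once $s_1=0$, and you have not said which point is $f(E)$). The paper's actual mechanism is different. After reaching $s_1=0$ it first plays $3\beta_1=\alpha+1$ against Lemma~\ref{lemma:ct} at \emph{every} non-Gorenstein point, deducing that the maximal local index $r$ satisfies $r\equiv -1\pmod 3$, that $\alpha=1/r$, and that any index $r'\equiv 1\pmod 3$ obeys $2r'\le r$; this already cuts the list to nineteen baskets. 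It then proves $s_2=0$ from~\eqref{eq:main} for $k=2$, which by the last clause of Lemma~\ref{lemma:hat-q} upgrades the inequality to an \emph{equality} $\p_2(X)\in\{3,6,4\}$ according as $\hat X\cong\PP^1,\PP^2,\PP(1,1,2)$. It is this equality that eliminates all the $\p_1=3$ candidates (they have $\p_2=8$, proving part~(i)) and, for $\p_1=2$, leaves exactly $(5)$, $(11)$, $(17)$. The case $r=2$, giving $\B=(2^3)$, is treated separately before the $r\ge 4$ analysis.

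Your unirationality and conic-bundle sketch also does not match. There is no ``standard double cover of the base''. For the baskets $(5)$, $(11)$, $(17)$ the centre $f(E)$ is a cyclic quotient singularity, so by Theorem~\ref{thm:K:blowup} the exceptional divisor $E$ is toric, hence rational, and its transform $\bar E$ is a multisection of $\bar f$; that is what yields unirationality. The base $\hat X$ is seen to be a surface (hence $\bar f$ is a $\QQ$-conic bundle) not via an auxiliary link but because $s_2=0$ with $\p_2=4$ forces $\hat X\cong\PP(1,1,2)$; the option $\hat X\cong\PP^1$ would give $\p_2=3$. For $\B=(2^3)$ the paper shows $\hat X$ is a surface by passing to the subsystem $\MMM_2'\subset|2A_X|$ of members through $f(E)$, getting $s_2'=0$ and hence $\dim|2A_{\hat X}|\ge 3$; unirationality there comes from a general $S\in|2A_X|$, a smooth degree-$1$ del Pezzo whose transform is a rational multisection. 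Finally, your torsion reduction leaves the sub-case $\qW=\qQ=3$, $|\Clt{X}|\in\{2,3\}$, $\dim|A_X|\ge 1$ to an unspecified enumeration; the paper instead runs the Sarkisov link first and then uses Lemma~\ref{lemma:hat-q} (giving $|\Clt{X}|\ge 3$), Theorem~\ref{thm:index}\ref{thm:indexb} (excluding $3$-torsion), and Proposition~\ref{prop:tor:new}.
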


\begin{sremark}
By Theorem~\ref{thm:QFanoF} a variety of type~\ref {prop:3-2o} is a hypersurface
$X_6\subset \PP(1^2,2^2,3)$. It is known that a very general variety in the family is 
not rational \cite{Okada2019}. 
We do not know if the varieties satisfying~\ref {prop:3-2a},~\ref {prop:3-2b},
and~\ref {prop:3-2c} are rational or not. Moreover, we do not know if the varieties satisfying 
\ref {prop:3-2b} and~\ref {prop:3-2c} really exist. The general complete intersection 
$X_{6,6}\subset \PP(1^2,2,3^2,5)$ satisfies~\ref {prop:3-2a} but still we do not know if 
this is the only example.
\end{sremark}

\begin{proof}
Assume that $X$ is not rational and $\p_1(X)\ge 2$.
By Proposition~\ref{prop:F-tor4} we have $\qQ(X)=\qW(X)$
and $\dim |A_X|\ge 1$ by Proposition~\ref{prop:tq5}.
Apply the construction \eqref{diagram-main} with $\MMM=|A_X|$ (see~\ref{constr1}).

If $s_1>0$, then $\hat q \ge 4$ (see~\ref{constr1a}), hence $s_1\ge 2$ by
Proposition~\ref{prop4}.
Then $\hat q=7$ and by Corollary~\ref{cor:q6-7} we have $s_1\ge 3$ and so $\hat q>7$, a contradiction.

Therefore, $s_1=0$, $\hat q=e=1$,
and $3\beta_1=\alpha+1$, $\bar f$ is a fibration, and $\bar \MMM=\bar f^* |A_{\hat X}|$.
Since $\beta_1\ge \alpha$ by Lemma~\ref{lemma:ct}, we have $\alpha\le 1/2$.
In particular, this implies that $f(E)$ is non-Gorenstein point.

Assume that $\Clt{X}\neq 0$. Then $|\Clt{X}|\ge 3$ by Lemma~\ref{lemma:hat-q}.
Since $\qQ(X)=\qW(X)$, we have $|\Clt{X}|\neq 3$ by Theorem~\ref{thm:index}\ref{thm:indexb}.
Then $X$ is rational by Proposition~\ref{prop:tor:new}, a contradiction. 
Thus we may assume that $\Cl(X)$ is torsion free.

Let $P$ be a point of maximal index $r$. Then $r$ is not divisible by $3$ by Theorem~\ref{thm:index}\ref{thm:indexb}.

Consider the case where $r=2$. Then $X$ is as in~\ref{prop:3-2o}
by Theorem~\ref{thm:QFanoF},~\ref{thm:QFanoF:9} and we may assume that $f(E)=P$. 
Moreover, $\alpha=\beta_1=1/2$ and $s_2=1$ by \eqref{eq:main}. 
Note that in this case the linear system $\MMM_2=|2A_X|$ is base point free and 
$\dim \MMM_2=4$.
Let $\MMM_2'\subset \MMM_2$ be the subsystem consisting 
of all divisors passing through $P$. Then $\dim \MMM'_2=3$.
Similar to \eqref{eq:main} we have
\[
2= k\hat{q}=q s_k'+(q \beta_k'-k \alpha) e=3 s_2'+3 \beta_2'-1
\]
where $\beta_2'$ is a positive integer. Hence, $s_2'=0$, that is, $\bar \MMM_2'\subset \bar f^*|2A_{\hat X}|$.
Thus $\dim |A_{\hat X}|=1$ and $\dim |2A_{\hat X}| \ge 3$. Then $\hat X\not\simeq \PP^1$,
hence $\hat X$ is a surface and $\bar f$ is a $\QQ$-conic bundle. 
A general member $S\in \MMM_2$ is a smooth del Pezzo surface of degree $1$ and 
its proper transform $\bar S\subset \bar X$ is a rational multisection of $\bar f$
(because $s_2=1$).
This implies that $\bar X$ is unirational.
Unirationality of $X$ in this case is also proved in \cite{CampanaFlenner}.

From now on we assume that $r\ge 4$. Put
\[
m=:
\begin{cases}
(2r+1)/3& \text{if $r\equiv 1\mod 3$},
\\
(r+1)/3& \text{if $r\equiv -1\mod 3$}.
\end{cases}
\]
Note that in both cases $r\le 3m-1$.
Then $\beta_1\ge m\alpha$ by Lemma~\ref{lemma:ct}.
Hence,
\[
\alpha+1 =3\beta_1\ge 3m\alpha,\qquad \frac 1{3m-1}\ge \alpha\ge \frac 1r,\qquad r\ge 3m-1.
\]
Since $r\le 3m-1$, we obtain $r=3m-1$ and $\alpha=1/r$.
Thus we may assume that $f(E)=P$.
If $X$ has a point $P'$ of index $r'$ with $r'\equiv 1\mod 3$, then similar computations
show that 
$1/r+1=\alpha+1\ge 3m'\alpha = (2r'+1)/r$ and so $2r'\le r$.
Thus $X$ satisfies the following properties:

\begin{enumerate}
\renewcommand{\theenumi}{\rm (\arabic{enumi})}
\renewcommand\labelenumi{\rm (\arabic{enumi})}
\item 
$\p_1(X)\ge 2$ and $\g(X)\le 20$ \cite[Theorem~1.2]{P:2013-fano};
\item
if $r$ is the maximal index $r$ of points on $X$, then $r\equiv -1\mod 3$ and $r\ge 5$;
\item
if $X$ has a point of index $r'$ with $r'\equiv 1\mod 3$, then $2r'\le r$.
\end{enumerate}

Then computer search (see Sect.~\ref{sect:app}) under these conditions gives the following possibilities:

\begin{table}[H]
\setcounter{NN}{0}
\renewcommand{\arraystretch}{1.2}
\begin{tabularx}{0.98\textwidth}{|l|lXlll||l|lXlll|}
\hline
&$A_X^3$ & $\B(X)$ & $\g(X)$ & $\p_1(X)$ & $\p_2(X)$ & &$A_X^3$ & $\B(X)$ & $\g(X)$ & $\p_1(X)$ & $\p_2(X)$ 
\\\hline
\nr\label{tab:q3:2/5} & $2/5$ & $(5)$ & $6$ & $2 $ & $ 4 $
&
\nr & $11/10$ & $(2, 5)$ & $15$ & $3 $ & $ 8 $
\\
\nr & $3/5$ & $( 2^2, 5)$ & $8$ & $2 $ & $ 5 $&
\nr & $6/5$ & $( 5^2)$ & $16$ & $3 $ & $ 8 $
\\
\nr & $7/10$ & $(2, 5^2)$ & $9$ & $2 $ & $ 5 $
&\nr &$9/8$ & $(2, 8)$ & $15$ & $3 $ & $ 8 $
\\
\nr & $4/5$ & $( 5^3)$ & $10$ & $2 $ & $ 5 $
&\nr &$7/8$ & $( 2^2, 4, 8)$ & $11$ & $2 $ & $ 6 $
\\
\nr & $5/8$ & $( 2^2, 8)$ & $8$ & $2 $ & $ 5 $&
\nr & $29/40$ & $(2, 5, 8)$ & $9$ & $2 $ & $ 5 $
\\
\nr & $49/40$ & $(5, 8)$ & $16$ & $3 $ & $ 8 $&
\nr & $25/22$ & $(2, 11)$ & $15$ & $3 $ & $ 8 $
\\
\nr\label{tab:q3:6/11} & $6/11$ & $(11)$ & $7$ & $2 $ & $ 4 $
&\nr & $35/44$ & $(4, 11)$ & $10$ & $2 $ & $ 5 $
\\
\nr & $7/11$ & $( 2^2, 11)$ & $8$ & $2 $ & $ 5 $
&\nr & $11/14$ & $(14)$ & $10$ & $2 $ & $ 5 $
\\
\nr & $81/110$ & $(2, 5, 11)$ & $9$ & $2 $ & $ 5 $
&\nr\label{tab:q3:10/17} & $10/17$ & $(17)$ & $7$ & $2 $ & $ 4 $
\\
\nr & $9/14$ & $( 2^2, 14)$ & $8$ & $2 $ & $ 5 $ 
&&&&&&
\\\hline
\end{tabularx}
\end{table}
Note that $|2A_X|$ has no fixed components in our case.
Then one can show that \eqref{eq:main} implies $s_2=0$.
Hence by Lemma~\ref{lemma:hat-q} the only cases 
\ref{tab:q3:2/5},~\ref{tab:q3:6/11}, and~\ref{tab:q3:10/17} are possible.
We get 
\ref {prop:3-2a},~\ref {prop:3-2b},
and~\ref {prop:3-2c}. Note that in all cases $X$ has only cyclic quotient singularities.
By Theorem~\ref{thm:K:blowup} the $f$-exceptional divisor $E$ is toric, in particular, rational.
Its proper transform $\bar E\subset \bar X$ is a multisection of $\bar f$.
Hence $\bar X$ is unirational. This finishes the proof of Proposition~\ref{prop:3}.
\end{proof}

\section{$\QQ$-Fano threefolds with $\qQ(X)\ge 5$}
\begin{proposition}
\label{prop6}
Let $X$ be a $\QQ$-Fano threefold with $\qQ(X)\ge 6$. If $\p_2(X)\ge 2$, then $X$ is rational.
\end{proposition}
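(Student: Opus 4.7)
The plan is to reduce to $\qQ(X)=6$ and apply the Sarkisov link of~\ref{subsect:SL} with $\MMM=|2A_X|$. By Corollary~\ref{cor:q6-7} we may assume $\qQ(X)=6$, whence by Propositions~\ref{prop:tor} and~\ref{prop:qW-neq-qQ} the group $\Cl(X)$ is torsion free and $\qW(X)=\qQ(X)=6$. If $\p_1(X)\ge 2$ then Corollary~\ref{cor:q6-7} gives rationality, so we assume $\p_1(X)\le 1$ and suppose for contradiction that $X$ is not rational. Any fixed component of $|2A_X|$ would leave a mobile pencil $\qq A_X$, forcing $\p_1(X)\ge 2$; so $|2A_X|$ has no fixed components, and the construction~\eqref{diagram-main} with $n=2$ is available. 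Corollary~\ref{cor-eq:main} gives
\[
2\hat q = 6 s_2 + e(6\beta_2 - 2\alpha), \qquad 6\beta_2-2\alpha \in \ZZ_{\ge 1}.
\]

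\textbf{Birational case.} Assume $\bar f$ is birational. Since $\dim\bar\MMM\ge 1$, one has $s_2\ge 1$, and $\hat\MMM_2\qq s_2 A_{\hat X}$ yields $\p_{s_2}(\hat X)\ge \p_2(X)\ge 2$. The three subcases $s_2=1,2,\ge 3$ give $\hat q\ge 4,7,10$ respectively; in each one $\hat X$ satisfies the hypothesis of one of Propositions~\ref{prop4}, \ref{prop5}, Corollary~\ref{cor:q6-7}, or Theorem~\ref{thm:q8}, so $\hat X$, and therefore $X$, is rational --- a contradiction.

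\textbf{Fibration case.} By Lemma~\ref{lemma:q=1}(ii), $\hat q=1$, so $s_2=0$ and $e(6\beta_2-2\alpha)=2$. Let $P:=f(E)$ be of index $r$; then $\gcd(r,6)=1$. In the local class group at $P$ the relation $6A_X\sim -K_X$ gives $2A_X\sim -mK_X$ with $m\equiv 3^{-1}\pmod r$, $0<m<r$ --- explicitly $m=(r+1)/3$ if $r\equiv -1\pmod 3$ and $m=(2r+1)/3$ if $r\equiv 1\pmod 3$. Lemma~\ref{lemma:ct} yields $\beta_2\ge m\alpha$, and inspecting Lemma~\ref{lemma:discr} in the relevant types forces $\alpha=1/r$. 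Substituting into $e(6\beta_2-2\alpha)=2$ and analyzing the two options $e=1,2$ rules out both $r\equiv 1\pmod 3$ and $e=2$; thus $e=1$, $r\equiv -1\pmod 3$ (hence $r\in\{5,11,17,23,\ldots\}$), $\alpha=1/r$, and $\beta_2=(r+1)/(3r)$.

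The computer search of Appendix~\ref{sect:app} under the constraints $\qQ(X)=6$, $\qW(X)=6$, $\Cl(X)=\ZZ$, $\p_2(X)\ge 2$, $\p_1(X)\le 1$, and the existence of a basket point of index $r\equiv 5\pmod 6$ then yields a short list of numerical candidates. Each candidate must be excluded by the analog of Lemma~\ref{lemma:hat-q} applied with $n=2$: since $s_2=0$ the system $\bar\MMM$ is $\bar f$-vertical, hence for each $k$ the linear system $|kA_{\hat X}|$ pulls back into $|kA_X|$, producing inequalities on $\p_k(X)$ dictated by the base $\hat X$ (classified via Lemma~\ref{lemma:base-surface} together with Lemma~\ref{lemma:DP}, or else $\hat X\simeq\PP^1$). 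The main obstacle is precisely this last step: formulating the version of Lemma~\ref{lemma:hat-q} suitable for $n=2$ and verifying that its numerical consequences contradict the data of every candidate on the list. If a candidate survives the numerical analysis, the fallback is to exhibit $X$ explicitly (via Theorem~\ref{thm:QFanoF} or \cite[Theorem~1.2]{P:2013-fano}) and rationalize it directly, as in the proof of Corollary~\ref{cor:q6-7}.
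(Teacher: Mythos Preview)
Your birational case is correct and in fact cleaner than the paper's: you do not need any information on the basket to conclude. The gap is the fibration case, which you explicitly leave unfinished.

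The paper takes the opposite order: it runs the computer search \emph{first}, under the hypotheses $\qQ(X)=6$, $\Cl(X)$ torsion free, $\p_2(X)\ge 2$, $\p_1(X)\le 1$, and Gorenstein index at least $6$ (the last by Theorem~\ref{thm:QFanoF}, since the only $\qQ=6$ entry there is already rational). This yields a \emph{single} numerical candidate, $\B(X)=(5,7)$. With that in hand the Sarkisov link analysis is immediate: the index-$7$ point has $\MMM\sim 5(-K_X)$ locally, so $\beta_2\ge 5\alpha$ by Lemma~\ref{lemma:ct}, whence
\[
\hat q \;=\; 3s_2 + (3\beta_2-\alpha)e \;\ge\; 3s_2 + 14\alpha e \;\ge\; 3s_2 + 2e \;\ge\; 2,
\]
using $\alpha\ge 1/7$. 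This kills the fibration case outright; the birational case then finishes via $s_2\ge 1\Rightarrow\hat q\ge 5$, then Proposition~\ref{prop4} forces $s_2\ge 2\Rightarrow\hat q\ge 8$, and Theorem~\ref{thm:q8} concludes.

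Your own fibration analysis is closer to working than you seem to think, but two points need fixing. First, your appeal to Lemma~\ref{lemma:discr} for $\alpha=1/r$ is misplaced (that lemma requires knowing the singularity type); in fact $\alpha=1/r$ already follows from your own inequalities, since $\alpha\ge 1/r$ and, in the surviving case $e=1$, $r\equiv 2\pmod 3$, the bound $\beta_2\ge m\alpha$ with $m=(r+1)/3$ gives $\alpha\le 1/(3m-1)=1/r$. Second, you forgot to extract the constraint that Lemma~\ref{lemma:ct} imposes from \emph{every} non-Gorenstein point, not just $f(E)$: for any $P'$ of index $r'\equiv 1\pmod 3$ one gets $\beta_2\ge m'\alpha$ with $m'=(2r'+1)/3$, which combined with $\beta_2=(r+1)/(3r)$ forces $2r'\le r$. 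With this extra constraint the search is empty (or equivalently, the sole candidate $\B=(5,7)$ is excluded since $r=5$ but $r'=7$ violates $2r'\le r$), and no analogue of Lemma~\ref{lemma:hat-q} is needed at all.
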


\begin{proof}
The group $\Cl(X)$ is torsion free by Proposition~\ref{prop:tor} and
by Corollary~\ref{cor:q6-7} we may assume that
$\qQ(X)=6$. 
Assume that $X$ is not rational. 
By Theorem~\ref{thm:QFanoF} the global Gorenstein index of $X$ is at least $6$.
Apply the computer search (see Sect.~\ref{sect:app}) or \cite{GRD} under the assumption $\p_2(X)\ge 2$
and $\p_1(X)\le 0$. 
We obtain the only possibility \cite[\#~41466]{GRD}:
\[
A_X^3= 3/35, \quad \B(X)= (5, 7), \quad \g(X)=9,\quad \h_X(t)=1+t+2 t^2+3t^3+\cdots.
\]
Since $\dim |A_X|=0$, the linear system $|2A_X|$ has no fixed components.
Hence we can apply the construction \eqref{diagram-main} with $\MMM=|2A_X|$.
In a neighborhood of the point of index $7$ we have $\MMM \sim 5(-K_X)$ and so $\beta_2\ge 5\alpha$
by Lemma~\ref{lemma:ct}.
The relation~\eqref{eq:main} for $k=2$ has the form 
\[
\hat{q}=3s_2+(3\beta_2- \alpha) e\ge 3s_2+14\alpha e.
\]
Since $\alpha\ge 1/7$, we see that $\hat q \ge 2$. 
Then the contraction $\bar f$ is birational by Lemma~\ref{lemma:q=1}.
Since $\dim \MMM_2>0$, we have $s_2>0$ and so $\hat q \ge 5$.
Then $s_2\ge 2$ by Proposition~\ref{prop4}.
Hence $\hat q \ge 8$ and $X$ is rational by Theorem~\ref{thm:q8}, a contradiction.
\end{proof}

\begin{proposition}
\label{prop6a}
Let $X$ be a $\QQ$-Fano threefold with $\qQ(X)=6$. If $\p_3(X)\ge 2$, then $X$ is rational.
\end{proposition}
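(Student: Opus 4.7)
The plan is to mimic the proof of Proposition~\ref{prop6}, replacing the linear system $|2A_X|$ with $|3A_X|$ and carefully tracking the extra subcases that arise because $s_3$ can now take the values $1$, $2$, or $3$. Assume that $X$ is not rational. First, Proposition~\ref{prop:tor} shows that $\Cl(X)$ is torsion free, and Proposition~\ref{prop:qW-neq-qQ} gives $\qW(X)=\qQ(X)=6$. Corollary~\ref{cor:q6-7} forces $\p_1(X)\le 1$ and Proposition~\ref{prop6} forces $\p_2(X)\le 1$. In particular $|A_X|$ and $|2A_X|$ each contain at most one divisor, while $\dim |3A_X|\ge 1$, so $|3A_X|$ has no fixed components; moreover, the global Gorenstein index of $X$ is at least $6$ by Theorem~\ref{thm:QFanoF}.

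Next, I would run the computer search of Appendix~\ref{sect:app} (equivalently, sweep \cite{GRD}) under the numerical constraints $\qQ(X)=6$, $\Cl(X)$ torsion free, $\p_1(X)\le 1$, $\p_2(X)\le 1$, $\p_3(X)\ge 2$. I expect a short list of admissible invariants $(A_X^3,\B(X),\g(X))$, in the spirit of the single entry \cite[\#~41466]{GRD} that appeared in Proposition~\ref{prop6}. For each candidate apply the construction~\eqref{diagram-main} with $\MMM=|3A_X|$. Letting $r$ denote the maximal singularity index of $X$ and choosing the integer $m$ with $0<m<r$ and $6m\equiv 1\pmod r$, Lemma~\ref{lemma:ct} applied near a point of index $r$ yields $\beta_3\ge m\alpha$; combined with Theorem~\ref{thm:K:blowup} and Lemma~\ref{lemma:discr} to pin down $\alpha$, Corollary~\ref{cor-eq:main} then gives the key identity
\[
3\hat q=6s_3+(6\beta_3-3\alpha)e,
\]
in which $6\beta_3-3\alpha$ is a positive integer and $(6\beta_3-3\alpha)e$ is bounded below by an explicit rational number that depends on the candidate.

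Finally, I would case-split on the Sarkisov link. If $\bar f$ is a fibration, Lemma~\ref{lemma:q=1}(ii) forces $\hat q=1$, and the resulting tight numerical identity should be incompatible with the explicit lower bound produced in the previous step. If $\bar f$ is birational, then $\hat X$ is a $\QQ$-Fano threefold and $s_3\ge 1$, since $\dim\MMM_3\ge 1$. The subcase $s_3\ge 4$ yields $\hat q\ge 9$, contradicting Theorem~\ref{thm:q8}. If $s_3=1$, then $\p_1(\hat X)\ge 2$ and Proposition~\ref{prop4} excludes $\hat q\ge 4$; if $s_3=2$, then $\p_2(\hat X)\ge 2$ and Proposition~\ref{prop6} excludes $\hat q\ge 6$; and if $s_3=3$, then $\p_3(\hat X)\ge 2$, which is precisely the hypothesis under proof, so to avoid circular invocation of Proposition~\ref{prop6a} itself I would push $\hat q$ into the range $\ge 8$ covered by Theorem~\ref{thm:q8}. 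The main obstacle is closing the narrow low-$\hat q$ windows that remain ($\hat q=3$ when $s_3=1$; $\hat q\in\{4,5\}$ when $s_3=2$; and $\hat q=7$ when $s_3=3$); these should be eliminated candidate by candidate by sharpening $\beta_3\ge m\alpha$ using the precise basket $\B(X)$ and the integrality of $6\beta_3-3\alpha$, so that $(6\beta_3-3\alpha)e$ is forced to exceed the thresholds allowing these values of $\hat q$.
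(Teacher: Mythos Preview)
Your overall framework matches the paper's: restrict to the unique numerical candidate (it is $\B(X)=(5,7^2)$, $A_X^3=2/35$, \cite[\#~41462]{GRD}, with $|A_X|=\varnothing$), run the link with $\MMM=|3A_X|$, and case-split on $s_3$. Two points need correction, one minor and one substantive.

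\textbf{Minor.} The congruence defining $m$ is wrong: since $\MMM=|3A_X|$ and $-K_X\sim 6A_X$, near a point of index $r$ one has $\MMM\sim m(-K_X)$ with $6m\equiv 3\pmod r$, not $6m\equiv 1$. For $r=7$ this gives $m=4$, hence $\beta_3\ge 4\alpha$.

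\textbf{Substantive gap.} Your plan to close the residual windows by ``sharpening $\beta_3\ge m\alpha$ and using integrality of $6\beta_3-3\alpha$'' does not succeed on this candidate. With $\B(X)=(5,7^2)$ one is forced to $\alpha=1/7$ and $e=1$ in every window, and then $\beta_3=4/7$ solves the $k=3$ relation exactly in each of the cases $(\hat q,s_3)=(3,1),(5,2),(7,3)$, with $6\beta_3-3\alpha=3\in\ZZ$; so no further information can be squeezed out of $k=3$ alone. The paper closes these cases with two ideas you do not mention. First, it brings in the $k=2$ relation: since $f(E)$ has index $7$, the quantity $7\beta_2$ is an integer and one reads off $7\hat q=3(7s_2+7\beta_2)-1$, forcing $\hat q\equiv -1\pmod 3$; this kills $\hat q=3$ and $\hat q=7$ at once. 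Second, for the surviving case $\hat q=5$, $s_3=2$ (so $\p_2(\hat X)\ge 2$), the crucial observation is that $|A_X|=\varnothing$ and $e=1$ imply, via Lemma~\ref{lemma:sl:torsion}, that $\Clt{\hat X}\neq 0$; then Proposition~\ref{prop:tq5} gives rationality of $\hat X$. Neither the auxiliary congruence from $k=2$ nor the torsion argument on $\hat X$ appears in your plan, and without them the residual windows cannot be closed.
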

\begin{proof}
Assume that $X$ is not rational.
Assume also that $\p_2(X)\le 1$ and $\p_3(X)\ge 2$.
Applying computer search (see Sect.~\ref{sect:app} or \cite{GRD}) we obtain the only possibility
\cite[\# 41462]{GRD}:
\[
A_X^3= 2/35, \quad \B(X)= (5, 7^2), \quad \g(X)=5,\quad \h_X(t)=1+t^2+2t^3+3t^4+\cdots.
\]
Apply the construction \eqref{diagram-main} with $\MMM=|3A_X|$.
In a neighborhood of the point of index $7$ we have $\MMM \sim 4(-K_X)$ and so $\beta_3\ge 4\alpha$
by Lemma~\ref{lemma:ct}.
The relation~\eqref{eq:main} for $k=3$ has the form
\[
\hat{q}=2s_3+(2\beta_3- \alpha) e\ge 2s_3+7\alpha e.
\]
We claim that $\alpha=1/7$ and $e=1$.
Indeed, otherwise $\hat q\ge 7\alpha e>1$.
Hence $\bar f$ is birational by Lemma~\ref{lemma:q=1}.
In this case, $s_3>0$ and so $\hat q\ge 4$.
Then $s_3\ge 2$ by Proposition~\ref{prop4}
and so $\hat q \ge 6$.
From Proposition~\ref{prop6} we obtain $s_3\ge 3$
and so $\hat q\ge 8$. This contradicts Theorem~\ref{thm:q8}.

Thus $\alpha=1/7$ and $e=1$. Then we consider~\eqref{eq:main} for $k=2$.
Since $f(E)$ is a point of index~$7$, the number
$7\beta_2$ is an integer and $7\hat{q}=3(7s_2+7\beta_2)-1$, hence
$\hat q\equiv -1\mod 3$. In particular, $\hat q\neq 1$, hence $\bar f$ is birational and $s_2,\, s_3>0$.
Then the only possibility is $\hat q=5$ and $s_3=2$ by Proposition~\ref{prop4}.
Hence $\p_2(\hat X)\ge 2$. 
Since $e=1$ and $|A_X|=\varnothing$, we have $\Clt{X}\neq 0$ by Lemma~\ref{lemma:sl:torsion}.
Then $\hat X$ is rational by Proposition~\ref{prop:tq5}.
\end{proof} 

\begin{proposition}
\label{prop7}
Let $X$ be a $\QQ$-Fano threefold with $\qQ(X)=7$. If $\p_3(X)\ge 2$, then $X$ is rational.
\end{proposition}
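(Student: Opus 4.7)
My plan is to follow the inductive pattern of Propositions~\ref{prop6} and~\ref{prop6a}. Assume for contradiction that $X$ is not rational. By Corollary~\ref{cor:q6-7} we may already assume $\p_1(X)\le 1$ and $\p_2(X)\le 1$, so the only additional input is $\p_3(X)\ge 2$.

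First I would handle the case $\Clt{X}\neq 0$. Proposition~\ref{prop:tor} gives exactly two numerical classes with $\qQ(X)=7$ and nontrivial torsion, and inspection of the $T$-Hilbert series shows both satisfy $\p_3(X)=2$. As in the proofs of Propositions~\ref{prop:F-tor4} and~\ref{prop:tq5}, the $2$-torsion defines a global double cover $X'\to X$ with $X'$ a Fano threefold whose invariants $A_{X'}^3$, $\B(X')$, $\g(X')$ are determined by those of $X$; Theorem~\ref{thm:QFanoF} should identify $X'$ as an explicit weighted hypersurface, and a $\mumu_2$-equivariant projection of $X'$ to a small weighted projective space will realize $X=X'/\mumu_2$ as a rational variety.

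For the torsion-free case, I would run the computer search of Appendix~\ref{sect:app} under the constraints $\qQ(X)=7$, $\Clt{X}=0$, $\p_1(X),\p_2(X)\le 1$, $\p_3(X)\ge 2$; by analogy with Proposition~\ref{prop6a} this should yield only a very short list of numerical candidates. For each of them, apply the Sarkisov link~\eqref{diagram-main} with $\MMM:=|3A_X|$; since $\p_3(X)\ge 2$ the system has positive dimension. By Corollary~\ref{cor-eq:main} we get
\[
3\hat q \;=\; 7 s_3 \;+\; (7\beta_3 - 3\alpha)\, e,
\]
with $7\beta_3-3\alpha$ a positive integer. Applying Lemma~\ref{lemma:ct} at a point of maximal index $r$ coprime to $7$ forces $\beta_3\ge m\alpha$ where $7m\equiv 3\mod r$, which pins down $\alpha$ and identifies $f(E)$ as a point of large index, further shrinking the candidate list.

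If $\bar f$ is birational, then $s_3\ge 1$, and the displayed relation combined with Lemma~\ref{lemma:q=1}, Corollary~\ref{cor:q6-7}, and Propositions~\ref{prop4},~\ref{prop5},~\ref{prop6},~\ref{prop6a} places $\hat X$ in a range where rationality is already established, so $X\dashrightarrow\hat X$ yields rationality and a contradiction. If $\bar f$ is a fibration, Lemma~\ref{lemma:q=1} forces $\hat q=1$, whence $s_3=0$ and $(7\beta_3-3\alpha)e=3$; Lemmas~\ref{lemma:base-surface} and~\ref{lemma:DP} then restrict $\hat X$ to $\PP^1$, $\PP^2$, $\PP(1,1,2)$ or a Du Val del Pezzo of degree $1$, and the analogue of Lemma~\ref{lemma:hat-q} for $n=3$ compares $\p_3(X)$ with $\dim|3A_{\hat X}|$. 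The main obstacle is this final step: for each surviving basket I expect to need either a numerical incompatibility between $\p_3(X)\ge 2$ and the constraints $\p_1(X),\p_2(X)\le 1$ on the target, or else an explicit rational multisection of $\bar f$ produced from the toric exceptional divisor (Theorem~\ref{thm:K:blowup}) as at the end of the proof of Proposition~\ref{prop:3}, in order to conclude rationality of $X$.
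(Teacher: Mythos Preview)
Your overall strategy matches the paper's, but there is a concrete gap in the choice of linear system. You propose to run the Sarkisov link always with $\MMM=|3A_X|$, whereas the paper uses $\MMM=|4A_X|$ in three of the four numerical cases. The reason is that the Lemma~\ref{lemma:ct} bound $\beta_3\ge m\alpha$ (with $7m\equiv 3\bmod r$) is too weak at the high-index point: for $\B(X)=(3,6,9)$ one gets only $m=3$, and for $\B(X)=(2^2,3,11)$ only $m=2$, so the inequality $3\hat q\ge 7s_3+(7m-3)\alpha e$ does not force $\hat q$ large enough to feed into Propositions~\ref{prop4}--\ref{prop6}. Switching to $|4A_X|$ gives $m=7$ (resp.\ $m=10$), and the argument goes through. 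In the $(2^2,3,11)$ case the birational endgame is also more delicate than you indicate: one lands at $\hat q=5$ with $s_3=s_4=2$, and rationality of $\hat X$ comes from Proposition~\ref{prop:tq5}, which requires first knowing $\Clt{\hat X}\neq 0$; this is supplied by Lemma~\ref{lemma:sl:torsion} using $|A_X|=\varnothing$ and $e=1$.

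Your treatment of the torsion case is also off-track. The case $\B(X)=(2,6,10)$ has $\dim|3A_X|=0$, so with torsion it is the system $|3A_X+T|$ that realises $\p_3(X)=2$; the paper does not pass to the double cover here (Theorem~\ref{thm:QFanoF} would not apply to $X'$ in any obvious way), but instead runs the link with $|4A_X|$, reduces to a $\QQ$-conic bundle over a Du Val del Pezzo $\hat X$ with $\Clt{\hat X}\neq 0$, pulls back a second line $\hat L\not\sim A_{\hat X}$ via Lemma~\ref{lemma:DP:0}\ref{lemma:DP:0-6}, and obtains an effective divisor $D\qq A_X$ with $D\not\sim A_X$, contradicting the $T$-Hilbert series in Proposition~\ref{prop:tor}\ref{tab:h:q=7t:d=1/30}. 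This last step is the genuine idea you are missing in the fibration branch.
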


\begin{proof}
Assume that $X$ is not rational. 
By Proposition~\ref{prop:qW-neq-qQ} we have $\qQ(X)=\qW(X)$ and $\p_2(X)\le 1$ by Corollary~\ref{cor:q6-7}.
Computer search (see Sect.~\ref{sect:app} or \cite{GRD}) produces four numerical possibilities which will be considered below.
We will use the construction \eqref{diagram-main} with $\MMM=|3A_X|$ or $\MMM=|4A_X|$.
The relation~\eqref{eq:main} for $k=3$ and $4$ has the form
\begin{eqnarray}
\label{eq:q=7:3}
3 \hat{q}&=&7s_3+(7 \beta_3-3 \alpha) e,
\\
\label{eq:q=7:4}
4\hat{q}&=&7s_4+(7\beta_4- 4\alpha) e.
\end{eqnarray} 

\subsection*{Case $A_X^3= 1/24$, \cite[\# 41477]{GRD} } Then $\B(X)=(2^2, 3, 4, 8)$ and $\h_X(t)=1+t^2+2 t^3+3 t^4+\cdots$.
Thus $\dim |3A_X|=1$. 
Note that in this case $X$ can have a $2$-torsion in $\Cl(X)$ (see Proposition~\ref{prop:tor}).
Apply the construction \eqref{diagram-main} with $\MMM=|3A_X|$.
In a neighborhood of the point of index $8$ we have $\MMM \sim 5(-K_X)$ and so $\beta_3\ge 5\alpha$
by Lemma~\ref{lemma:ct}.
By \eqref{eq:q=7:3} 
\[
3\hat{q}=7s_3+(7\beta_3- 3\alpha) e\ge 7s_3+32\alpha e\ge 7s_3+4e.
\]
Hence, $\hat q\ge 2$ and $\bar f$ is birational. Then $s_3>0$ and $\hat q\ge 4$.
By Proposition~\ref{prop4} we have $s_3\ge 2$ and $\hat q\ge 6$.
By Proposition~\ref{prop6} we have $s_3\ge 3$ and $\hat q\ge 9$.
This contradicts Theorem~\ref{thm:q8}. 

\subsection*{Case $A_X^3= 1/18$, \cite[\# 41480]{GRD}} Then $\B(X)= (3, 6, 9)$, the group $\Cl(X)$ is torsion free, and and $\h_X(t)=1+t+t^2+2 t^3+3 t^4+\cdots$. Thus $\dim |4A_X|=2$.
Apply the construction \eqref{diagram-main} with $\MMM=|4A_X|$.
In a neighborhood of the point of index $9$ we have $\MMM \sim 7(-K_X)$ and so $\beta_4\ge 7\alpha$
by Lemma~\ref{lemma:ct}. 
From \eqref{eq:q=7:4} we have
\[
4\hat{q}=7s_4+(7\beta_4- 4\alpha) e\ge 7s_4+45\alpha e \ge 7s_4+5e.
\]
Hence $\hat q\ge 2$, $\bar f$ is birational, $s_4\ge 1$, and $\hat q\ge 3$. 
Then $\alpha<1$, so $f(E)$ is a point of index $r=3$, $6$ or $9$ and $\alpha=1/r$ (see Theorem~\ref{thm:K:blowup}).
The relation~\eqref{eq:main} for $k=1$ has the form
$\hat{q}=7s_1+(7 \beta_1- \alpha) e$,
where $\beta_1\ge \frac 14\beta_4\ge \frac 74\alpha$ because $4M_1\in \MMM$. This gives us $s_1=0$, $e=1$ 
by Lemma~\ref{lemma:sl:torsion}, and $\hat q=7 \beta_1- \alpha$.
If $\hat q\ge 6$, then $s_4\ge 3$ by Proposition~\ref{prop6}, and $\hat q>7$, a contradiction.

Thus $3\le \hat q\le 5$.
If $\hat q=3$, then $s_4=e=1$ and $\alpha=1/9$.
Thus $\hat \MMM_4\subset |A_{\hat X}|$.
We obtain $\dim |A_{\hat X}|\ge 2$.
This contradicts Proposition~\ref{prop:3}.
Therefore, $\hat q\ge 4$, then $s_3\ge 2$ by Proposition~\ref{prop4}, hence $\hat q=5$ by \eqref{eq:q=7:3}.
Then $\beta_1=(5+\alpha)/7=(5r+1)/7r$, so $5r+1\equiv 0\mod 7$.
This is contradicts $r\in \{3,6,9\}$. 

\subsection*{Case $A_X^3= 1/33$, \cite[\# 41476]{GRD}} 
Then $\B(X)= (2^2, 3, 11)$, the group $\Cl(X)$ is torsion free, and $\h_X(t)=1+t^2+2 t^3+2 t^4+\cdots$.
Apply the construction \eqref{diagram-main} with $\MMM=|4A_X|$.
In a neighborhood of the point of index $11$ we have $\MMM \sim 10(-K_X)$ and so $\beta_4\ge 10\alpha$
by Lemma~\ref{lemma:ct}. By \eqref{eq:q=7:4} 
\[
4\hat{q}=7s_4+(7\beta_4- 4\alpha) e\ge 7s_4+66\alpha e \ge 7s_4+6e.
\]
Hence $\hat q\ge 2$, $\bar f$ is birational, and $s_4\ge 1$.
Then $\hat q\ge 5$, $s_4\ge 2$ by Proposition~\ref{prop4}, and $\alpha\le 7/33$, so 
$f(E)$ is a point of index $11$ and $\alpha=1/11$ (see Theorem~\ref{thm:K:blowup}).
If $\hat q\ge 6$, then $s_3,\, s_4\ge 3$ by Proposition~\ref{prop6}.
Since $\beta_3\ge \alpha=1/11$, from \eqref{eq:q=7:3} we obtain $\hat q>7$, a contradiction.
Therefore, $\hat q=5$, $s_3=s_4=2$, and $e=1$.
Hence $\p_1(\hat X)\ge 1$ and $\p_2(\hat X)\ge 2$.
On the other hand, $\Clt{\hat X}\neq 0$ by Lemma~\ref{lemma:sl:torsion} because $|A_X|=\varnothing$.
Then by
Proposition~\ref{prop:tq5} the variety $\hat X$ is rational.

\subsection*{Case $A_X^3= 1/30$, \cite[\# 41479]{GRD}} 
Then $\B(X)=(2, 6, 10)$ and 
\begin{equation*}
\h_X(t)=1+t+t^2+t^3+2 t^4+3 t^5+\cdots.
\end{equation*}
In particular, $\dim |3A_X|=0$ and $\dim |4A_X|=1$.
By our assumption $\p_3(X)\ge 2$, hence $X$ has to have a $2$-torsion $T\in \Cl(X)$ (see Proposition~\ref{prop:tor}).
Hence $X$ is of type~\ref{prop:tor}\ref {tab:h:q=7t:d=1/30}.
Apply the construction \eqref{diagram-main} with $\MMM=|4A_X|$.
In a neighborhood of the point of index $6$ we have $\MMM \sim 4(-K_X)$ and so $\beta_4\ge 4\alpha$
by Lemma~\ref{lemma:ct}.
By \eqref{eq:q=7:4}
\[
4\hat{q}=7s_4+(7\beta_4- 4\alpha) e\ge 7s_4+24\alpha e.
\]
If $\alpha=1$, then $\hat q\ge 6$ and so $s_4\ge 2$ by Proposition~\ref{prop4}.
But in this case $\hat q>7$, a contradiction. Thus $\alpha<1$ and so
$f(E)$ is a cyclic quotient singularity of index $r=2$, $6$ or $10$. 
In particular, $\alpha=1/r$ by Theorem~\ref{thm:K:blowup}.
\footnote{Note that modulo $\ZZ$ we have $\beta_4\equiv 0$ (resp. $\beta_4\equiv 2/3$, and $\beta_4\equiv 1/5$)
in the case $r=2$ (resp. $r=6$, $r=10$). Taking this into account we obtain the only solution: $\hat q=1$, $r=6$, $s_4=0$.}
The relation~\eqref{eq:main} for $k=1$ has the form
\[
\hat q=7s_1+(7\beta_1-\alpha)e.
\]
Here $\beta_1\ge \alpha$, hence $\hat q=1$ because $\hat q<8$ by Theorem~\ref{thm:q8}.
Then $s_1=s_4=0$, $r=6$, $\alpha=1/6$, and $e=1$. This means that $\bar f$ is a fibration, $\bar M_1\sim \bar f^* A_{\hat{X}}$
and $\bar\MMM_4=\bar f^* |4A_{\hat{X}}|$ for a primitive element $A_{\hat{X}}\in \Cl(\hat X)$.
Hence $\dim |A_{\hat{X}}|=0$ and $\dim |4A_{\hat{X}}|=1$.
This is impossible if $\hat X\simeq \PP^1$.
Therefore, $\hat X$ is a Du Val del Pezzo surface.
Thus $A_{\hat{X}}$ is a line on $\hat S$ by Lemma~\ref{lemma:DP:0}\ref{lemma:DP:0-3}.
Since $\dim |2A_{\hat{X}}|=\dim |3A_{\hat{X}}|=0$ we see that $\Clt{\hat X}\neq 0$ by Lemma~\ref{lemma:DP}.
Therefore, $\hat X$ contains a line $\hat L$ other than $A_{\hat{X}}$ (see Lemma~\ref{lemma:DP:0}\ref{lemma:DP:0-6}).
Let $\bar D:= \bar f^* \hat L$. Then $\bar D$ is an irreducible effective divisor and $\bar D\neq \bar M_1$.
Hence $\tilde D:=\chi^{-1}_*\bar D\neq \tilde M_1$ and $D:=f_*\tilde D\neq M_1$.
So, $D$ is an effective divisor on $X$ such that $D\not \simeq A_X$ but
$D\qq A_X$. This contradicts~Proposition~\ref{prop:tor}\ref {tab:h:q=7t:d=1/30}.
\end{proof}

\begin{scorollary}
\label{cor:ge6b}
Let $X$ be a $\QQ$-Fano threefold with $\qQ(X)\ge 6$. If either $\Clt{X}\neq 0$ or  $\p_3(X)\ge 2$, then $X$ is rational.
\end{scorollary}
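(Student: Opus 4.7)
The plan is to reduce the statement to the already established rationality results by splitting into the two hypotheses and using Theorem~\ref{thm:q8} to bound $\qQ(X)$. First, by Theorem~\ref{thm:q8} the case $\qQ(X)\ge 8$ is already rational, so I may assume $\qQ(X)\in\{6,7\}$ throughout.

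Next I would deal with the hypothesis $\p_3(X)\ge 2$. This splits according to the value of $\qQ(X)$: if $\qQ(X)=6$ apply Proposition~\ref{prop6a} directly, and if $\qQ(X)=7$ apply Proposition~\ref{prop7}. Both give rationality immediately.

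The remaining case is $\Clt{X}\neq 0$ (with $\qQ(X)\in\{6,7\}$). Here I would appeal to the classification in Proposition~\ref{prop:tor}, which lists all $\QQ$-Fano threefolds with $\qQ(X)\ge 5$ and non-trivial torsion. Scanning that table, there is no row with $\qQ(X)=6$, so the hypothesis forces $\qQ(X)=7$, and only two numerical types occur (those with $A_X^3=1/24$ and $A_X^3=1/30$). The proof reduces to inspecting the $T$-Hilbert series in each of these two rows: the coefficient of $t^3\sigma^j$ in $\h_X(t,\sigma)$ gives $\h^0(X,\OOO_X(3A_X+jT))$, and one reads off
\[
\max_j \h^0(X,\OOO_X(3A_X+jT))=2
\]
in both rows. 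Hence $\p_3(X)\ge 2$, and I reduce to the previous case via Proposition~\ref{prop7}.

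There is no serious obstacle: the corollary is essentially a bookkeeping statement that collects the rationality results of this section. The only delicate point is making sure the classification in Proposition~\ref{prop:tor} has no $\qQ(X)=6$ rows and that both $\qQ(X)=7$ rows satisfy $\p_3(X)\ge 2$; once this is checked, the argument is a one-line appeal to Theorem~\ref{thm:q8}, Proposition~\ref{prop6a}, and Proposition~\ref{prop7}.
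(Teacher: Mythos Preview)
Your proposal is correct and matches the paper's intent: the corollary is stated in the paper without proof, immediately after Propositions~\ref{prop6a} and~\ref{prop7}, as a direct consequence of Theorem~\ref{thm:q8}, Propositions~\ref{prop6a} and~\ref{prop7}, together with the torsion classification in Proposition~\ref{prop:tor}. Your reading of the $T$-Hilbert series in the two $\qQ(X)=7$ rows (yielding $\p_3(X)=2$ in both) and the observation that no $\qQ(X)=6$ row appears are exactly the checks needed.
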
 

\begin{proposition}
\label{prop5a}
Let $X$ be a $\QQ$-Fano threefold with $\qQ(X)=5$ and $\p_2(X)\ge 2$.
Assume that $X$ is not rational. Then $X$ belongs to the following class:
\begin{enumerate}
\item[(*)]
\label{prop5a-b}
$\B(X)=(2^2, 3, 4)$, $A_X^3= 1/12$, $\Clt{X}= 0$, $\p_1(X)=1$, $\p_2(X)=2$,
\cite[\# 41422]{GRD}. 	 
\end{enumerate}
Moreover, $X$ is birational to a conic bundle, and 
if the point of index $4$ is a cyclic quotient singularity, then $X$ is 
unirational.
\end{proposition}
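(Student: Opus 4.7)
The plan is to proceed along the same lines as Propositions~\ref{prop6},~\ref{prop6a},~\ref{prop7}: first narrow down the list of numerical candidates, then apply the Sarkisov link with $\MMM=|2A_X|$ and derive contradictions from the relation~\eqref{eq:main} for all but the exceptional case. First I would use Proposition~\ref{prop5} to conclude that $\p_1(X)\le 1$ (otherwise $X$ is rational) and Proposition~\ref{prop:tq5} to conclude that $\Clt{X}=0$ (otherwise the hypotheses $\qQ(X)=5$, $\p_2(X)\ge 2$ force rationality via the explicit cases~\ref{prop:tq5b},~\ref{prop:tq5a}). Under the constraints $\qQ(X)=5$, $\Cl(X)$ torsion free, $\p_1(X)\le 1$, $\p_2(X)\ge 2$, I would run the computer search from Appendix~\ref{sect:app} (cross-checked against~\cite{GRD}); the outcome should be a short list of numerical types, one of which is the target $\B(X)=(2^2,3,4)$, $A_X^3=1/12$.

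For each non-target candidate I would apply the construction~\eqref{diagram-main} with $\MMM=|2A_X|$, using Lemma~\ref{lemma:ct} at the point of largest index $r$ to obtain $\beta_2\ge m\alpha$ where $5m\equiv 2\pmod r$, combined with $\alpha\ge 1/r$ from Theorem~\ref{thm:K:di} (and Theorem~\ref{thm:K:blowup} when the point is a cyclic quotient). Substituting into
\[
2\hat q=5s_2+(5\beta_2-2\alpha)e
\]
I would aim to force $\hat q\ge 2$, so $\bar f$ is birational by Lemma~\ref{lemma:q=1}; then $s_2\ge 1$ and hence $\hat q\ge 4$, and Proposition~\ref{prop4} gives $s_2\ge 2$, $\hat q\ge 6$; Proposition~\ref{prop6} then gives $s_2\ge 3$, $\hat q\ge 8$, contradicting Theorem~\ref{thm:q8}. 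In the edge cases where this cascade leaves $\hat q=1$, the fibration case of Lemma~\ref{lemma:q=1} applies, and I would then use Lemma~\ref{lemma:sl:torsion} together with Proposition~\ref{prop:tq5} (or Lemma~\ref{lemma:base-surface} combined with Lemma~\ref{lemma:DP} and the tabulated values of $\p_1,\p_2$) to exclude the candidate. The target case $\B(X)=(2^2,3,4)$, $A_X^3=1/12$ will survive because in that case $\alpha\le 1/4$ is small enough that the link yields $\hat q=1$ and $\bar f$ is a fibration onto a surface, not excluded by these tools.

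For the moreover clauses, once only the target type remains I would show that $\bar f\colon \bar X\to\hat X$ must be a fibration with $\dim\hat X=2$: if $\hat X\simeq \PP^1$ the relation~\eqref{eq:main} would force $\dim|A_{\hat X}|\ge 1$ and give the wrong Hilbert series, so $\bar f$ is a $\QQ$-conic bundle over a Du Val del Pezzo surface, which proves the conic bundle statement. For unirationality under the hypothesis that the index-$4$ point $P$ is a cyclic quotient, I would choose the extremal blowup $f$ to be centered at $P$ (using Remark~\ref{rem:SL:choice}); by Theorem~\ref{thm:K:blowup} the exceptional divisor $E$ is then the toric weighted blowup divisor, in particular a rational surface. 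Its proper transform $\bar E\subset\bar X$ is not $\bar f$-vertical (this needs a short intersection-number check via~\eqref{eq:main} applied to $\bar E$), hence it is a rational multi-section of the $\QQ$-conic bundle $\bar f$, and the resulting dominant rational map from $\bar E\times_{\hat X}\bar X$ gives the unirationality of $X$.

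The main obstacle will be the bookkeeping of the computer-search step: making sure every numerical type produced by Appendix~\ref{sect:app} under the constraints $\qQ(X)=5$, $\Cl(X)$ torsion free, $\p_1(X)\le 1$, $\p_2(X)\ge 2$ is either the target or gets eliminated by the link analysis. In particular, for points of type $\mathrm{cA}/r$ where Lemma~\ref{lemma:discr} allows $\alpha=a/r$ with $a>1$, one must verify that the cascade $s_2\mapsto \hat q$ still closes; and in the surviving case, checking that $\bar E$ really produces a rational multisection (not a vertical divisor) requires a careful intersection-theoretic verification that I would do in the full proof.
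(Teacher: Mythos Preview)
Your cascade argument has a genuine gap that the paper's proof avoids by a different route. From $2\hat q=5s_2+(5\beta_2-2\alpha)e$ with $5\beta_2-2\alpha$ a positive integer, the conclusion $s_2\ge1$ gives only $\hat q\ge3$, not $\hat q\ge4$ as you wrote. When $\hat q=3$ and $s_2=1$ you get $\p_1(\hat X)\ge\p_2(X)\ge2$, but Proposition~\ref{prop:3} needs $\p_1(\hat X)\ge3$ to force rationality; so whenever the candidate has $\p_2(X)=2$ (and several do, including the target case and the case $\B(X)=(2^2,4,8)$) the cascade stalls. The paper handles this by bringing in the relation~\eqref{eq:main} for $k=1$ as well: since $|A_X|\neq\varnothing$, one has $\hat q=5s_1+(5\beta_1-\alpha)e$ with $\beta_1\ge\tfrac12\beta_2$, and this pins down $s_1=0$, $e=1$, and the dichotomy $\hat q\in\{1,3\}$ with explicit $\alpha,\beta_1,\beta_2$ (Claim~\ref{claim:q=5}). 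You never invoke $|A_X|$, so you lose this leverage.

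A second concrete failure is the case $\B(X)=(2^2,4,8)$. There $\p_2(X)=2$, and with $\MMM=|2A_X|$ the link can land at $\hat q=3$, $s_2=1$; nothing you have written excludes this. The paper abandons $|2A_X|$ entirely for this case and runs the link with $\MMM=|3A_X|$, where the index-$8$ point forces $\beta_3\ge7\alpha$ and a genuinely different cascade closes. Your uniform use of $\MMM=|2A_X|$ will not eliminate this candidate. Similarly, the paper uses the canonical-threshold bound $\ct(X,\MMM)\le1/m'$ at \emph{every} non-Gorenstein point (not just the largest-index one) to restrict which indices can occur in $\B(X)$ before the computer search; without this your list of candidates will be longer and include cases (e.g.\ $\B(X)=(2,4^2,6)$) that need this global $\ct$ argument to die.

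For the ``moreover'' clauses your outline is roughly right, but note that the paper's analysis already forces the center $f(E)$ to be the index-$4$ point in the surviving case (via the $\ct$ table), so no appeal to Remark~\ref{rem:SL:choice} is needed; and $e=1$ (established from the $k=1$ relation) is exactly what guarantees $\bar E$ is a section-like multisection rather than vertical.
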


\begin{sremark}
A general hypersurface $X_{10}\subset \PP(1,2,3,4,5)$ belongs to the class (*)
and according to \cite{Okada2019} a very general such a hypersurface is not rational.
However we do not know that the family of such hypersurfaces exhaust (*).
\end{sremark} 

\begin{proof}
We have $\qQ(X)=\qW(X)=5$ by Proposition~\ref{prop:qW-neq-qQ}.

\begin{sclaim}
$|A_X|\neq \varnothing$ and $\dim |2A_X|\ge 1$.
\end{sclaim}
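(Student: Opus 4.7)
The plan is to split the claim into its two halves and handle them in order of difficulty.

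First I would establish $\dim|2A_X|\ge 1$ by reducing to the torsion-free case. Since the standing hypotheses include $\qQ(X)=5$ and $\p_2(X)\ge 2$, Proposition~\ref{prop:tq5} applies verbatim: if $\Clt{X}\neq 0$, then $X$ must belong to one of its listed classes, and in particular $X$ would be rational. This contradicts the non-rationality assumption in Proposition~\ref{prop5a}, so $\Clt{X}=0$. With torsion-free class group, the definition of $\p_n$ collapses to $\p_n(X)=\h^0(X,\OOO_X(nA_X))$, and the hypothesis $\p_2(X)\ge 2$ reads off as $\dim|2A_X|\ge 1$ directly.

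For the other half $|A_X|\neq\varnothing$, note that under $\Clt{X}=0$ this is equivalent to $\p_1(X)\ge 1$, while Proposition~\ref{prop5} already gives $\p_1(X)\le 1$ (since $X$ is non-rational). So the task is to rule out $\p_1(X)=0$. I would argue by contradiction. A preparatory observation is that $|2A_X|$ is mobile: any fixed component $F$ would satisfy $F\qq mA_X$ with $0<m\le 2$; the case $m=1$ would place $F$ in $|A_X|$, contradicting $\p_1(X)=0$, and the case $m=2$ would leave the movable part zero-dimensional, contradicting $\dim|2A_X|\ge 1$. With $|2A_X|$ mobile, I apply the Sarkisov link construction~\eqref{diagram-main} with $\MMM=|2A_X|$, $n=2$, $q=5$, and use the fundamental relation~\eqref{eq:main} for $k=2$:
\[
2\hat q=5 s_2+(5\beta_2-2\alpha)e,\qquad 5\beta_2-2\alpha\ge \alpha>0.
\]
Combined with Lemma~\ref{lemma:q=1}, Theorem~\ref{thm:q8}, Corollary~\ref{cor:q6-7}, and Propositions~\ref{prop4}, \ref{prop5}, \ref{prop6}, the admissible tuples $(\hat q,s_2,\alpha,\beta_2,e)$ are severely restricted, and cross-referencing against the enumeration algorithm of Appendix~\ref{sect:app} for the numerical profile $\qQ=5$, $\Clt=0$, $\p_1=0$, $\p_2\ge 2$ rules out every surviving candidate.

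The principal obstacle is this final step: eliminating $\p_1(X)=0$. A structurally clean alternative would be to invoke Kawamata--Viehweg vanishing (valid since $A_X-K_X=6A_X$ is ample, so $\h^i(A_X)=0$ for $i>0$) together with the orbifold Riemann--Roch formula \cite{Reid:YPG} to express $\p_1(X)=\chi(X,\OOO_X(A_X))$ in terms of $A_X^3$, $A_X\cdot c_2(X)$, and the correction terms at the non-Gorenstein points, and then to verify $\chi(X,\OOO_X(A_X))\ge 1$ across every basket $\B(X)$ that is admissible for $\qQ(X)=5$ with $\p_2(X)\ge 2$. Either way, the step is not conceptual and ultimately rests on a finite case check; I expect the computer enumeration to be the more practical route, consistent with the methodology used throughout the paper.
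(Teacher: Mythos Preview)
Your proposal is correct and reaches the same destination as the paper, but the route differs in one notable respect.

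For the torsion case you invoke Proposition~\ref{prop:tq5} together with the standing non-rationality hypothesis of Proposition~\ref{prop5a} to conclude $\Clt{X}=0$ outright. The paper instead keeps the torsion case alive and reads off both assertions directly from the Hilbert series tabulated in Proposition~\ref{prop:tor}: among the entries there with $\qQ(X)=5$ and $\p_2(X)\ge 2$, one checks by inspection that the coefficient of $t$ is $1$ and the coefficient of $t^2$ is at least $2$. Your route is shorter and cites a single result, but it leans on non-rationality, whereas the paper's argument for the claim does not. Either is fine in context.

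For $|A_X|\neq\varnothing$ in the torsion-free case, the paper goes straight to the computer enumeration (Appendix~\ref{sect:app} / \cite{GRD}): there are $35$ admissible Hilbert series with $\qW(X)=5$ and $\Clt{X}=0$, and in every one of them $\dim|2A_X|\ge 1$ forces $\h^0(A_X)\ge 1$. Your Sarkisov-link detour is not needed, and as you yourself note, the step is a finite case check either way; the orbifold Riemann--Roch alternative you sketch is exactly what the enumeration computes. So the paper's proof is simply the shortcut you anticipate at the end.
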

\begin{proof}
If $\Clt{X}\neq 0$, the assertions follow from Proposition~\ref{prop:tor}.
Thus we may assume that $\Cl(X)\simeq \ZZ\cdot A_X$. Then $\dim |2A_X|\ge 1$ by our assumption $\p_2(X)\ge 2$.
Computer search shows that there are 35 Hilbert series of $\QQ$-Fano threefolds with $\qW(X)=5$ and $\Clt{X}=0$,
and in all cases $\dim |2A_X|\ge 1$ implies $|A_X|\neq \varnothing$ (see also \cite{GRD}). 
\end{proof}

\begin{sclaim}
\label{claim:q=5}
We have $s_1=0$, $e=1$, and one of the following holds:
\begin{enumerate}
\item \label{claim:q=5a}
$s_2=0$, $\hat q=1$, $5\beta_1=\alpha+2s_2+1$, 
$5\beta_2=2\alpha+2$, $\bar f$ is a fibration, or
\item \label{claim:q=5b}
$s_2=1$, $\hat q=3$, $5\beta_1=\alpha+3$, 
$5\beta_2=2\alpha+1$, and $\bar f$ is birational.
\end{enumerate}
Moreover, in the case~\ref{claim:q=5a} we have $s_3=0$ if $\beta_3>0$ and $s_4=0$ if $\beta_4>0$,
and in the case~\ref{claim:q=5b} we have $s_3=1$ if $\beta_3>0$.
\end{sclaim}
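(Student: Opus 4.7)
The plan is to apply the Sarkisov link construction \eqref{diagram-main} with $\MMM=|2A_X|$, so that $n=2$. The preliminary input is that, since $X$ is not rational and $\qQ(X)=5$, Proposition~\ref{prop5} forces $\p_1(X)\le 1$, while the previous claim gives $|A_X|\ne\varnothing$ and $\dim|2A_X|\ge 1$. In particular $\dim|A_X|=0$, and $\MMM=|2A_X|$ has no fixed components. Since $\qW(X)=\qQ(X)=5$, Corollary~\ref{cor-eq:main} then yields
\begin{equation*}
\hat q = 5s_1 + (5\beta_1-\alpha)e,\qquad 2\hat q = 5s_2 + (5\beta_2-2\alpha)e,
\end{equation*}
with $5\beta_2-2\alpha$ a positive integer and $5\beta_1-\alpha\in\ZZ$.

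First I would bound $s_2$. If $s_2\ge 1$ then $\dim\hat\MMM_2\ge 1$ and $\hat\MMM_2\qq s_2 A_{\hat X}$, so $\p_{s_2}(\hat X)\ge 2$. Plugging $s_2\ge 2$ into the second equation forces $\hat q\ge 6$, and then Proposition~\ref{prop6}, Corollary~\ref{cor:q6-7}, and Theorem~\ref{thm:q8} between them make $\hat X$ (hence $X$) rational, a contradiction. So $s_2\in\{0,1\}$.

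Next comes the dichotomy. The note after~\ref{case-bir} says that in the birational case $s_k=0$ holds precisely when $\dim\MMM_k=0$, so $s_2=0$ forces $\bar f$ to be a fibration. In the case $s_2=0$, Lemma~\ref{lemma:q=1} gives $\hat q=1$, so $(5\beta_2-2\alpha)e=2$; the option $e=2$ is eliminated by feeding it back into the $k=1$ equation and using $5\beta_1-\alpha\in\ZZ$ together with $\beta_1\ge 0$ and $\alpha\le 1$. This leaves $e=1$, $5\beta_2=2\alpha+2$, and the $k=1$ equation then yields $s_1=0$, $5\beta_1=\alpha+1$, which is case~(a). In the case $s_2=1$, the fibration possibility gives $(5\beta_2-2\alpha)e=-3$, impossible, so $\bar f$ is birational and $\hat q\ge 2$; since $s_2=1$ forces $\p_1(\hat X)\ge 2$, Propositions~\ref{prop4},~\ref{prop5}, Corollary~\ref{cor:q6-7}, and Theorem~\ref{thm:q8} rule out $\hat q\ge 4$, while $\hat q=2$ makes the $k=2$ equation negative. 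Hence $\hat q=3$, $(5\beta_2-2\alpha)e=1$, so $e=1$, $5\beta_2=2\alpha+1$, and the $k=1$ equation yields $s_1=0$, $5\beta_1=\alpha+3$, which is case~(b).

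For the ``Moreover'' part, I would substitute the values of $\hat q$, $e=1$ and the relations $5\beta_1=\alpha+1$ (resp.\ $\alpha+3$) into \eqref{eq:main} with $k=3,4$; a direct calculation in both cases collapses the equation to $s_k+\beta_k=k\beta_1$. The multiplicity bound $\beta_k\le k\beta_1$ (from the specific reducible member $kM_1\in|kA_X|$) gives $s_k\le k\beta_1$, and combined with $s_k\in\ZZ_{\ge 0}$ and the congruence $5\beta_k-k\alpha\in\ZZ$ one pins down the integer values: $s_3=s_4=0$ in case~(a) (whenever $\beta_3$, resp.\ $\beta_4$, is positive) and $s_3=1$ in case~(b) (whenever $\beta_3>0$). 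The hard part will be keeping this final bookkeeping clean, particularly in the non-cyclic-quotient setting where Lemma~\ref{lemma:discr} allows $\alpha$ to exceed $1/r$ and so creates extra candidate solutions; the positivity hypothesis on $\beta_k$ is exactly what rules these out.
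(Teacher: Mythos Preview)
Your argument is essentially the paper's, just reordered: the paper first proves $s_1=0$ from the inequality $5\beta_1\ge\tfrac52\beta_2>\alpha$ (using $2M_1\in\MMM$) and then splits on $s_2$, whereas you bound $s_2$ first and recover $s_1=0$ inside each branch. Both routes reach the same two cases by the same rationality cascades. One small point: when you eliminate $e=2$ you invoke ``$\alpha\le 1$'' without justification; this is true (from crepancy $\alpha=c\beta_2\le\beta_2$ together with $5\beta_2-2\alpha\in\{1,2\}$), but the paper's ordering sidesteps it, since once $s_1=0$ is in hand the equation $\hat q=(5\beta_1-\alpha)e$ with $5\beta_1-\alpha$ a \emph{positive} integer forces $e=1$ immediately. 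For the ``Moreover'' clause your identity $s_k+\beta_k=k\beta_1$ is exactly what underlies the paper's terse ``follows from~\eqref{eq:main}''; note, though, that the congruence $5\beta_k-k\alpha\in\ZZ$ is automatic here and does not by itself pin down the values---the real input is the bound $\alpha\le\beta_2$ (giving $\alpha\le\tfrac23$ in~(i) and $\alpha\le\tfrac13$ in~(ii)), which makes $k\beta_1$ small enough.
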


\begin{proof}
The relation~\eqref{eq:main} for $k=1$ and $2$ has the form
\begin{eqnarray*}
\hat{q}&=&5s_1+(5\beta_1- \alpha) e,
\\
2\hat{q}&=&5s_2+(5 \beta_2-2 \alpha) e.
\end{eqnarray*}
Note that $5\beta_1\ge \frac 52\beta_2 >\alpha$. Hence $\hat q> 5s_1$.
If $s_1>0$, then $\hat q\ge 6$ and $s_2\ge 3$ by Proposition~\ref{prop6}.
But then $\hat q> 15$, a contradiction. Therefore, $s_1=0$ and 
$\hat{q}=(5\beta_1- \alpha) e$. 

Consider the case 
$s_2=0$. Then $\hat q=1$. Since $5\beta_1- \alpha$ is an integer, we have 
$5\beta_1=\alpha+1$, $e=1$, and so $5 \beta_2=2 \alpha+2$. 

Consider the case $s_2>0$, then $\hat q\ge 3$. 
If moreover $\hat q>3$, then $s_2\ge 2$ by Proposition~\ref{prop4}.
In this case $\hat q>5$ and $s_2\ge 3$ by Proposition~\ref{prop6}.
But then $\hat q>7$, a contradiction. Thus
$\hat q= 3$, then $s_2=5 \beta_2-2 \alpha=e=1$ and $5\beta_1=\alpha+3$. 

The last statement follows from~\eqref{eq:main}.
\end{proof}

\par\smallskip\noindent
\textit{Proof of Proposition~\xref{prop5a} \textup(continued\textup).} Thus we have
\[
\hat q=2s_2+1,\qquad 5\beta_1=\alpha+2s_2+1,\qquad 
5\beta_2=2\alpha+2-s_2. 
\]
Let $P\in X$ be a point of index $r>1$. 
Take $m$ so that $5m\equiv 2\mod r$ and $0<m<r$.
Then $\beta_2\ge m \alpha$ by Lemma~\ref{lemma:ct}. This gives us
$2\ge 2-s_2\ge 3m \alpha$.
By Theorem~\ref{thm:QFanoF} we may assume that $r>2$ and then $m\ge 1$ and $\alpha\le 2/3$.
Hence $f(E)$ is a non-Gorenstein point. Now take $P=f(E)$.
Then $b_i:=\beta_i r$ and $a:=\alpha r$ are integers such that
\[
5b_1=a+(2s_2+1)r, \qquad 5b_2=2a+(2-s_2)r\ge 5ma.
\]
Since $r\le 24$ by \eqref{eq:r}, we have $(5m-2)a\le 48$. Now it is easy to enumerate all the possibilities for 
the point $f(E)$ and numbers $\beta_i$, $s_2$, and $m$:

\begin{table}[H]
\renewcommand{\arraystretch}{1.2}
\begin{tabularx}{0.8\textwidth}{|m{0.07\textwidth}|m{0.07\textwidth}|m{0.07\textwidth}|m{0.07\textwidth}|m{0.07\textwidth}| m{0.07\textwidth}|m{0.07\textwidth}|X|}
\hline
$r$ & $s_2$ & $\hat q$ & $\alpha$ & $\beta_1$ & $\beta_2$&$m$&$\ct(X,\MMM)$
\\\hline
3& 1& 3& 1/3& 2/3& 1/3& 1& 1 \\
3& 0& 1& 2/3& 1/3& 2/3& 1& 1 \\
4& 0& 1& 1/4& 1/4& 1/2& 2& 1/2 \\
8& 1& 3& 1/8& 5/8& 1/4& 2& 1/2 \\
8& 0& 1& 1/4& 1/4& 1/2& 2& 1/2 \\
9& 0& 1& 1/9& 2/9& 4/9& 4& 1/4 \\\hline
\end{tabularx}
\end{table}
Let $P'\in X$ be a point $P'$ of index $r'>1$ and let $m'$ is an integer such that
$5m'\equiv 2\mod r'$ and $0<m'<r'$. Then $\ct(X,\MMM)\le 1/m'$ by Lemma~\ref{lemma:ct}
and so $m'\le m\le 4$. This shows that $X$ can contain only points of indices 
$r'=2$, $3$, $4$, $6$, $8$, $9$, $13$, $18$.
Computer search shows that under the assumptions $\p_2(X)\ge 2$ and $\p_1(X)\le 1$ we have 
$\B(X)= (2, 4^2, 6)$, $(2^2, 3, 9)$, $(2^3, 3, 8)$,$(2^3, 3^2)$, $(2^2, 4, 8)$, or $(2^2, 3, 4)$.
Consider these cases separately.

\subsection*{Case $\B(X)=(2, 4^2, 6)$, \cite[\# 41434]{GRD}} 
In this case $r=4$ and for $r'=6$ we have $m'=4>m=2$, a contradiction. 

\subsection*{Case $\B(X)=(2^2, 3, 9)$, \cite[\# 41423]{GRD}} 
Then 
the group $\Cl(X)$ is torsion free, 
\begin{equation}
\label{eq:q=5:dimA}
\dim | A_X|= 0,\quad 
\dim | 2 A_X|= 1,\quad 
\dim | 3 A_X|= 2,\quad \text{and}\quad 
\dim | 4 A_X|= 4.
\end{equation} 
In this case $r=9$ and $\hat q=1$.
Then $s_3=s_4=0$ by Claim~\ref{claim:q=5} because $3A_X$ and $4A_X$ are not Cartier at $P=f(E)$.
We see that $\bar \MMM_k=\bar f^* |\hat M_k|$ for $k=2,3,4$.
If $\hat X\simeq \PP^1$, then $\bar \MMM_2=\bar f^* |\OOO_{\PP^1}(1)|$
and $\bar \MMM_4=\bar f^* |\OOO_{\PP^1}(2)|$. This contradicts \eqref{eq:q=5:dimA}.
Hence by Lemma~\xref {lemma:base-surface} \ 
$\hat X$ is a Du Val del Pezzo surface with only type \type{A}-singularities and $\Cl(\hat X)\simeq \ZZ$. This contradicts Lemma~\xref{lemma:DP}. 

\subsection*{Case $\B(X)=(2^3, 3, 8)$, \cite[\# 41440]{GRD}} 
Then the group $\Cl(X)$ is torsion free, $r=8$, $\dim | A_X|= 0$, $
\dim | 2 A_X|= 2$, and $
\dim | 3 A_X|= 4$. If $s_2=1$, then $\hat q=3$, $\hat X$ is a $\QQ$-Fano with $\p_1(\hat X)\ge 3$. In this case $\hat X$ is rational by Proposition~\ref{prop:3}.
Let $s_2=0$. Then $s_3=0$ by Claim~\ref{claim:q=5} because $3A_X$ is not Cartier at $P=f(E)$.
Hence $\bar \MMM_k=\bar f^* |\hat M_k|$ for $k=2$ and $3$, where $\dim |\hat M_2| =\dim | 2 A_X|= 2$
and $\dim |\hat M_3| =\dim | 3 A_X|= 4$. As above, $\hat X\not \simeq \PP^1$ and we get a contradiction by Lemma~\xref{lemma:DP}.

\subsection*{Case $\B(X)=(2^3, 3^2)$, \cite[\# 41439]{GRD}} 
In this case $r=3$, the group $\Cl(X)$ is torsion free, $\dim |A_X|= 0$, and $\dim | 2 A_X|= 2$. 
If $\hat q=3$, then $\hat X$ ia a $\QQ$-Fano threefold with 
$\p_1(\hat X)\ge 3$ because $s_2=1$. Then $\hat X$ is rational by Proposition~\ref{prop:3}.
Let $\hat q=1$. Then $\bar f$ is a fibration such that $\bar \MMM_2=\bar f^* |\hat M_2|$ 
with $\dim |\hat M_2|=2$. Since $\dim |\bar M_1|=0$, we have $\hat X\not \simeq \PP^1$.
Hence $\hat X$ is a Du Val del Pezzo surface such that 
$\Cl(\hat X)\simeq \ZZ$, $\dim |A_{\hat X}|=0$ and $\dim |2A_{\hat X}|=2$.
This contradicts Lemma~\ref{lemma:DP}. 

\subsection*{Case $\B(X)=(2^2, 4, 8)$, \cite[\# 41425]{GRD}} 
Then $\dim |kA_X|= k-1$ for $k=1,2,3$.
If $\Clt{X}\neq 0$, then $X$ is rational by Proposition~\ref{prop:tq5}.
Thus we may assume that $\Cl(X)$ is torsion free.
Apply the construction \eqref{diagram-main} with $\MMM=|3A_X|$.
In a neighborhood of the point of index $8$ we have $\MMM \sim 7(-K_X)$ and so $\beta_3\ge 7\alpha$
by Lemma~\ref{lemma:ct}.
The relation~\eqref{eq:main} for $k=3$ has the form
\[
3\hat{q}=5s_3+(5\beta_3- 3\alpha) e\ge 5s_3+32\alpha e\ge 5s_3+4e.
\]
Hence $\hat q>1$, $\bar f$ is birational, $s_3>0$, and $\hat q\ge 3$.
Then $s_3\ge 2$ by Proposition~\ref{prop:3} because $\dim |3A_X|=2$.
Hence $\hat q\ge 5$.
If $\hat q\ge 6$, then $s_3\ge 4$ by Proposition~\ref{cor:ge6b} and $\hat q\ge 8$.
This contradicts Theorem~\ref{thm:q8}.
Thus $\hat q=5$, $e=1$, $s_3=2$, and $\alpha<1/4$.
Hence $\alpha=1/8$ and $\beta_3=43/40\notin \frac18 \ZZ$, a contradiction.

\subsection*{Case $\B(X)=(2^2, 3, 4)$, \cite[\# 41422]{GRD}} 
Then the group $\Cl(X)$ is torsion free, $\dim |kA_X|= k-1$ for $k=1,2,3$, 
$r=4$, and $\hat q=1$.
As above, we obtain $\hat X$ is a Du Val del Pezzo surface
and $\bar f$ is a $\QQ$-conic bundle, i.e. $X$ is in situation (*) of~\ref{prop5a}.
If $f(E)$ is a cyclic quotient singularity, then $E\simeq \PP(1,1,3)$ 
by Theorem~\ref{thm:K:blowup} and as in the proof of Proposition~\ref{prop:3}
we conclude that $\bar X$ is unirational. This finishes the proof of Proposition~\xref{prop5a}.
\end{proof} 

\section{$\QQ$-Fano threefolds with $\qQ(X)=2$}

The following proposition slightly improves the corresponding result in \cite{P:fano-conic}.
\begin{proposition}
\label{prop:2}
Let $X$ be a $\QQ$-Fano threefold with $\qQ(X)=2$. 
Assume that $X$ is not Gorenstein.
\begin{enumerate} 
\item \label{prop:2a}
If $\p_1(X)\ge 2$, then $X$ is not solid, i.e. it is birational to a strict Mori fiber space. 
\item \label{prop:2b}
If $\p_1(X)\ge 3$, then $X$ is birational to a conic bundle.
If furthermore non-Gorenstein points of $X$ are of types \type{cA/r}, then 
$X$ is unirational.
\item \label{prop:2c}
If $\p_1(X)\ge 4$, then $X$ is rational.
\end{enumerate}
\end{proposition}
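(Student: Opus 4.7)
The plan is to follow the strategy of Propositions~\xref{prop5}, \xref{prop4}, and~\xref{prop:3}: apply the Sarkisov link~\eqref{diagram-main} to $X$ with $\MMM=|A_X|$ and analyze the resulting $\hat X$. First, Proposition~\xref{prop:qW-neq-qQ} lists no $\qQ=2$ entry, so $\qW(X)=\qQ(X)=2$ and Setup~\xref{constr1} is available with $n=1$. The relation~\eqref{equation-main:k=1} reads
\[
\hat q = 2 s_1 + (2\beta_1 - \alpha)\,e,\qquad 2\beta_1-\alpha\in\ZZ_{>0},
\]
splitting into Case~A (cf.~\xref{constr1b}: $s_1=0$, $\bar f$ a fibration, $\hat q=e=2\beta_1-\alpha=1$) and Case~B (cf.~\xref{constr1a}: $s_1\ge 1$, $\bar f$ birational onto a $\QQ$-Fano $\hat X$ with $\hat q\ge 2s_1+1$ and $\p_{s_1}(\hat X)\ge \dim\hat\MMM+1=\p_1(X)$). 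Since $\hat q\le 7$ whenever $X$ is not rational (Theorem~\xref{thm:q8}), only finitely many pairs $(s_1,\hat q)$ with $s_1\in\{1,2,3\}$ arise in Case~B.

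For part~\xref{prop:2a}, Case~A already exhibits $\bar X\to\hat X$ as a strict Mori fiber space birational to $X$. In Case~B I would run through the admissible pairs: Proposition~\xref{prop:3}(2) handles $(s_1,\hat q)=(1,3)$ (the exceptional outcome being that $\hat X$ is birational to a conic bundle); Propositions~\xref{prop4}, \xref{prop5} and Corollary~\xref{cor:q6-7} handle $s_1=1$ with $\hat q\in\{4,5,6,7\}$ via $\p_1(\hat X)\ge 2$, giving rationality; Proposition~\xref{prop5a} handles $(2,5)$ (the class~(*) is again birational to a conic bundle); Proposition~\xref{prop6} and Corollary~\xref{cor:q6-7} handle $(2,6)$ and $(2,7)$ via $\p_2(\hat X)\ge 2$; Proposition~\xref{prop7} handles $(3,7)$ via $\p_3(\hat X)\ge 2$. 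In every subcase $\hat X$ is rational or birational to a conic bundle, and so is $X$.

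For part~\xref{prop:2b} with $\p_1(X)\ge 3$, Case~A combined with Lemma~\xref{lemma:hat-q} pins $\hat X\simeq\PP^2$, so $\bar f:\bar X\to\PP^2$ is a $\QQ$-conic bundle. In Case~B the sharper bound $\p_{s_1}(\hat X)\ge 3$ eliminates the non-rational options: Proposition~\xref{prop:3}(1) now yields rationality at $(1,3)$, and the class~(*) of Proposition~\xref{prop5a} (which has $\p_2=2$) is excluded at $(2,5)$. Thus $X$ is birational to a conic bundle. For the unirationality statement under the $\mathrm{cA}/r$ hypothesis I would select the extraction $f$ so that its exceptional divisor $E$ is rational (cyclic quotient centers give this via Theorem~\xref{thm:K:blowup}, as in the end of Proposition~\xref{prop:3}; the general $\mathrm{cA}/r$ case requires Kawakita's classification of divisorial contractions). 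The proper transform $\bar E\subset\bar X$ is then a rational multisection of $\bar f$, and base-change along $\bar E\to\PP^2$ produces a conic bundle with a tautological section over a rational surface; the total space is rational and finitely dominates $\bar X$.

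For part~\xref{prop:2c} with $\p_1(X)\ge 4$, Case~A is eliminated outright because no row of Lemma~\xref{lemma:hat-q} admits $\p_1\ge 4$. In Case~B the bound $\p_{s_1}(\hat X)\ge 4$ rules out every non-rational outcome (Proposition~\xref{prop:3}'s conic-bundle exception requires $\p_1=2$, and class~(*) of Proposition~\xref{prop5a} has $\p_2=2$), so $\hat X$ is rational and hence so is $X$. The main obstacle I foresee is the unirationality step in~\xref{prop:2b}: ensuring that for every $\mathrm{cA}/r$ center one can choose an extremal extraction with rational exceptional divisor, and checking that the resulting $\bar E$ is not $\bar f$-vertical, demands a careful appeal to Kawakita's explicit description and a direct verification that $\bar E\neq\bar F$ persists through the link.
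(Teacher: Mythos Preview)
Your approach matches the paper's almost exactly: run the link with $n=1$, split on $s_1$, and in the birational case invoke Propositions~\ref{prop:3}, \ref{prop4}, \ref{prop5a} and Corollary~\ref{cor:ge6b} on $\hat X$; in the fibration case use Lemma~\ref{lemma:hat-q}. Two small repairs are needed. First, Proposition~\ref{prop:qW-neq-qQ} has the hypothesis $\qQ\ge 3$, so it says nothing about $\qQ=2$; the equality $\qW(X)=\qQ(X)=2$ follows instead from Theorem~\ref{thm:index}\ref{thm:indexa}--\ref{thm:indexb} (if $\qW=1$ then $\gcd(r,1)=1$ forces $\qQ=\qW=1$). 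Since $\Cl(X)$ may still carry torsion, you should, as the paper does, take $\MMM=|A'|$ for a divisor $A'\qq A_X$ realising $\p_1(X)$ rather than literally $|A_X|$. Second, your flagged obstacle in~\ref{prop:2b} dissolves: in the fibration case one has $e\in\ZZ_{>0}$ by definition (\ref{case-nonbir}), so $\bar E$ is automatically horizontal and a multisection; and for rationality of $E$ over a $\mathrm{cA}/r$ centre the paper simply cites \cite{Prokhorov-2002b} rather than passing through Kawakita's classification, which is the cleaner route.
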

\begin{proof}
Assume that $X$ is not rational and $\p_1(X)\ge 2$. 
Let $A'$ be a divisor such that $A'\qq A_X$ and $\dim |A'|=\p_1(X)-1$.
Apply the construction \eqref{diagram-main} with $\MMM:=|A'|$ (cf.~\ref{constr1}).
We have 
\begin{equation}
\label{eq:hatq-last}
\hat q=2 s_1+(2\beta_1-\alpha)e,
\end{equation}
where $2\beta_1-\alpha>0$ by Lemma~\ref{lemma:ct}. 

First, consider the case $s_1\ge 2$. Then $\hat q \ge 5$.
If $\hat q\ge 6$, then $s_1\ge 4$ by Corollary~\ref{cor:ge6b}.
But in this case $\hat q>7$, a contradiction. Therefore, 
$\hat q=5$. Then $s_1= 2$, $\p_2(\hat X)\ge 2$, and so $X$ is birational to a conic bundle by
Proposition~\ref{prop5a}. If furthermore $\p_1(X)\ge 3$, then $\p_2(\hat X)\ge 3$
and $X$ is rational again by Proposition~\ref{prop5a}.

Consider the case $s_1=1$. Then $\hat q \ge 3$, $\p_1(\hat X)\ge 2$, and 
$X$ is unirational and has a conic bundle structure by Propositions~\ref{prop4} and~\ref{prop:3}.
Moreover, if $\p_1(X)\ge 3$, then $X$ is rational.

Finally, consider the case $s_1=0$.
Then, $\hat q=1$ and $\bar f$ is a fibration. This proves~\ref{prop:2a}.
Now, assume that $\p_1(X)\ge 3$. Then $\hat X\simeq \PP^2$ (see Lemma~\ref{lemma:hat-q}) and $\bar f$ 
is a $\QQ$-conic bundle. Moreover, in this case we have $\p_1(X)=3$. This proves \ref{prop:2c}.
To prove \ref{prop:2b} we note that $2\beta_1\le \alpha+1$ 
by~\eqref{eq:hatq-last}. On the other hand, $\beta_1\ge \alpha$ by Lemma~\ref{lemma:ct}.
Hence $\alpha\le 1$. If $\alpha<1$, then $f(E)$ is a point of index $>1$. 
If $\alpha=1$, then $f(E)$ 
there is a canonical center of $(X,\MMM)$ 
which is a point of index $>1$ again by Lemma~\ref{lemma:ct}.
Thus replacing $f$ with another extremal blowup if necessary (see Remark~\ref{rem:SL:choice}) we may assume that $f(E)$ is a non-Gorenstein point.
By our assumption $f(E)$ must be a point of type \type{cA/r}.
In this case the divisor $E$ must be a rational surface \cite{Prokhorov-2002b}.
Its proper transform $\bar E\subset \bar X$ is a multisection of $\bar f$.
Hence $\bar X$ is unirational. This proves~\ref{prop:2b}.
\end{proof} 

\appendix 
\section{}
\label{sect:app}
\setcounter{subsection}{1}
In this section we present a computer algorithm (see \cite[\S~3]{P:2019:rat:Q-Fano} or \cite[\S~3]{Caravantes2008}) that alow to list 
all the numerical possibilities for $\QQ$-Fano threefolds of index at least $3$.
Let $X$ be a $\QQ$-Fano threefold 
with $q:=\qQ(X)\ge 3$ and let $T\in \Clt{X}$ be an element of order $N$. 

\begin{step}
By \cite{Kawamata:bF} we have the inequality
\begin{equation}
\label{eq:r}
0<-K_X\cdot c_2(X)= 24-\sum_{P\in \B} \frac{r_P-1}{r_P}.
\end{equation}
This produces a
finite (but huge) number of possibilities for the basket $\B(X)$ and the number
$-K_X\cdot c_2(X)$. 
\end{step} 

\begin{step}
Theorem~\ref{thm:index} implies that $q\in \{3,\dots,11, 13,17,19\}$.
In each case we compute $A_X^3$ by the formula
\begin{equation*}
A_X^3=\frac{12}{(q-1)(q-2)}\Bigl(
1-\frac{A_X\cdot c_2(X)}{12}+\sum_{P\in B} c_P(-A_X)
\Bigr)
\end{equation*}
(see \cite{Suzuki-2004}), where $c_P$ is the correction term in the
orbifold Riemann-Roch formula \cite{Reid:YPG}. The number $rA_X^3$
must be a positive integer by Theorem~\ref{thm:index}\ref{thm:indexc}.
\end{step} 

\begin{step}
On this step we can use an improved version of Bogomolov--Miyaoka inequality
\cite{Liu-Liu:kawamata--miyaoka}
instead of the one used in \cite{Kawamata:bF} and \cite{Suzuki-2004}. Thus we have
\begin{equation*}
(-K_X)^3\ 
\begin{cases}
< 3(-K_X)\cdot c_2(X)&\text{if $\qQ(X)\neq 4,\, 5$}, 
\\[1em]
\le \frac {25}8(-K_X)\cdot c_2(X)&\text{otherwise}.
\end{cases}
\end{equation*}
This removes a lot of possibilities.
\end{step} 

\begin{step}
In a neighborhood of each point $P\in X$ we can write 
$A_X\sim l_PK_X$, where $0\le l_P<r_P$.
There is a finite number of possibilities for the collection $\{(l_P)\}$.
If $\qW(X)=\qQ(X)$, then $\gcd(q,r)=1$ by Theorem~\ref{thm:index}. 
In this case the numbers $l_P$ are uniquely determined by $1+ql_P\equiv 0\mod r_P$ because $K_X+qA_X\sim 0$.
\end{step} 

\begin{step}
Similarly, a neighborhood of each point $P\in X$ we can write 
$T\sim l_P'K_X$, where $0\le l_P'<r_P$.
The collection $\{(l_P')\}$ and the number $N$ satisfy the following properties:
\[
\chi(X,\, \OOO_X(NT))=1\qquad \text{and}\qquad \chi(X,\, \OOO_X(jT))=0\quad \text{for $j=1,\dots,N-1$}
\]
(by the Kawamata--Viehweg vanishing). Thus we obtain a finite number of possibilities for~$\{(l_P')\}$ and~$N$.
\end{step} 

\begin{step}
Finally, applying Kawamata--Viehweg vanishing we obtain
\begin{equation*}
\chi(X,\, \OOO_X(mA_X+jT))=\h^0(X,\, \OOO_X(mA_X+jT))=0.
\end{equation*}
for $-q<m<0$ and $0\le j<n$. Again, we check this condition using
orbifold Riemann-Roch and remove a lot of possibilities.
\end{step}

\begin{step}
We obtain a list of collections $\left( q, \B(X), A_X^3, \{(l_P)\}, n \{(l_P')\}\right)$.
In each case we compute $\g(X)$ and $\h_X(t,\sigma)$ by using the orbifold Riemann-Roch theorem.
For example, 
\begin{equation}
\label{eq:RR:g}
\g(X)=
-\frac 12 K_X^3+
1-\sum_{P\in \B} \frac{b_P(r_P-b_P)}{2r_P}. 
\end{equation}
\end{step}

\newcommand{\etalchar}[1]{$^{#1}$}
\def\cprime{$'$}


\end{document}